\documentclass[11pt]{amsart}

\usepackage[T1]{fontenc}
\usepackage{lmodern}
\usepackage[a4paper,inner=3cm,outer=3cm,top=4cm,bottom=4cm,pdftex]{geometry}

\usepackage{mathtools}
\usepackage{amssymb}
\usepackage{mathrsfs}
\usepackage{enumitem}

\usepackage{tikz}
\usetikzlibrary{calc,arrows.meta}

\usepackage[
  colorlinks=false,
  linkbordercolor={0 1 0},
  citebordercolor={1 0 0},
  urlbordercolor={1 0 0}
]{hyperref}

\numberwithin{equation}{section}

\newtheorem{theorem}{Theorem}[section]
\newtheorem{proposition}[theorem]{Proposition}
\newtheorem{lemma}[theorem]{Lemma}
\newtheorem{corollary}[theorem]{Corollary}

\theoremstyle{definition}
\newtheorem{definition}[theorem]{Definition}
\newtheorem{remark}[theorem]{Remark}

\newcommand{\eps}{\varepsilon}

\newcommand{\C}{\mathbb C}

\newcommand{\R}{\mathbb R}
\newcommand{\Z}{\mathbb Z}
\newcommand{\E}{\mathbb E}
\newcommand{\1}{\mathbf 1}
\renewcommand{\mod}[1]{\,(\mathrm{mod}\,#1)}
\newcommand{\Pcal}{\mathcal P}

\DeclareMathOperator{\Poly}{Poly}
\DeclareMathOperator{\TV}{TV}
\DeclareMathOperator{\vol}{vol}

\newcommand{\norm}[1]{\left\lVert #1\right\rVert}
\newcommand{\abs}[1]{\left\lvert #1\right\rvert}

\newcommand{\dd}{\,\textnormal{d}}

\renewcommand{\Re}{\operatorname{Re}}

\renewcommand{\leq}{\leqslant}
\renewcommand{\geq}{\geqslant}
\usepackage[normalem]{ulem}

\title[Higher order uniformity in shorter intervals]{Higher order uniformity of the primes and cancellation of the M\"obius function in shorter intervals}

\author{Kaisa Matom\"aki}
\address{Department of Mathematics and Statistics, University of Turku, 20014 Turku, Finland}
\email{ksmato@utu.fi}

\author{Mayank Pandey}
\address{Courant Institute of Mathematical Sciences, 251 Mercer Street, New York, NY, 10012}
\email{mayankpandey9973@gmail.com}
 
\author{Javier Pliego}
\address{Departamento de Matem\'aticas, Facultad de Ciencias, Universidad Aut\'onoma de Madrid, Ciudad Universitaria de Cantoblanco, 28049 Madrid, Spain}
\email{javier.pliego@uam.es}

\author{Joni Ter\"av\"ainen}
\address{Department of Pure Mathematics and Mathematical Statistics, University of Cambridge, Cambridge CB3 0WB, UK}
\email{joni.p.teravainen@gmail.com}

\author{Mengdi Wang}
\address{\'{E}cole Polytechnique F\'{e}d\'{e}rale de Lausanne (EPFL), Lausanne, Switzerland}
\email{mengdi.wang@epfl.ch}

\date{}

\begin{document}

\begin{abstract}
We prove the Gowers uniformity of the von Mangoldt function minus its Cram\'er model in all short intervals $[X,X+X^{3/5+\eps}]$, improving on the work of the first and fourth authors with Shao and Tao, where the exponent was $5/8$. This also implies a local-to-global theorem for linear equations in primes in such short intervals.

We also show that the M\"obius function has cancellation in all intervals $[X,X+X^{19/35+\eps}]$, improving on the work of the first and fourth authors, where the exponent was $11/20$.
  
Both improvements are based on improved treatment of trilinear sums over short intervals which naturally reduces to estimating mean values of products of three Dirichlet polynomials. In a general setting, we reduce the task of estimating mean values of products of Dirichlet polynomials with given large value bounds to the task of showing that a certain piecewise linear function is positive. This reduction allows recent large value estimates to be incorporated.

Combining this general framework with the most recent large value estimates stemming from the work of Guth and Maynard, we prove a Heath-Brown--Iwaniec type estimate for type I/II sums in intervals of length $X^{19/35+\varepsilon}$ and a Baker--Harman--Pintz parallelogram type estimate for general trilinear sums in intervals of length $X^{3/5+\varepsilon}$.
\end{abstract}

\maketitle

\makeatletter
\markboth
  {\MakeUppercase{Kaisa Matom\"aki, Mayank Pandey, Javier Pliego,
                   Joni Ter\"av\"ainen, Mengdi Wang}}
  {\MakeUppercase{Higher order uniformity in shorter intervals}}
\makeatother

\section{Introduction}
\subsection{Higher order uniformity results}
Our first main result concerns higher order uniformity of the von Mangoldt function $\Lambda$ minus its Cram\'er model in all short intervals of length $X^{3/5+\varepsilon}$. The Cram\'er model $\Lambda_X^\sharp$ is defined, following \cite[(1.1)]{MSTT}, by
\begin{equation}\label{eq:lambda-sharp}
\Lambda_X^\sharp(n)\coloneqq \frac{\Pcal(R)}{\varphi(\Pcal(R))}\,\1_{(n,\Pcal(R))=1}, \quad \text{with } R=\exp(\log^{1/10} X) \quad \text{and} \quad \mathcal{P}(R) = \prod_{p < R} p.
\end{equation}
The following theorem improves on the work of the first and fourth authors with Shao and Tao \cite[Theorem 1.5(i)]{MSTT}, where the interval length $X^{5/8+\eps}$ for the Gowers uniformity of $\Lambda-\Lambda^{\sharp}$ was obtained.

\begin{theorem}[Higher order uniformity of $\Lambda$]\label{thm:intro-uniformity}
Let $s\geq1$ be an integer and $\eps>0$. Uniformly for $X\geq3$ and
$$
X^{3/5+\eps}\leq H\leq X,
$$
one has\footnote{See Section~\ref{subsec:notation} for the definition of the Gowers norm}
\begin{equation}\label{eq:intro-gowers-theorem}
\norm{\Lambda-\Lambda_X^\sharp}_{U^s(X,X+H]}=o_{s,\eps}(1).
\end{equation}
\end{theorem}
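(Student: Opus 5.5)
The plan is to follow the architecture of \cite{MSTT} essentially verbatim, replacing only the arithmetic input that fixes the exponent $5/8$ by a sharper trilinear estimate valid for $H\geq X^{3/5+\eps}$.

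\textbf{Reduction to correlations with nilsequences.} By the inverse theorem for Gowers norms (Green--Tao--Ziegler), applied on $(X,X+H]$, it suffices to show that
\[
\sum_{X<n\leq X+H}(\Lambda-\Lambda_X^\sharp)(n)\,\overline{F(g(n)\Gamma)}=o(H)
\]
for every filtered nilmanifold $G/\Gamma$ of bounded complexity, every polynomial sequence $g$ and every $1$-Lipschitz $F$. The pseudorandom majorant needed to apply the inverse theorem to the unbounded function $\Lambda$ is handled exactly as in \cite{MSTT}. One also uses the fact that $\Lambda_X^\sharp$ is a Type I sum with small level. Then, via Heath-Brown's identity, one decomposes $\Lambda-\Lambda_X^\sharp$ into Type $I$, Type $I_2$ and Type $II$ pieces, together with general trilinear pieces $\alpha*\beta*\gamma$ supported on $m\sim M$, $\ell\sim L$, $k\sim K$ with $MLK\asymp X$.

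\textbf{Nilsequence side.} As in \cite{MSTT}, the factorization theorem and a quantitative equidistribution argument split the analysis into two cases. In the minor-arc (equidistributed) case, Type I and Type II sums are bounded by Cauchy--Schwarz and van der Corput, and this works in the admissible ranges for $H\geq X^{3/5+\eps}$, since those ranges depend only on $H$. In the major-arc case, $F(g(n)\Gamma)$ is essentially a character times $n^{it}$ with $|t|\leq X^{O(1)}$ on short progressions. The problem then reduces to bounding
\[
\sum_{X<n\leq X+H}(\Lambda-\Lambda^\sharp_X)(n)\chi(n)n^{-it}
\]
uniformly, and by Perron's formula this reduces to mean values
\[
\int_{T_0}^{X/H}\abs{M(1/2+it)L(1/2+it)K(1/2+it)}\dd t
\]
of products of three Dirichlet polynomials. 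These need a saving of a small power of $\log X$ (or $X^{-\delta}$) over the trivial bound $X^{1/2}\cdot X/H$ in the relevant form.

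\textbf{Main step: the trilinear mean value.} Here I would use the general framework promised in the abstract. Decompose the $t$-range dyadically according to the sizes $|M|\approx M^{\sigma_1}$, $|L|\approx L^{\sigma_2}$, $|K|\approx K^{\sigma_3}$. Bound the measure of each level set by the best available large value estimates: the mean value theorem, Huxley, Heath-Brown's large values for powers, and Guth--Maynard applied to $M^j$, $L^j$, $K^j$ for suitable $j$. Then apply H\"older across the three polynomials. Whenever a factor is smooth (a long $\gamma$ coming from the $\zeta$ factors of Heath-Brown's identity), use pointwise zeta bounds and the fourth/twelfth moments instead. This reduces the claim to showing that a piecewise linear function of $(\log M,\log L,\log K,\sigma_i)/\log X$ is positive on the relevant polytope once $T=X/H\leq X^{2/5-\eps}$, which yields a Baker--Harman--Pintz parallelogram-type estimate.

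I expect this verification to be the main obstacle. Specifically, one must check that every trilinear shape arising from Heath-Brown's identity is covered, with the borderline configurations being those where no factor is long enough to be treated as smooth. For those, I would first try Guth--Maynard applied to the longest nonsmooth polynomial combined with mean-value theorems on the other two. The case analysis would then be finished by a (possibly computer-assisted) check of the linear inequalities.

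Finally, the arithmetic bounds are combined with the nilsequence reduction as in \cite[Section 5]{MSTT}. This gives \eqref{eq:intro-gowers-theorem}, with the bound uniform in $H\in[X^{3/5+\eps},X]$.
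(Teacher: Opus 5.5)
Your skeleton is the paper's: the inverse theorem, Heath-Brown's identity, the nilsequence machinery of \cite{MSTT}, and a trilinear Dirichlet polynomial estimate for $T\leq X^{2/5-\eps}$ proved via level sets and a piecewise linear check. The arithmetic step, however, is set up in a way that does not work.

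\textbf{The twist is not low-height.} In the abelian obstruction the twist is $n^{iT}$ with $|T|$ as large as $(X/H)^{d+1}\log^{O(1)}X$. After Perron's formula, the polynomials are therefore evaluated on a window of length $\asymp X/H$ at height $\asymp|T|$, not on $[T_0,X/H]$. At such heights a ``smooth'' factor $\sum_{n\sim N}n^{-1/2-i(u-T)}$ has no exploitable zeta structure. There is no fourth or twelfth moment over such short windows, and pointwise one only has Vinogradov--Korobov type savings; this is the point of Remark~\ref{rem5.3}. Your plan of bounding the whole sum $\sum(\Lambda-\Lambda_X^\sharp)(n)\chi(n)n^{-it}$ for every $t$ by Perron and triple mean values therefore fails already on Type I pieces with a long smooth factor. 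There Cauchy--Schwarz and the mean value theorem give no saving at all.

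The paper never needs this step. Proposition~\ref{prop:non-abelian} (from \cite[Theorem~4.2(iii)]{MSTT}) produces one Type II component $h$ that correlates with $n^{iT}$ simultaneously with $f$.
\begin{itemize}
\item If $|T|<(X/H)\log^{C}X$, then $n^{iT}$ has polylogarithmic total variation, and the untwisted major arc estimate \cite[Theorem~3.1(i)]{MSTT} for $f$ suffices.
\item If $|T|\geq(X/H)\log^{C}X$, only $h$ has to be bounded (Proposition~\ref{prop:typeII-major}). Lemma~\ref{lem:BHP-trilinear-estimates} does this using only large value estimates valid for arbitrary divisor-bounded coefficients: the mean value theorem, Huxley, and Guth--Maynard/Chen--Gupta--Li, applied to $A_j$ and $A_j^2$. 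The Baker--Harman--Pintz parallelogram covers $\beta_1-\beta_3\leq1/5$.
\end{itemize}

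\textbf{Level sets alone give no $\log^{-A}$ saving.} Large value bounds only save on levels $\nu\geq\eps^6$. Points where some factor is within $X^{O(\eps^6)}$ of its trivial size are not controlled unless one factor obeys a uniform pointwise bound $\ll Z^{\gamma_2/2}\log^{-A}X$. The paper obtains this bound as follows. After the split above, the effective frequency $|t-T|$ lies in $[Z^{\eps/3},Z^{B+1}]$ on the whole Perron range. The shortest grouped factor contains an elementary factor ($1$, $\log$ or $\mu$ on an interval) of length $Z^{c}$, to which bounds of Vinogradov--Korobov strength \cite[Lemma~3.9]{MSTT} apply. The exceptional sparse set is then handled by Lemma~\ref{lem:sparse-mean-square}.

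\textbf{Your fallback for the borderline shapes fails.} Take the critical vertex $(\beta_1,\beta_2,\beta_3)=(1/2,1/4,1/4)$. The longest factor has length $X^{1/2}>T$, so Guth--Maynard gives nothing beyond the diagonal bound. With mean values on the other factors (even their squares), the level $\nu_1=\nu_2=\nu_3=1/4$ still gives the full $X^{1/2}$. What works (Lemma~\ref{le:2/5comput}) is the combination of:
\begin{itemize}
\item the diagonal bound for $A_1$;
\item the diagonal bound for $A_2^2$;
\item the Guth--Maynard term $\tau+\beta(4\nu-8/5)$ applied to the short factor $A_3$.
\end{itemize}
This succeeds with a margin of only $\eps/4$.

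\textbf{The range near $H=X$.} Theorem~\ref{thm:discorrelation} needs $H\leq X^{1-\eps}$. The range $X^{1-\eps}<H\leq X$ in the statement additionally requires the gluing Lemma~\ref{lem:gowers-gluing} and a comparison of $\Lambda_X^\sharp$ with $\Lambda_Y^\sharp$.
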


By applying the inverse theorem for the Gowers norms, Theorem~\ref{thm:intro-uniformity} reduces to showing discorrelation of the von Mangoldt function minus its model against all fixed complexity nilsequences. Such an estimate is the key technical input for proving Theorem~\ref{thm:intro-uniformity}, and since the approach via combinatorial decompositions is quite flexible, it naturally leads to a similar result for the M\"obius function.

\begin{theorem}[Nilsequence discorrelation]\label{thm:discorrelation}
Let $d,D \in \mathbb{N}$, let $\eps>0$, $\delta\in(0,1/2)$, $X\geq 3$, and
\begin{equation*}
  X^{3/5+\eps}\leq H\leq X^{1-\varepsilon}.
\end{equation*}
Let $G/\Gamma$ be a filtered nilmanifold\footnote{The symbols relating to nilsequences have the same meanings as in \cite[Section~2.3]{MSTT}.} of degree at most $d$, dimension at most $D$, and complexity at most $1/\delta$, and let $F\colon G/\Gamma\to\C$ be a Lipschitz function of norm at most $1/\delta$. Then, for every $A>0$,
\begin{align}
  \sup_{g\in\Poly(\Z\to G)} \left\lvert\sum_{X<n\leq X+H}\mu(n)\overline{F(g(n)\Gamma)}\right\rvert^{\,*} &\ll_{A,\eps,d,D} \delta^{-O_{d,D}(1)}\frac{H}{\log^A X},\label{eq:mobius-discor}\\
  \sup_{g\in\Poly(\Z\to G)} \left\lvert\sum_{X<n\leq X+H}(\Lambda(n)-\Lambda_X^\sharp(n))\overline{F(g(n)\Gamma)}\right\rvert^{\,*} &\ll_{A,\eps,d,D} \delta^{-O_{d,D}(1)}\frac{H}{\log^A X}.\label{eq:lambda-discor}
\end{align}
\end{theorem}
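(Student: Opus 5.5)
We follow the strategy of \cite{MSTT}, replacing only the treatment of the trilinear ("type I$_2$"/general type II) sums by a sharper mean value estimate for products of three Dirichlet polynomials. First we apply a combinatorial decomposition (Heath-Brown's identity for $\Lambda$ and $\mu$, followed by the usual dyadic localization) to write $\mu$ and $\Lambda-\Lambda_X^\sharp$, on $(X,X+H]$, as a combination of $O(\log^{O(1)}X)$ convolutions $\alpha_1*\cdots*\alpha_k$ with each $\alpha_i$ supported on a dyadic range $(N_i,2N_i]$, $\prod N_i\asymp X$. The Cramér model contributions are handled exactly as in \cite{MSTT}, since $\Lambda_X^\sharp$ admits a type I decomposition with short smooth factors. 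Grouping the factors, each convolution falls into one of three classes: type I sums (one smooth factor of length $\geq X^{\theta_1}$), type II sums (some subproduct has length in a range $[X^{\sigma},X^{\sigma'}]$), or genuinely trilinear sums $\alpha*\beta*\gamma$ in which no such grouping is available. With $H=X^{3/5+\eps}$ we choose the exponent ranges so that these three classes exhaust all factorizations.

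For the type I and type II sums we invoke the nilsequence estimates of \cite{MSTT}: by the quantitative equidistribution theory (factorization of $g$ into smooth, equidistributed and rational parts) it suffices to treat either the totally equidistributed case, where type I/II sums are bounded via Cauchy--Schwarz and van der Corput, or the "major arc" case, where $F(g(n)\Gamma)$ is essentially constant on short progressions and after a further decomposition becomes a twist $n^{it}\chi(n)$ with $|t|\ll X^{O(1)}$ and small conductor. The ranges established in \cite{MSTT} for these two cases do not depend on the bottleneck exponent $5/8$ except through the trilinear sums, so they should be reusable with $\theta$ adjusted to $3/5$.

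The heart of the proof, and the main obstacle, is the major arc trilinear case: we must show
\[
\sum_{X<n\leq X+H}(\alpha*\beta*\gamma)(n)\,n^{-it}\chi(n)\ll \frac{H}{\log^A X}
\]
uniformly in $|t|\leq X^{O(1)}$ and small-conductor $\chi$. Via Perron's formula this reduces to bounding $\int_{T_0\leq |u|\leq X/H}|A(1/2+iu)B(1/2+iu)C(1/2+iu)|\,du$ by $X^{1/2}(\log X)^{-A}$, where $A,B,C$ are the associated Dirichlet polynomials; when $t$ is very large we use the Vinogradov--Korobov zero-free region and a Halász-type pointwise bound on the non-smooth factors to gain the logarithmic saving. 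Here we apply the general framework described in the abstract: decompose the integration range dyadically according to the sizes $|A|\sim X_1^{\sigma_1}$, etc., bound the measure of each level set using the best available large value estimates (the mean value theorem, Huxley, Heath-Brown's bounds, and the Guth--Maynard estimate) for each polynomial and for their products, and reduce the claim to the positivity of a piecewise linear function of the exponent parameters $(\log N_i/\log X,\sigma_i)$ over the region left undecided by the type I/II analysis. Verifying this positivity for $H=X^{3/5+\eps}$ is a finite check of linear inequalities; I expect the hardest region to be where all three factors have length near $X^{1/3}$ and the Guth--Maynard bound is needed to beat the exponent $5/8$. For \eqref{eq:mobius-discor} the same argument applies, the decomposition of $\mu$ being identical in shape, and the $\log^{-A}$ saving in the Siegel-zero-free small-conductor case coming from Siegel's theorem.
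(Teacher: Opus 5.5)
Your outline has the right architecture: Heath-Brown's identity, the machinery of \cite{MSTT} for Type I, Type I$_2$ and non-abelian Type II sums, a Perron reduction of the abelian case to a mean value of three twisted Dirichlet polynomials, and a large-values/piecewise-linear check. However, the step that actually moves the exponent from $5/8$ to $3/5$ is asserted rather than proved, and your description of it is wrong in three concrete ways.

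First, in the abelian case every Type II component must pass through the trilinear estimate. With $T_1=(X/H)\log^{O(1)}X\le X^{2/5-\eps}$ and $M,N\in[X^{2/5},X^{3/5}]$, Cauchy--Schwarz and the mean value theorem give only $X^{1/2}\log^{O(1)}X$, so these sums are not ``reusable from \cite{MSTT}''.

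Second, the polynomials carry the twist $n^{iT}$ with $T$ possibly a large power of $X$. So only large value bounds valid for arbitrary divisor-bounded coefficients are admissible: the mean value theorem, Huxley, and Guth--Maynard/Chen--Gupta--Li, applied to $A_j$ and $A_j^2$. Type I input such as Lemma~\ref{le:MVHuGM}(ii) is lost (Remark~\ref{rem5.3}).

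Third, the region near $(1/3,1/3,1/3)$ that you expect to be hardest is handled by the classical Baker--Harman--Pintz parallelogram estimate ($|\beta_1-\beta_3|\le 1/5$). The new work is on the triangle $\beta_1\in[2/5,1/2]$, $\beta_2,\beta_3\in[1/5,3/10]$ (Lemma~\ref{le:2/5comput}, fed through Lemma~\ref{le:inversecost} and Proposition~\ref{prop:one-var-check}). Its extremal point is $(1/2,1/4,1/4)$: at $\tau=2/5$ the available bounds cannot rule out $X^{3/10}$ points at which all three factors are large with $\nu=3/10$. Since $3/5$ is exactly where this check stops working, calling it ``a finite check'' leaves the theorem unproved.

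Separately, the target you set for the components is false as stated. A single component $h=\alpha*\beta*\gamma$ has a main term. For $|t|\lesssim X/H$ the weight $n^{-it}$ is essentially constant on $(X,X+H]$, so $\sum_{X<n\le X+H}h(n)n^{-it}$ need not be small. Correspondingly, for $T_0<|t|\lesssim X/H$ the point $u\approx -t$, where the twisted polynomials have no cancellation, lies inside your range $T_0\le|u|\le X/H$.

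The paper avoids this by keeping a single frequency. Proposition~\ref{prop:non-abelian} yields one $T$ with $|T|\le (X/H)^{d+1}\log^{A_1}X$ at which both $f$ and some admissible trilinear component $h$ correlate with $n^{iT}$.

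If $|T|\ge (X/H)\log^{C}X$ (Proposition~\ref{prop:typeII-major}), then $|t-T|\ge X^{\eps/3}$ for all $|t|\le T_1$. So there is no main term and no lower cutoff. Opening the shortest factor $\mathcal M_2$ (of length $Z^{\gamma_2}$, $\gamma_2\ge 1/10$) into its elementary pieces gives a pointwise $\log^{-A}$ saving via \cite[Lemma~3.9]{MSTT}. This pointwise bound is a hypothesis of both Lemma~\ref{lem:BHP-trilinear-estimates} and Proposition~\ref{prop:one-var-check}. It is needed because level sets where every factor is nearly maximal cannot be controlled by large value counts with $X^{o(1)}$ losses; your ``Halász-type bound when $t$ is very large'' does not supply it.

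If $|T|<(X/H)\log^{C}X$, then $\|n^{iT}\|_{\TV((X,X+H]\cap\Z)}\ll\log^{C}X$. The correlation of $f$ itself (not of $h$) therefore transfers to progressions, contradicting the major arc estimate \cite[Theorem~3.1(i)]{MSTT}.
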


\textbf{Remarks.}
\begin{enumerate}
    \item We could also obtain a similar improvement for the $k \geq 4$ case of~\cite[Theorem 1.1(iii)]{MSTT} concerning higher order uniformity of the $k$-fold divisor function $d_k$. We leave the details for the interested reader.

    \item For the von Mangoldt function, this result improves on~\cite[Theorem 1.1]{MSTT}, where one had $H\in [X^{5/8+\varepsilon}, X^{1-\eps}]$. 

    \item For the M\"obius function, this result improves on~\cite[Theorem 1.1(iv)]{MSTT}, where the length range was the same but the savings were a small power of logarithm. The smaller savings already sufficed for proving the Gowers uniformity of $\mu$ in short intervals $(X,X+X^{3/5+\varepsilon}]$ in~\cite[Theorem 1.5(iii)]{MSTT}, so we do not state this consequence here.    
\end{enumerate}

As in~\cite{MSTT}, we obtain an immediate corollary for polynomial phases.
\begin{corollary}[Polynomial phases]\label{cor:polynomial-phases}
Let $\eps>0$, $d\geq 1$, $A>0$, $X\geq 3$, and $X^{3/5+\eps}\leq H\leq X^{1-\varepsilon}$. For every polynomial $P\in\R[Y]$ of degree at most $d$,
\begin{equation}\label{eq:mu-poly}
  \left\lvert\sum_{X<n\leq X+H}\mu(n)e(P(n))\right\rvert \ll_{A,d,\eps} \frac{H}{\log^A X}.
\end{equation}
Moreover, we have
\begin{equation}\label{eq:lambda-poly}
  \left\lvert\sum_{X<n\leq X+H}\Lambda(n)e(P(n))\right\rvert \ll_{A,d,\eps} \frac{H}{\log^A X},
\end{equation}
unless, writing
\begin{equation*}
  P(\lfloor X\rfloor +m)=\alpha_0+\alpha_1m+\cdots+\alpha_dm^d,
\end{equation*}
there exists an integer $1\leq q\leq \log^{O_{A,d,\eps}(1)} X$ such that
\begin{equation}\label{eq:major-arc-poly}
  \max_{1\leq j\leq d} H^j\norm{q\alpha_j}_{\R/\Z}
  \leq \log^{O_{A,d,\eps}(1)}X.
\end{equation}
\end{corollary}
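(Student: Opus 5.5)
The plan is to derive Corollary~\ref{cor:polynomial-phases} from Theorem~\ref{thm:discorrelation}, in the same way as the corresponding corollary in \cite{MSTT}.

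\textbf{The M\"obius bound \eqref{eq:mu-poly}.} A polynomial phase is a nilsequence of bounded complexity. Take $G=\R$ with the degree-$d$ filtration $G_0=G_1=\dots=G_d=\R$, $G_{d+1}=\{0\}$, and $\Gamma=\Z$. Take $g(n)=P(n)$, which lies in $\Poly(\Z\to G)$, and $F(x)=e(-x)$, which is Lipschitz on $\R/\Z$. Then $\overline{F(g(n)\Gamma)}=e(P(n))$. The dimension is $1$, the degree is $d$, and the complexity and Lipschitz norm are $O(1)$. So with $\delta$ a fixed constant, \eqref{eq:mobius-discor} gives \eqref{eq:mu-poly} directly. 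The starred absolute value in Theorem~\ref{thm:discorrelation} (maximal over subintervals) only strengthens what we need.

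\textbf{The von Mangoldt bound \eqref{eq:lambda-poly}.} By \eqref{eq:lambda-discor}, applied the same way, it suffices to show that
\[
\sum_{X<n\le X+H}\Lambda_X^\sharp(n)e(P(n))\ll_{A,d,\eps} H\log^{-A}X
\]
whenever \eqref{eq:major-arc-poly} fails. This is a sieve-weight estimate. I would proceed in four steps.
\begin{enumerate}[label=(\roman*)]
\item Expand $\1_{(n,\Pcal(R))=1}=\sum_{e\mid (n,\Pcal(R))}\mu(e)$ and use the fundamental lemma of sieve theory. This replaces the weight by a truncated sieve supported on $e\le X^{o(1)}$, say $e\le \exp(\log^{1/2}X)$, since $R=\exp(\log^{1/10}X)$. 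The error is $O(H\log^{-A}X)$.
\item Since $\Pcal(R)/\varphi(\Pcal(R))\ll\log R$, it then suffices to bound each sum $\sum_{m:\,X<em\le X+H}e(P(em))$ with saving $\log^{-2A}X$ over $H/e$, uniformly in $e\le X^{o(1)}$.
\item Bound these by Weyl's inequality in the quantitative form of Green--Tao: if $\abs{\sum_{m\le H/e}e(Q(m))}\ge \eta H/e$, then there is $q'\ll\eta^{-O_d(1)}$ with $\norm{q'\beta_j}\ll \eta^{-O_d(1)}(H/e)^{-j}$ for the coefficients $\beta_j$ of $Q$.
\item Write $n=\lfloor X\rfloor+r+em'$ with $0\le r<e$. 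The $j$-th coefficient of $m'\mapsto P(\lfloor X\rfloor+r+em')$ is $e^j\sum_{i\ge j}\binom{i}{j}r^{i-j}\alpha_i$. Recover $q\alpha_j$ by downward induction on $j$ from $j=d$, clearing the factors $e^j$. This gives \eqref{eq:major-arc-poly} with $q\le \log^{O(1)}X$, a contradiction.
\end{enumerate}

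\textbf{Main obstacle.} The delicate point is step (iv), because $e$ could be as large as $\exp(\log^{1/2}X)$, not polylogarithmic. Multiplying $q$ by $e^{d}$ would destroy the bound $q\le\log^{O(1)}X$. To fix this, I would make the sieve truncation much smaller. Take the level $D=\log^{C}X$ using a Brun-type or Selberg upper/lower-bound sieve with small sifting range. Alternatively, first handle the primes $p<w=\log^{C}X$ exactly by splitting into residue classes modulo $W=\prod_{p<w}p$, and treat the remaining primes in $[w,R)$ via the fundamental lemma with negligible error. This is exactly the argument carried out in \cite{MSTT}, and I would reproduce it with their parameters, which are unaffected by the change of exponent $5/8\to3/5$. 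Every $e$ appearing then satisfies $e\le\log^{O(1)}X$, and the coefficient recovery in step (iv) costs only polylogarithmic factors.
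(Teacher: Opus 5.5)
\textbf{Assessment: genuine gap.} Your proof of \eqref{eq:mu-poly} matches the paper. So does your reduction of \eqref{eq:lambda-poly}, via $\Lambda=(\Lambda-\Lambda_X^\sharp)+\Lambda_X^\sharp$, to a major arc bound for the Cram\'er model. The gap is in that last bound, exactly at the obstacle you flag: your proposed fix does not work. Sieve moduli far larger than $\log^{O(1)}X$ cannot be avoided when sifting all primes below $R=\exp(\log^{1/10}X)$.

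\textbf{Why a polylogarithmic level fails.} Let $\lambda_e$ be supported on $e\mid\Pcal(R)$, $e\leq D$, with $\lambda_1=1$. Every $n=pm$ with $p\in(D,R)$ prime and $m$ free of prime factors below $R$ receives weight $\sum_{e\mid(n,\Pcal(R))}\lambda_e=\lambda_1=1$, while $\1_{(n,\Pcal(R))=1}=0$. These $n$ have density $\asymp\log(\log R/\log D)/\log R$ in $(X,X+H]$. After multiplying by $\Pcal(R)/\varphi(\Pcal(R))\asymp\log R$, the $L^1$ error for $D=\log^{C}X$ is therefore $\gg H\log\log X$. This is why the fundamental lemma needs level $R^{s}$ with $s\to\infty$.

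\textbf{Why the $W$-trick variant fails.} With $w=\log^{C}X$, the modulus $W=\prod_{p<w}p=\exp((1+o(1))\log^{C}X)$ exceeds $H$, so its residue classes are useless. If instead $w$ is shrunk until $W$ is polylogarithmic, the primes in $[w,R)$ must still be sifted at level $R^{s}$. That produces moduli up to $\exp(\log^{1/10+o(1)}X)$. So the claim that every $e$ is $\leq\log^{O(1)}X$ is false. Step (iv) multiplies $q$ by $e^{d}$ and loses $e^{O(d)}$ in \eqref{eq:major-arc-poly}, so it breaks down for all $e$ beyond a fixed power of $\log X$.

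\textbf{The missing idea.} Keep the large moduli, and use the fact that a large correlation must come from \emph{many} of them.
Split the moduli dyadically, $e\sim E$. For $e\sim E$ one has $|S_e|\leq 2H/E$, and there are at most $E$ such $e$. So if one range contributes $\geq H\log^{-A'}X$, then $\gg E\log^{-A'}X$ moduli $e\sim E$ satisfy $|S_e|\gg (H/E)\log^{-A'}X$.
Apply your Weyl step to each of these moduli and pigeonhole a common denominator $q'\leq\log^{O(1)}X$. Then remove the factors $e^{j}$ with a Vinogradov-type lemma, working from $j=d$ downwards. The lemma says: if $\|e^{j}\theta\|_{\R/\Z}\leq\delta'$ for $\gg\delta E$ values of $e\sim E$, with $\delta'$ small and $E$ large in terms of $\delta$, then $\|q''\theta\|_{\R/\Z}\ll\delta^{-O(1)}\delta'E^{-j}$ for some $1\leq q''\ll\delta^{-O(1)}$. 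For $E\leq\log^{O(1)}X$, your original step (iv) already suffices. The cross terms involving $r$ are harmless once the higher coefficients are known to be major arc, since $E\leq H$. This yields \eqref{eq:major-arc-poly} with $q\leq\log^{O(1)}X$.

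\textbf{An alternative route.} Discard $\Lambda_X^\sharp-\Lambda^\sharp_{X,\mathrm I}$ using \eqref{eq:cramer-type-I-error}. Then apply the Type I case of \cite[Theorem~4.2]{MSTT}, as the paper does in its non-abelian reduction, to the Type I representation of $\Lambda^\sharp_{X,\mathrm I}$ from Proposition~\ref{prop:HB-reduction}, whose moduli are $\leq X^{3/10}=o(H)$. On the torus $\R/\Z$, its horizontal-character conclusion is precisely \eqref{eq:major-arc-poly}.
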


We also remark that, for the M\"obius function and when $\deg P=1$, an upcoming work of the third and fifth authors~\cite[Theorem 1.1]{PW} extends the orthogonality estimate~\eqref{eq:mu-poly} to the range $X^{0.58+\eps }\leq H \leq X$, albeit with a weaker error term $O(H(\log X)^{-1/3+\eps })$, thereby providing only a small logarithmic saving.

\begin{proof}[Proof of Corollary~\ref{cor:polynomial-phases} assuming Theorem~\ref{thm:discorrelation}]
Polynomial phases are nilsequences on the filtered torus $\R/\Z$ with filtration degree $d$, so \eqref{eq:mu-poly} follows directly from Theorem~\ref{thm:discorrelation}. For \eqref{eq:lambda-poly}, write $\Lambda=(\Lambda-\Lambda_X^\sharp)+\Lambda_X^\sharp$. The first term is covered by Theorem~\ref{thm:discorrelation}; the second is a standard major arc calculation for the Cram\'er model \eqref{eq:lambda-sharp}, giving a negligible contribution unless \eqref{eq:major-arc-poly} holds. This is the argument of \cite[Corollary~1.3]{MSTT} with $5/8$ replaced by $3/5$.
\end{proof}

\subsection{Results on short sums of arithmetic functions}

Using variants of the arithmetic inputs that go into our higher order uniformity results also leads to improvements in several results about untwisted short sums of multiplicatively defined sequences. In particular, we obtain  cancellation for sums of the M\"obius function in shorter intervals than was previously known, improving on the work of the first and fourth authors \cite[Theorem 1.1]{MatomakiTeravainenMobius}, who had the exponent $11/20 = 0.55$ in place of $19/35 \approx 0.54286$.

\begin{theorem}[M\"obius sums in short intervals]\label{thm:intro-mobius}
Let $\eps>0$. Uniformly for $X\geq3$ and
$$
  X^{19/35+\eps}\leq H\leq X,
$$
one has
\begin{equation}\label{eq:intro-mobius-theorem}
  \left|\sum_{X<n\leq X+H}\mu(n)\right|\ll_{\eps} \frac{H}{\log^{1/3-\eps} X}.
\end{equation}
\end{theorem}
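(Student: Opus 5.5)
We follow the strategy of \cite{MatomakiTeravainenMobius}, replacing only the arithmetic input on type I/II sums. Write $\mathcal S=\{n: n \text{ has a prime factor in } [P,Q]\}$ with $P=\exp(\log^{1-\eps'}X)$ and $Q$ a suitable small power of $X$, or more precisely the Ramar\'e-type set from \cite{MatomakiTeravainenMobius}. We split
\[
\sum_{X<n\le X+H}\mu(n)=\sum_{X<n\le X+H}\mu(n)\1_{\mathcal S}(n)+\sum_{X<n\le X+H}\mu(n)\1_{\mathcal S^c}(n).
\]
The integers with no prime factor in the chosen ranges have density $\asymp(\log P/\log Q)$. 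A Shiu/sieve upper bound in short intervals (valid since $H\ge X^{1/2+\eps}$) therefore bounds the second sum by $O(H(\log X)^{-1/3+\eps})$ after optimizing the parameters. This is exactly where the exponent $1/3$ in \eqref{eq:intro-mobius-theorem} comes from, as in \cite{MatomakiTeravainenMobius}.

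For the first sum, we use Ramar\'e's identity to write $\mu\1_{\mathcal S}$ as a bilinear convolution $\sum_{p\in[P,Q]} \mu(p)\,\mu(m)\,\omega(pm)^{-1}$ plus a negligible error. We then combine this with Heath-Brown's identity applied to $\mu(m)$. The effect is to decompose the sum, up to acceptable errors, into $O((\log X)^{O(1)})$ sums of the form
\[
\sum_{X<n\le X+H}(\alpha_1*\cdots*\alpha_k)(n),
\]
where each $\alpha_i$ is supported on a dyadic range $N_i$ with $\prod N_i\asymp X$, one factor is the short prime polynomial on $[P,Q]$, and the rest are either smooth (type I) or arbitrary divisor-bounded (type II). Perron's formula transfers the short-interval sum to
\[
\int_{T_0\le|t|\le X/H}\Bigl|\prod_i A_i(1/2+it)\Bigr|\,dt,
\]
since the contribution of $|t|\le T_0$ cancels against the long-interval average via the zero-free region, exactly as in \cite{MatomakiTeravainenMobius}. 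The required bound is $o(X^{1/2}/(\log X)^{A})$, so that dividing by $X/H$ gives saving relative to $H$.

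The core step, and the main obstacle, is bounding these mean values of products of Dirichlet polynomials when $T=X/H= X^{16/35-\eps}$. Here we would invoke the general framework developed in the paper: given large value estimates for each polynomial (the mean value theorem, Huxley, fourth moment/Heath-Brown for $\zeta$-like factors, and the Guth--Maynard large value estimate for the type II factors), the mean value bound reduces to positivity of a piecewise linear function of the exponents $(\log N_i/\log X,\ \sigma_i)$. We would perform a dyadic decomposition of $t$ according to the sizes $|A_i(1/2+it)|\approx N_i^{\sigma_i-1/2}$ and count each level set using the best available large value bound. We would then verify that for all configurations of the Heath-Brown decomposition, i.e.\ the Heath-Brown--Iwaniec type I/II region at $\theta=19/35$, the resulting exponent is negative. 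Configurations not covered by a type II factor in the admissible range are handled by splitting off the prime factor $p\in[P,Q]$ with the pointwise Vinogradov--Korobov-type bound $\sup_{T_0\le|t|\le T}|P(1+it)|\ll(\log X)^{-A}$. That bound and the standard Hal\'asz--Montgomery treatment from \cite{MatomakiTeravainenMobius} supply the logarithmic saving.

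The delicate part is checking the piecewise-linear inequality across all cases, which is precisely where $19/35$ is forced. Elsewhere we can reuse \cite{MatomakiTeravainenMobius} verbatim.
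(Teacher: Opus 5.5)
Your outline has the same architecture as the paper's proof: Ramar\'e's identity with a prime factor $p\in(P,Q]$, Heath-Brown's identity, Perron's formula with comparison to a long interval, then large-value estimates. But the step that carries all the content is only asserted. You say you would verify a piecewise-linear inequality over ``all configurations'' without saying which configurations arise, which large-value input each polynomial gets, or why the inequality holds. This matters, because the inequality is false unless the right input is attached to the right factor.

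In the paper one first applies Lemma~\ref{lem:combHBI} with $\tau=16/35-\eps/4$, which gives three cases.
\begin{enumerate}[label=(\roman*)]
\item One factor exceeds $X^{\tau}$. Since M\"obius factors have length at most $(2X)^{1/20}$, it is an interval indicator, and the paper handles this case by direct counting.
\item The factors split into two blocks of lengths in $[X^{\tau},X^{S-\tau}]$. Both lengths are at least $T_1$, so Cauchy--Schwarz and the mean value theorem lose nothing, and the Vinogradov--Korobov bound for the prime polynomial supplies the saving.
\item There is a partition $\{j_0\}\sqcup J_1\sqcup J_2$ with all three exponents in $(S-2\tau,\tau]$.
\end{enumerate}
Only (iii) needs Lemma~\ref{lem:hbi-trilinear-estimates}. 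The key structural point there is that $\beta_{j_0}>S-2\tau\approx 3/35>1/20$, which forces the singleton to be a smooth interval sum. That is what makes the fourth-moment bound $\mathcal N_{A_1}(\nu)\ll X^{\tau+2\beta_1(2\nu-1)+o(1)}$ of Lemma~\ref{le:MVHuGM}(ii) available. This bound holds only after discarding the $t$ with $|A_1(1/2+it)|<X^{\beta_1/2}T^{-1}\log^C X$, which are treated separately by Cauchy--Schwarz.

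Without this input the check fails. Use only the mean value, Huxley and Guth--Maynard/Chen--Gupta--Li bounds for $A_j$ and $A_j^2$, and take $(\beta_1,\beta_2,\beta_3)=(0.3,0.35,0.35)$ and $t=16/35-0.14\approx0.317$. Then $P_1+P_2+P_3=0.085+0.105+0.105<t$. The fourth moment raises $P_1$ to $0.115$ and the sum then exceeds $t$. The remaining verification is Lemma~\ref{le:16/35comput}. The extremal configuration $(1/5,2/5,2/5)$ at $\nu=2/7$ is what pins down $16/35$, and hence $19/35$.

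Two further points are off. First, the allocation of tools in your last paragraph is reversed. The Vinogradov--Korobov bound for the prime polynomial is what handles configurations that \emph{do} admit a Type II split. Configurations without one cannot be handled by it, since the mean value theorem then loses a power of $X$; that power saving has to come from the large-value analysis of the three blocks. In that case the paper simply absorbs the prime polynomial into one of the blocks.

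Second, the choice $P=\exp(\log^{1-\eps'}X)$ makes the Ramar\'e error $\log P/\log Q\asymp(\log X)^{-\eps'}$, not $(\log X)^{-1/3+\eps}$. The exponent $1/3$ comes from taking $P=\exp(\log^{2/3+\eta_0}X)$ together with $Q=X^{o(1)}$. This is essentially the smallest size for which the Vinogradov--Korobov zero-free region still gives $\sup_{T_0\le|t|\le T_1}|P_1(1/2+it)|\ll P_1^{1/2}\log^{-B}X$. The paper takes $Q=X^{1/(\log\log X)^2}$, which also keeps $S$ close to $1$ in Lemma~\ref{lem:combHBI}.
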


 We can similarly improve other results in~\cite{MatomakiTeravainenMobius}. Firstly, we can sharpen the exponent in a result about sums of two squares in short intervals (\cite[Theorem 1.2]{MatomakiTeravainenMobius}) from $11/20$ to $19/35$.
\begin{theorem}\label{thm1.3}
Let $N_{0}(X)$ be the number of integers $\leq X$ that are expressible as sums of two squares of integers. Let $\eps > 0$ be fixed. Uniformly for $X\geq 3$ and $X^{19/35+\eps}\leq H\leq X,$ one has
$$N_{0}(X+H)-N_{0}(X)=\frac{CH}{\log^{1/2} X}+O\left(\frac{H}{\log^{2/3-\eps} X}\right), $$
where $C=\frac{1}{\sqrt{2}}\prod_{p\equiv 3\mod{4}}(1-1/p^{2})^{-1/2}$ is the Landau--Ramanujan constant.
\end{theorem}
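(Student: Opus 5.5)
Theorem~\ref{thm1.3} follows the route of \cite[Theorem 1.2]{MatomakiTeravainenMobius}, and the only change is the short-interval input. Write $N_0(X+H)-N_0(X)=\sum_{X<n\leq X+H}\1_{S}(n)$, where $S$ is the set of sums of two squares. The indicator $\1_S$ is multiplicative, and on primes it equals $\tfrac12(1+\chi_{-4}(p))$ for odd $p$. So it behaves like a sieve-type function of ``dimension $1/2$''.

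The plan is to compare the short sum with the long sum $\frac{H}{X}\sum_{X<n\leq 2X}\1_S(n)$. The long sum is handled by the classical Landau--Ramanujan asymptotic with a strong error term, and it produces the main term $CH\log^{-1/2}X$. The remaining task is to show
\[
\sum_{X<n\leq X+H}\1_S(n)-\frac{H}{X}\sum_{X<n\leq 2X}\1_S(n)\ll \frac{H}{\log^{2/3-\eps}X}.
\]

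The key steps, in order, are as follows.

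\textbf{Step 1: decomposition.} Following \cite{MatomakiTeravainenMobius}, restrict to $n$ having a prime factorisation of a suitable shape. Concretely, $n$ should have a prime factor in a range like $[\exp(\log^{\theta}X),X^{\eta}]$, or several prime factors of controlled sizes. The $n$ without such a factor are few: sieve bounds show their density inside $S$ is smaller by a factor of roughly $\log^{-1/6+\eps}X$ relative to $\log^{-1/2}X$. Optimising this exceptional-set loss against the structured part is what produces the exponent $2/3$, as in the M\"obius case where $1/3$ arises. I would take this combinatorial part verbatim from the earlier paper.

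\textbf{Step 2: reduction to mean values.} On the structured part, Ramaré-type identities together with Heath-Brown or Vaughan-type decompositions express the sum as type I/II and trilinear sums with coefficients built from $\1_S$ restricted to primes. These coefficients are $\tfrac12(1+\chi_{-4})$-twisted, hence divisor-bounded with suitable mean-value and Siegel--Walfisz type properties. Via Perron's formula, each piece reduces to mean values over $|t|\leq X/H$ of products of Dirichlet polynomials. The characters involved are only $1$ and $\chi_{-4}$, and the mean-value theorems apply equally to $\chi_{-4}$-twisted polynomials.

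\textbf{Step 3: the new input.} In place of the $11/20$ Heath-Brown--Iwaniec type estimate used in \cite{MatomakiTeravainenMobius}, insert the type I/II estimate valid for $H\geq X^{19/35+\eps}$, which is the same estimate that drives Theorem~\ref{thm:intro-mobius}. It should be stated for general coefficient sequences of bounded type, or at least for those satisfying the Guth--Maynard-based large value bounds. The proof of Theorem~\ref{thm:intro-mobius} is then transcribed with $\mu$ replaced by $\1_S$.

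The main obstacle is checking that the $19/35$ arithmetic estimate genuinely applies to the coefficient shapes arising from $\1_S$. In particular, the large-value and pointwise bounds for the $\chi_{-4}$-twisted prime polynomials must match those assumed for $\mu$. I would first verify that the general framework only uses $\ell^2$ norms and the Guth--Maynard large value bounds, which hold for arbitrary coefficients, together with pointwise savings for long prime-supported polynomials. Those pointwise savings hold for $\chi_{-4}$-twists by the zero-free region for $L(s,\chi_{-4})$. Once this is verified, the remaining log-power bookkeeping is identical to the earlier paper.
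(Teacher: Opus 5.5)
Your outline matches the paper in several places: the Ram\'are extraction, whose half-dimensional sieve loss gives the $\log^{-2/3+\eps}$ error; a Heath-Brown-type decomposition; Perron's formula; and a trichotomy of lengths. The gap is in the step you flag as the main obstacle, and it would fail the way you propose to resolve it. The $19/35$ input, Lemma~\ref{lem:hbi-trilinear-estimates}, is \emph{not} an estimate for arbitrary divisor-bounded coefficients. It needs one factor to be a smooth sum $a_1(n)=\1_I(n)\chi(n)\log^r n$. That factor supplies the pointwise saving on $[T_0,T_1]$, which comes from \cite[Lemma~3.9]{MSTT} and not from primes. Crucially, it also supplies the fourth-moment large value bound $\mathcal N_{A_1}(\nu;\mathcal T)\ll X^{\tau+2\beta_1(2\nu-1)+o(1)}$ of Lemma~\ref{le:MVHuGM}(ii).

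With only $\ell^2$ and Huxley/Guth--Maynard bounds (Lemma~\ref{le:MVHuGM}(i), any $k$), the check of Proposition~\ref{prop:one-var-check} already fails. Take $\beta_1=\beta_2=\beta_3=1/3$, $\tau=16/35$ and $t=34/105$. Then $\mathcal K(1/3,3/10;\tau)=t$, and the $k=2$ bound gives only $t/4$. So each $P_j(t)=1/10$, and $\sum_jP_j(t)=3/10<34/105$. The fourth-moment bound instead gives $P_1(t)\geq(t-\tau+2\beta_1)/4=2/15$, and $2/15+1/5=1/3>34/105$.

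So the missing idea is structural. The decomposition of $\1_S$, as in \cite[Section~5.1]{MatomakiTeravainenMobius}, must produce components in which every factor longer than $(2X)^{1/20}$ is a $\chi$-weighted, possibly log-weighted, interval sum. You then apply Lemma~\ref{lem:combHBI} with $\tau=16/35-\eps/4$. Since $S-2\tau>1/20$, the singleton in case (iii) is forced to be such a smooth factor, so Lemma~\ref{lem:hbi-trilinear-estimates} applies to it. Case (i), where one factor is longer than $X^{\tau}$, also needs this smoothness, via periodicity and partial summation. Only case (ii) uses the $\chi_{-4}$-twisted prime polynomial bound you mention. Coefficients ``built from $\1_S$ restricted to primes'' would not serve in cases (i) or (iii).

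A smaller point concerns your comparison interval. You compare with $\frac HX\sum_{X<n\leq 2X}\1_S(n)$ rather than $\frac HY\sum_{X<n\leq X+Y}$ with $Y=X\exp(-3\log^{1/3}X)$. With your choice the Perron kernels disagree for $|t|\leq T_0$, so the low-frequency parts do not cancel and would have to be evaluated separately.
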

Likewise, we can improve the exponent $\theta_{k}$ for which the expected asymptotic formula for the sum of the $k$-fold divisor functions $d_{k}$ on short intervals of length $H\geq X^{\theta_{k}+\varepsilon}$ is valid. The previous best known value for $\theta_{k}$ when $k\geq 5$ was $\theta_{k}=11/20$ (see~\cite[Theorem 1.3]{MatomakiTeravainenMobius}).
\begin{theorem}\label{thm1.4}
Let $\eps > 0$ and let $k\geq 5$ be an integer. Uniformly for $X\geq 3$ and $X^{19/35+\eps}\leq H\leq X$ one has
$$\sum_{X<n\leq X+H}d_{k}(n)=(X+H)P_{k-1}(\log(X+H))-XP_{k-1}(\log X)+O\big(H(\log X)^{(2/3+\eps)k-1}\big),$$ where $P_{k-1}$ is an explicit polynomial of degree $k-1$ (see \cite[equation (5.36) in Section II.5.4]{Tenenbaum2015})\footnote{In~\cite[Theorem 1.3]{MatomakiTeravainenMobius} the main term is simplified to $HQ_{k-1}(\log X)$ where $Q_{k-1}=P_{k-1}+P_{k-1}'$ in the current paper's notation, but in fact this is only valid if $H/X\ll(\log X)^{1-k/3+k\eps}$, and for $H$ close to $X$ one must retain the difference above.}.
\end{theorem}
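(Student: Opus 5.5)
Following the template of \cite[Theorem 1.3]{MatomakiTeravainenMobius}, we deduce Theorem~\ref{thm1.4} from the same arithmetic input used for Theorem~\ref{thm:intro-mobius}, with only the interval-length threshold changing from $11/20$ to $19/35$.

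\textbf{Setting up the comparison.} We compare the short sum of $d_k$ with a long sum over $(X, X+X/\log^{B}X]$, or more precisely with $\frac{H}{H'}\sum_{X<n\le X+H'} d_k(n)$ for a suitable $H'$ with $X/H'$ a small power of $\log X$. For the long interval the main term $(X+H)P_{k-1}(\log(X+H))-XP_{k-1}(\log X)$ follows from the classical asymptotic $\sum_{n\le x}d_k(n)=xP_{k-1}(\log x)+O(x^{1-c_k})$. The work is therefore to bound the difference between the normalized short and long averages by $O(H(\log X)^{(2/3+\eps)k-1})$.

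\textbf{Splitting $d_k$ by factorization.} Following \cite{MatomakiTeravainenMobius}, we split $d_k(n)$ according to whether $n$ has a convenient factorization. The good part should consist of $n$ whose prime factors give a product structure $n=m_1m_2m_3$ (or $n=mp$), with the factors lying in ranges where the new type I/II estimate applies. The remaining integers form a sparse set. Their contribution is controlled by sieve upper bounds together with the pointwise estimate $\sum d_k(n)\ll H(\log X)^{k-1}$ restricted to such sets. This is where the loss $(\log X)^{(2/3+\eps)k-1}$ comes from: $d_k$ restricted to integers lacking prime factors in certain ranges has density reduced by a factor $(\log X)^{-k/3}$ or similar.

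\textbf{Handling the good part.} For the good part, a Perron-formula reduction converts the difference of short and long sums into a mean value estimate
\[
\int_{T_0\le |t|\le X/H} |M_1(1+it)M_2(1+it)M_3(1+it)|\dd t ,
\]
where $T_0$ is a power of $\log X$. Here $M_j$ are Dirichlet polynomials whose coefficients are $d_{k_j}$-type or smooth. This is precisely the Heath-Brown--Iwaniec type type I/II estimate for intervals of length $X^{19/35+\eps}$ announced in the abstract, which we take as the black-box input replacing the $11/20$ input of \cite{MatomakiTeravainenMobius}. For $k\ge5$ the divisor function $d_k$ is at least as factorizable as $\mu$ (it is a $k$-fold convolution of $1$), so Heath-Brown's identity is not needed. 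One can directly group the $k$ variables into at most three products lying in the admissible ranges, after removing small and large primes as above.

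\textbf{Main obstacle.} The main obstacle is checking that the ranges permitted by the $19/35$ type I/II estimate are compatible with the combinatorial grouping of $d_k$ for every $k\ge5$ at the stated logarithmic loss. Concretely, we must show that each tuple of $k$ factor sizes summing to $1$ (in $\log X$ units), with no factor in a bad range, can be partitioned into at most three pieces in the good region. We would first try exactly the partition argument of \cite[Section 5]{MatomakiTeravainenMobius} with the new exponents substituted. If it fails for small $k$ such as $k=5$, we would exploit the long smooth variables available when some of the $k$ factors are large.
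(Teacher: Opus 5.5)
\textbf{There is a genuine gap.} Your outline (comparison with a longer interval, an exceptional sparse set, Perron's formula, trilinear mean values) has the right shape. However, the step you flag as the main obstacle does fail, and the ingredient that rescues it is missing from your plan.

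You claim every size configuration of $n_1\cdots n_k$ is either of type I or can be grouped into three products meeting the hypotheses of the Heath-Brown--Iwaniec estimate, Lemma~\ref{lem:hbi-trilinear-estimates}. That lemma needs lengths $X^{\beta_j}$ with $\eps\leq\beta_j\leq 16/35-\eps$ and $\beta_1+\beta_2+\beta_3=1$, with the singleton a character-weighted interval sum. So the singleton must have $\beta_{j_0}\geq 3/35+2\eps$, and each of the other two blocks must have length at most $X^{16/35-\eps}$.
\begin{itemize}
\item Take $k=5$ and sizes $(0.24,0.24,0.24,0.24,0.04)$. No factor exceeds $X^{16/35}$, so type I is unavailable. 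Whatever singleton you choose, one of the two remaining blocks contains two factors of size $X^{0.24}$ (or has total length $X^{0.96-\beta}$ with $\beta\le 0.24$), hence has length at least $X^{0.48}>X^{16/35}$.
\item The same happens for $(1/4,1/4,1/4,1/4,0)$.
\item For $k\geq 12$, all factors can even be shorter than $X^{3/35}$.
\end{itemize}
These are precisely the configurations of Lemma~\ref{lem:combHBI}(ii): two blocks with lengths in $[X^{\tau},X^{1-\tau}]$. There, Cauchy--Schwarz with the mean value theorem yields only $X^{1/2}\log^{O(1)}X$. One needs an additional short factor with a pointwise saving on $[T_0,T_1]$ that does not shorten the blocks. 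Your fallback of exploiting long smooth variables is unavailable, since no factor is long. Nor can unfavourable configurations be discarded as a sparse set: any open region of size configurations carries a positive proportion of the $d_k$-mass.

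\textbf{The missing idea is Ramar\'e's identity}, as in \cite[Section~5.1]{MatomakiTeravainenMobius} and Section~\ref{ssec:Ramare}. You gesture at it with ``$n=mp$'' and ``integers lacking prime factors in certain ranges'', but the prime variable never enters your mean value estimate, and it is not bookkeeping.
\begin{itemize}
\item The exceptional set is the set of $n$ with no prime factor in $(P,Q]$, where $P=\exp(\log^{2/3+\eta_0}X)$ and $Q=X^{1/(\log\log X)^2}$. The rest is written as a convolution containing a prime variable $p\in(P,Q]$.
\item For $P_1\in[P,Q]$, the prime polynomial satisfies $\sum_{P_1<p\leq 2P_1}p^{-1/2-it}\ll_B P_1^{1/2}\log^{-B}X$ on $T_0\leq|t|\leq T_1$, by the Vinogradov--Korobov zero-free region.
\item This pointwise bound disposes of case (ii) by Cauchy--Schwarz (Case~2.1 in the proof of Proposition~\ref{prop:ramare-component-comparison}).
\item Lemma~\ref{lem:hbi-trilinear-estimates} is used only in case (iii), where $S-2\tau>1/20$ guarantees that the singleton is an interval factor.
\item The lower endpoint $P$ is forced by the zero-free region. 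The $d_k$-mass of the exceptional set is about $H\log^{k-1}X\,(\log P/\log Q)^k$, which is exactly where the error $H(\log X)^{(2/3+\eps)k-1}$ comes from, not from discarding bad factorization shapes.
\end{itemize}

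A smaller, fixable point: comparing with an interval of length $X/\log^{B}X$ forces $T_0$ to be a power of $\log X$. That is below the threshold $\exp(\log^{1/4}X)$ built into Lemma~\ref{lem:hbi-trilinear-estimates}. The paper compares with $Y=X\exp(-3\log^{1/3}X)$ so that $T_0=\exp(\log^{1/3}X)$ is admissible.
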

Lastly, we include a result concerning $E_{2}$ almost primes, that is, numbers of the shape $p_{1}p_{2}$ with $p_{1},p_{2}$ primes, improving the exponent $11/20$ in~\cite[Theorem 1.4]{MatomakiTeravainenMobius}.
\begin{theorem}\label{thm1.5}
Let $\varepsilon>0$. Uniformly for $X\geq 100$ and $X^{19/35+\eps}\leq H\leq X$ one has
$$   \sum_{\substack{X<n\leq X+H \\ n\in E_{2}}}1=\frac{H\log\log X}{\log X}+O\Big(\frac{H\log\log\log X}{\log X}\Big). $$
\end{theorem}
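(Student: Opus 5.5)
The plan is to follow the argument of \cite[Theorem 1.4]{MatomakiTeravainenMobius}, which derives the $E_2$ count in intervals of length $X^{11/20+\eps}$ from a type I/II estimate. We replace that estimate with the new Heath-Brown--Iwaniec type type I/II bound valid for $H\geq X^{19/35+\eps}$, whose existence is announced in the abstract. We write
\[
\sum_{\substack{X<n\leq X+H\\ n\in E_2}}1 = \sum_{\substack{X<p_1p_2\leq X+H}}1
\]
and split dyadically according to the size of the smaller prime $p_1=P$. For $P\leq X^{\eta}$ with $\eta$ small, the sum $\sum_{p_1\sim P}\sum_{X/p_1<p_2\leq (X+H)/p_1}1$ involves primes in intervals of length $H/P\geq X^{19/35-\eta}(X/P)^{0}$, relative to a scale $X/P$. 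By Huxley's prime number theorem in short intervals (exponent $7/12<19/35$? no: $7/12\approx0.583>0.543$), this is not directly available. Instead we treat all ranges uniformly via the Dirichlet polynomial method.

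\textbf{Step 1: reduction to mean values.} We compare the count with the same count over the long interval $(X,X+X/\log^{B}X]$, rescaled by $H$ divided by the long length. This comparison is made on the set of $n$ having a factorisation $n=p_1p_2$ with $p_1\in[\exp(\log^{\delta}X), X^{1/2}]$; the contribution of very small $p_1$ is first removed by a sieve/Shiu-type upper bound. That contribution is $O(H\log\log\log X/\log X)$ once $p_1\leq \exp((\log\log X)^{C})$, and this is where the error term in the statement comes from. By Perron's formula, the difference is controlled by the integral
\[
\int_{X/H\ll|t|\leq X^{1+\eps}/H}|P_1(1+it)P_2(1+it)|\dd t ,
\]
where $P_1,P_2$ are prime Dirichlet polynomials of lengths $P$ and $X/P$. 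The main term $H\log\log X/\log X$ is computed from the long interval via the prime number theorem, by summing $1/(p_1\log(X/p_1))$.

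\textbf{Step 2: decomposing $P_2$.} We apply Heath-Brown's identity to the longer prime polynomial $P_2$, and use factorisation (Ramaré-type, as in \cite{MatomakiTeravainenMobius}) to pull out a small prime factor of $n$. This expresses the integrand as products of Dirichlet polynomials of type I shape (long smooth factors) and type II shape. Each resulting product is then bounded by the general framework of the paper: the mean value of a product of Dirichlet polynomials with given large value bounds, including the Guth--Maynard large value estimate. This yields a saving of $\log^{-B}X$ whenever $H\geq X^{19/35+\eps}$. The Heath-Brown--Iwaniec type type I/II estimate in intervals of length $X^{19/35+\eps}$ is precisely the ingredient that replaces the $11/20$ input of \cite{MatomakiTeravainenMobius}.

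\textbf{Main obstacle.} The hard part is the range where the decomposition produces no factor in a good type I/II range. Such configurations (for instance certain products of three or more primes inside $P_2$) cannot be handled asymptotically. As in \cite{MatomakiTeravainenMobius}, we would treat them only via upper bounds, and show that their density is $O(\log\log\log X/\log X)$ relative to the main term. Checking that the exceptional set of exponent configurations for $19/35$ still has this small measure, so that it fits inside the stated error term, is the step I expect to require the most care.
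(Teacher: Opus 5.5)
There is a genuine gap in Step~2. You never say where the logarithmic saving comes from once the small prime is short, and ``the general framework'' does not supply it.

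The main term is carried by $p_1\in[\exp((\log\log X)^2),X^{\varepsilon}]$, and you cannot raise the lower cutoff. Since $\sum_{p\leq\exp(\log^{\delta}X)}1/p\approx\delta\log\log X$, the range between your discarded set $p_1\leq\exp((\log\log X)^C)$ and your comparison set $p_1\geq\exp(\log^{\delta}X)$ contributes $\asymp\delta H\log\log X/\log X$. That is far more than the permitted error.

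But for $P_1$ below $\exp(\log^{2/3+\eta}X)$, no pointwise bound like \eqref{P1} is known for $P(s)=\sum_{P_1<p\leq 2P_1}p^{-s}$ (your $P_1(s)$) at heights $X^{0.01}\leq|t|\leq X^{16/35}$. This bound is exactly what is needed. After Heath-Brown's identity for the large prime, alternative (ii) of Lemma~\ref{lem:combHBI} is handled (Case~2.1 of Proposition~\ref{prop:ramare-component-comparison}) by Cauchy--Schwarz plus such a pointwise bound. The large-value tools do not substitute for it: Proposition~\ref{prop:one-var-check} itself assumes one factor with a pointwise $\log$-power saving. Lemma~\ref{lem:hbi-trilinear-estimates} needs a smooth factor of length at least $X^{3/35}$ with the other two blocks at most $X^{16/35}$, which configurations built from short factors do not provide.

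The paper's missing idea is to split $[T_0,T_1]$ into $\mathcal T_1=\{t:|P(1/2+it)|>P_1^{1/2}\log^{-10A}X\}$ and its complement $\mathcal T_2$.
\begin{itemize}
\item On $\mathcal T_2$ the pointwise bound holds by definition. So the proof of Proposition~\ref{prop:ramare-component-comparison}, including Lemma~\ref{lem:hbi-trilinear-estimates} in case (iii), goes through unchanged.
\item On $\mathcal T_1$ one has $|\mathcal T_1|\ll X^{o(1)}$, from a high moment of $P$ using $P_1\geq\exp((\log\log X)^2)$. If some factor exceeds $(2X)^{1/k}$ (necessarily a smooth one), Weyl's method gives a power saving that beats $|\mathcal T_1|$. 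Otherwise, with $k=100$, one partitions $\{1,\dots,2k\}=\{j_0\}\sqcup J_1\sqcup J_2$ with $N_{j_0}\geq X^{1/1000}$, which has a pointwise bound, and $M_1,M_2\geq X^{0.48}>T_1$. Then Cauchy--Schwarz and the mean value theorem lose nothing.
\end{itemize}

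Second, the obstacle you single out does not exist, and your remedy would fail if it did. With $p_1\leq X^{\varepsilon}$ and $\tau=16/35-\varepsilon/4$, Lemma~\ref{lem:combHBI} puts every Heath-Brown configuration into one of three cases: case (i) (a smooth factor longer than $X^{\tau}$, a type I sum), case (ii), or case (iii). Nothing is left over.

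Moreover, Heath-Brown components carry signed M\"obius coefficients, and their absolute values add up to far more than the main term. So they cannot be discarded using upper bounds. Even in a Buchstab-type decomposition, dropping a fixed set of configurations for the large prime would lose a constant proportion of the main term, not $O(H\log\log\log X/\log X)$.

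The only upper-bound steps needed are for $p_1$ outside $[\exp((\log\log X)^2),X^{\varepsilon}]$. The lower range produces the $\log\log\log X$ error term, and the upper range contributes $O_{\varepsilon}(H/\log X)$.
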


%There is one final result in ~\cite{MatomakiTeravainenMobius} which deals with the M\"obius function twisted by a linear phase function in short intervals. The methods of the present paper do not lead to any further improvement in this problem, but it will be treated in an upcoming article of the third and fifth authors.

\subsection{A result on short patterns in the primes}

Theorem~\ref{thm:intro-uniformity} also yields an asymptotic local-to-global theorem for primes in short intervals, improving on \cite[Theorem~1.7]{MSTT}. The deduction of the following theorem from Theorem~\ref{thm:intro-uniformity} is the same as~\cite[Proof of Theorem 1.7 in Section~10]{MSTT} and we will not repeat it in this paper.

\begin{theorem}[Linear equations in primes in short intervals of exponent $3/5$]\label{thm:intro-linear}
Let $\eps>0$ and $X^{3/5+\eps}\leq H\leq X^{1-\eps}$. Let $d,t\geq1$ be  integers and let $L\geq1$. Let $\Psi=(\psi_1,\ldots,\psi_t)$ be a system of affine linear forms $\psi_i\colon\Z^d\to\Z$ of the shape
$$
  \psi_i(\mathbf n)=\dot\psi_i\cdot \mathbf n+\psi_i(0),
  \qquad \dot\psi_i\in\Z^d,
$$
with $0<\|\dot\psi_i\|_\infty\leq L$ and $\abs{\psi_i(0)}\leq LX$, and suppose that $\dot\psi_i$ and $\dot\psi_j$ are linearly independent over $\mathbb Q$ whenever $i\neq j$. Let $K\subset(X,X+H]^d$ be a convex body. Extend $\Lambda$ by zero to the nonpositive integers. Then,
\begin{equation}\label{eq:intro-linear-theorem}
  \sum_{\mathbf n\in K\cap\Z^d}\prod_{i=1}^t\Lambda(\psi_i(\mathbf n))
  =\beta_\infty\prod_p\beta_p+o_{d,t,L,\eps}(H^d),
\end{equation}
where
$$
  \beta_\infty=\vol\bigl(K\cap\Psi^{-1}(\R_{>0}^t)\bigr)
$$
and
$$
  \beta_p=\E_{\mathbf n\in(\Z/p\Z)^d}\prod_{i=1}^t \frac{p}{p-1}\1_{\psi_i(\mathbf n)\not\equiv0\pmod p}.
$$
\end{theorem}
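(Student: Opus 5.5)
The plan is to follow the Green--Tao transference ("generalized von Neumann") strategy, as in \cite[Section~10]{MSTT}, with Theorem~\ref{thm:intro-uniformity} as the only new arithmetic input. First decompose $\Lambda=\Lambda_X^\sharp+(\Lambda-\Lambda_X^\sharp)$ on $(X,X+LH']$-type windows. Here we cover the relevant ranges of each $\psi_i(\mathbf n)$, $\mathbf n\in K$, by intervals of length comparable to $H$. Since $\psi_i(\mathbf n)$ ranges over an interval of length $O(LH)$ around a point of size $O(LX)$, Theorem~\ref{thm:intro-uniformity} still applies with $X$ replaced by a comparable scale and $H$ by $\asymp H\geq X^{3/5+\eps/2}$. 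We then expand the product $\prod_i\Lambda(\psi_i(\mathbf n))$ into $2^t$ terms. The main term is $\sum_{\mathbf n\in K}\prod_i\Lambda_X^\sharp(\psi_i(\mathbf n))$. Every other term contains at least one factor $\Lambda-\Lambda_X^\sharp$.

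The main term is a sieve-type count. Since $\Lambda_X^\sharp$ depends only on $n$ modulo $\Pcal(R)$ with $R=\exp(\log^{1/10}X)$, and $\Pcal(R)$ is far larger than $H$, we cannot simply equidistribute modulo $\Pcal(R)$ directly. Instead we use the fundamental lemma of sieve theory, or equivalently a truncated divisor expansion of $\1_{(n,\Pcal(R))=1}$ with moduli up to $H^{o(1)}$, to write it as a combination of local densities. Using the linear independence of the $\dot\psi_i$ and convexity of $K$ (lattice point counts in $K$ with congruence conditions have error $O(H^{d-1}q^{O(1)})$), this gives $\vol(K\cap\Psi^{-1}(\R_{>0}^t))\prod_{p<R}\beta_p+o(H^d)$. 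The product over $p\geq R$ is $1+o(1)$, since pairwise independence gives $\beta_p=1+O_{t,L}(p^{-2})$ for large $p$.

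For the error terms we need a pseudorandom majorant. Both $\Lambda$ and $\Lambda_X^\sharp$ are bounded by $O(1+\nu)$ for a Goldston--Y{\i}ld{\i}r{\i}m/Selberg-type truncated divisor sum $\nu$ at level $H^{c}$ with $c$ small. The linear forms condition for $\nu$ on boxes of side $H$ holds, again because level $H^c$ is much smaller than the box size. The generalized von Neumann theorem, applied in its short-interval form after restricting to $K$ by a standard smoothing or Fourier truncation of the convex indicator, then bounds each cross term by $\ll\norm{\Lambda-\Lambda_X^\sharp}_{U^{s}(\text{interval})}+o(1)$ for some $s=s(d,t,L)$. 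By Theorem~\ref{thm:intro-uniformity} this is $o(H^d)$.

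The main obstacle is the bookkeeping in the majorant and von Neumann step at short scale. We must check that the $t$ linear forms, whose ranges are windows of length $\asymp H$ at height $\asymp X$ (possibly different heights for different $i$), can be handled by Gowers norms on intervals of the shape covered by Theorem~\ref{thm:intro-uniformity}. The natural first attempt is to translate each form to $\psi_i(\mathbf n_0)+\dot\psi_i\cdot\mathbf m$ with $\mathbf m$ in a box of side $H$. We then embed in $\Z/N\Z$ with $N\asymp H$ prime, and verify that the $W$-trick is unnecessary because $\Lambda_X^\sharp$ already absorbs small-prime biases. All of this is exactly as in \cite[Section~10]{MSTT}, with $5/8$ replaced by $3/5$.
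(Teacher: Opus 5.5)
Your overall route is the paper's: it proves Theorem~\ref{thm:intro-linear} by rerunning \cite[Section~10]{MSTT} with Theorem~\ref{thm:intro-uniformity} as the only new input. There is, however, a genuine gap in the von Neumann step. The one place where you consciously depart from the standard argument is the claim that the $W$-trick is unnecessary because $\Lambda_X^\sharp$ absorbs the small-prime biases, and that step would fail.

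It is true that $\Lambda-\Lambda_X^\sharp$ has no small-prime bias, so its Gowers norm can be small. But the generalized von Neumann theorem is a statement about the \emph{majorant}, and it requires the linear forms averages of $\nu$ to be $1+o(1)$. This forces $\norm{\nu-1}_{U^2}=o(1)$, so $\nu$ has mass $1/q+o(1)$ in each residue class modulo any fixed $q$. Now suppose $\Lambda\leq C\nu$, take $q=W=\prod_{p<w}p$ and a reduced class. You get $1/\varphi(W)\leq C/W+o(1)$, i.e.\ $C\geq W/\varphi(W)-o(1)\gg\log w$. Since $w$ is arbitrary, no majorant of $\Lambda$ (or of $\Lambda_X^\sharp$) with bounded constant satisfies the linear forms condition.

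Concretely, for your Goldston--Y{\i}ld{\i}r{\i}m weight at level $H^c$, the averages $\E\prod_i\nu(\psi_i(\mathbf n))$ tend to a singular series, not to $1$; the smallness of the level only controls the error terms. Separately, a level-$H^c$ weight does not even majorize $\Lambda_X^\sharp$ pointwise. At primes $n$ just above $R$ it is $O(\log^2 n/\log H)=o(\log R)$, whereas $\Lambda_X^\sharp(n)\asymp\log R$.

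The remedy is the standard Green--Tao $W$-trick, which is what the cited argument rests on (cf.\ the model $\Lambda_w$ of \cite[Section~9]{MSTT} that the paper invokes in proving Theorem~\ref{thm:intro-uniformity}). The steps are as follows.
\begin{enumerate}
\item Take $w\to\infty$ slowly and $\Lambda_w(n)=\frac{W}{\varphi(W)}\1_{(n,W)=1}$, and use $\norm{\Lambda_X^\sharp-\Lambda_w}_{U^s}=o(1)$, as in \cite[(9.17)]{MSTT}, to replace the model by $\Lambda_w$.
\item Split $\mathbf n$ into classes modulo $W$, so that each $\frac{\varphi(W)}{W}\Lambda(Wm+b)$ with $(b,W)=1$ is dominated by a genuinely pseudorandom measure.
\item Apply the von Neumann theorem there. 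The $W$-tricked Gowers norms cost only a factor $W^{O(1)}$, which is harmless for slowly growing $w$.
\end{enumerate}
The main term then becomes a trivial $W$-periodic count. Your fundamental-lemma computation with modulus $\Pcal(R)$ is correct, but it is no longer needed.

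Finally, the windows containing $\psi_i(\mathbf n)$ need not sit at height $\asymp X$. They can lie at any height $Y\ll X$, or straddle $0$ (e.g.\ $\psi(\mathbf n)=n_2-n_1$). So you need Theorem~\ref{thm:intro-uniformity} with base point $Y$, together with the comparison of $\Lambda_Y^\sharp$ and $\Lambda_X^\sharp$ through $\Lambda_w$, and the long-interval Green--Tao result when $Y\ll H$.
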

\begin{remark}
Note that for all primes $p$ one has
$\beta_p=1+O_{d,t,L}(p^{-2})$. Thus the Euler product in
\eqref{eq:intro-linear-theorem} converges absolutely. It is zero if and only if
one of the local factors $\beta_p$ vanishes.
\end{remark}

\subsection{Plan for the paper}
Using Heath-Brown's identity, the work of~\cite{MSTT} on the case of non-abelian nilmanifolds, and a standard Perron formula argument, we reduce our main task to bounding mean values of products of Dirichlet polynomials.

In Section~\ref{sec:LVMVestimates} we collect some known mean and large value results for Dirichlet polynomials. In Section~\ref{sec:oneVar} we reduce the task of estimating mean values of products of Dirichlet polynomials with given large value bounds to the task of showing that a piecewise linear function is positive. This reduction is directly adaptable to other contexts.

In Sections~\ref{sec:HBI} and~\ref{sec:BHP} we utilise the results of the two previous sections to prove new mean value estimates for products of three Dirichlet polynomials. The elementary proofs that the corresponding piecewise linear functions are positive are postponed to Section~\ref{sec:Comput}.

In Section~\ref{sec:uniformity} we prove Theorem~\ref{thm:intro-uniformity} through nilsequence discorrelation, and in Section~\ref{sec:mobius-short} we prove Theorem~\ref{thm:intro-mobius}, using Ramar\'e's identity.

\subsection{Limiting cases}
In the proofs of the above theorems, there are some limiting cases for the lengths of the Dirichlet polynomials arising from Heath-Brown's identity which force one to impose the restrictions $H\geq X^{3/5+\varepsilon}$ and $H\geq X^{19/35+\varepsilon}$ respectively, preventing any further improvement (apart from possibly improving on the $+\varepsilon$, in the spirit of Heath-Brown's work~\cite{H-B-o1}) without new arithmetic information. In the setting of Theorem \ref{thm:intro-uniformity}, the particular case in which there are four Dirichlet polynomials of length $X^{1/4+o(1)}$ entails bounding the large value set of the polynomials ``corresponding to the $7/10$-line'', and this does not seem to be tractable whenever $H\leq X^{3/5}$ with the current technology available. A similar situation occurs in the context of Theorems \ref{thm:intro-mobius}--\ref{thm1.5} for the case of five Dirichlet polynomials of length $X^{1/5+o(1)}$ and the large value set of the polynomials ``corresponding to the $5/7$-line'' (see Remarks \ref{rem4.3} and \ref{rem5.3} for a more technical discussion).

\subsection{Notation}\label{subsec:notation}

We use the usual Landau and Vinogradov notation $O(\cdot), o(\cdot), \ll, \gg, \asymp$. Implied constants may depend on fixed structural parameters such as the degree of the Gowers norms, nilmanifold degree and dimension, the number of forms in a linear system, and on $\eps>0$, but not on $X$ and $H$.

As in \cite[(1.4)]{MSTT}, for an interval $I\subset\R$ of length $\geq 1$ and a function $f$ on $I\cap\Z$, define the maximal progression norm
\begin{equation}\label{eq:starred-abs}
  \left\lvert\sum_{n\in I\cap\Z}f(n)\right\rvert^{\,*} \coloneqq \sup_{P\subseteq I\cap\Z}\left\lvert\sum_{n\in P}f(n)\right\rvert,
\end{equation}
where $P$ ranges over all arithmetic progressions contained in $I\cap\Z$.

As in \cite[Definition 2.1]{MSTT}, we define the total variation norm as follows. Let $P$ be an arithmetic progression and let $f\colon P\to\mathbb C$.
Define
\[
\|f\|_{\mathrm{TV}(P)}\coloneqq\sup_{n\in P}|f(n)|+\sup_{\substack{k\geq1\\n_1<\cdots<n_k\\n_1,\ldots,n_k\in P}}\sum_{j=1}^{k-1}|f(n_{j+1})-f(n_j)|.
\]
We set $\|f\|_{\mathrm{TV}(P)}=0$ when $P=\emptyset$.
If $q\geq1$ is an integer, we also write
\[
  \|f\|_{\mathrm{TV}(P;q)}\coloneqq\sum_{a\bmod q}\|f\|_{\mathrm{TV}(P\cap(a+q\mathbb Z))}.
\]

For an integer $s\geq1$ and a finitely supported function $f\colon\Z\to\C$, define the unnormalised Gowers norm by
\begin{equation}\label{eq:gowers-on-Z}
  \norm{f}_{U^s(\Z)} \coloneqq \left(\sum_{x,h_1,\ldots,h_s\in\Z} \prod_{\omega\in\{0,1\}^s}\mathcal C^{|\omega|} f(x+\omega_1h_1+\cdots+\omega_sh_s)\right)^{1/2^s},
\end{equation}
where $\mathcal C$ denotes complex conjugation and
$|\omega|=\omega_1+\cdots+\omega_s$. For a bounded interval
$I\subset\mathbb R$ containing an integer and an arbitrary function
$f\colon\Z\to\C$, set
\begin{equation}\label{eq:local-gowers-definition}
  \norm{f}_{U^s(I)} \coloneqq \frac{\norm{f\1_I}_{U^s(\Z)}}{\norm{\1_I}_{U^s(\Z)}}.
\end{equation}
Finally, we write $e(x)=e^{2\pi i x}$ for $x\in \mathbb{R}$. 

\subsection*{Acknowledgements}

KM was supported by the Research Council of Finland grants 364214 and 370133. JP was partially supported by the project PID2024-156180NB-I00 funded by MICINN-Spain. JT was supported by the European Union's Horizon Europe research and innovation programme under ERC grant agreement no. 101162746. MW was supported by the Swiss National Science Foundation grant TMSGI2-211214.

\subsection*{AI Declaration}

ChatGPT Pro was used to draft Figure~\ref{fig:typeII-region} and to suggest the idea of simplifying the three-variable computational check to a one-variable check as we now do in Lemma~\ref{le:inversecost}. It also helped with the proofs of the computational Lemmas~\ref{le:16/35comput} and~\ref{le:2/5comput} and with proofreading.

\section{Mean and large value results for Dirichlet polynomials} \label{sec:LVMVestimates}
In this section we collect mean and large value results for Dirichlet polynomials. Let $s\in\C$. We say that a Dirichlet polynomial
\[
  A(s) := \sum_{n \in \mathbb{N}} \frac{a(n)}{n^s}
\]
has length at most $N$ if the coefficients are supported on $[1,N]$. 

\begin{definition}
Let $\nu\geq 0$ and $T \geq 1$, and let $\mathcal{T} \subseteq [T, 2T]$ be a subset. Suppose that $A(s)$ is a Dirichlet polynomial whose coefficients are supported on $(X^\beta, 2X^{\beta}]$ with $\beta > 0$. We define $\mathcal{N}_A(\nu;\mathcal{T})$ to be the maximal cardinality of a $1$-separated subset $\mathcal{U}\subseteq\mathcal{T}$ such that
\begin{equation*}
  |A(1/2+it)|\geq X^{\beta(\frac{1}{2}-\nu)}\qquad\text{for all }t\in\mathcal{U}.
\end{equation*}
\end{definition}

For example, if $|a(n)|\leq 1$, then \cite[Theorem~1.1]{GuthMaynard} implies 
\[
\mathcal N_A(\nu;[T,2T])\ll T^{o(1)} (X^{2\beta \nu}+X^{\beta(4\nu-2/5)} + T X^{\beta(4\nu-8/5)}).
\]

We will bound $\mathcal{N}_A(\nu; \mathcal{T})$ by $X^{\mathcal{K}(\beta, \nu; \tau)+o(1)}$, where the function $\mathcal{K}(\beta, \nu; \tau)$ is defined as follows.
\begin{definition}\label{def:function-K}
  We define the functions $\mathcal{K}, \mathcal{K}_0 \colon \mathbb{R}_+ \times \mathbb{R}_{\geq 0} \times (0, 1] \to \mathbb{R}$ via
  \[
\mathcal{K}_0(\beta, \nu; \tau) := \min\left\{\tau+\beta(6\nu-2), \max\left\{\tau+\beta\left(4\nu-\frac{8}{5}\right), \frac{\tau}{2}+\beta(4\nu-1)\right\}, \tau+\beta(2\nu-1)\right\},
    \]
    and
    \[
      \mathcal{K}(\beta, \nu; \tau) := \min\left\{\tau, \max\left\{2\beta \nu, \mathcal{K}_0(\beta, \nu; \tau)\right\}\right\}.
      \]
\end{definition}

To provide some intuition, we point out that the three terms in $\mathcal K_0 (\beta, \nu; \tau)$ arise from the Hal\'asz--Montgomery--Huxley large values estimates, the Chen--Gupta--Li large values estimates~\cite{CGCL} (that refine Guth--Maynard's bound), and the diagonal contribution in the second moment estimate, respectively.

\begin{lemma}[Large values estimates] \label{le:MVHuGM}
Let $B \geq 1$ be fixed. Let $\beta>0$, $X\geq3$, and
\begin{equation*}
  A(s)=\sum_{X^\beta<n\leq 2X^\beta}a(n) n^{-s},\qquad |a(n)|\leq d_B(n)\log^B X.
\end{equation*}
Let $T = X^\tau$ with $\tau \in (0, 1]$.
Then the following statements hold:

\begin{enumerate}[label=(\roman*)]
  \item For every $\nu \geq 0$ and every fixed $k \in \mathbb{N}$, we have
\begin{equation}\label{eq:hm-large-values-bound}
  \mathcal N_A(\nu ; [T, 2T])\ll X^{\mathcal{K}(\beta k,\nu;\tau)+o(1)}.
\end{equation}

  \item There is a constant $C = C(B) > 0$ such that the following holds. Let 
  \[
  a(n) = \mathbf{1}_{n \in I} \chi(n) \log^r n,
  \]
   where $I \subseteq (X^\beta, 2X^\beta]$ is an interval, $\chi$ is a Dirichlet character of modulus $\ll \log^B X$ and $r \in \{0, 1\}$. Let $\mathcal{T} \subseteq [T, 2T]$ be such that
  \begin{equation} \label{eq:4thmomcond}
|A(1/2+it)| \geq \frac{X^{\beta/2}}{T} \log^C X
\end{equation}
for every $t \in \mathcal{T}$. Then, if $\tau>1/\sqrt{\log X}$, for every $\nu \geq 0$ we have
  \begin{equation} \label{eq:fourthMomentN}
    \mathcal{N}_A(\nu; \mathcal{T}) \ll X^{\tau + 2\beta(2\nu-1) +o(1)}.
  \end{equation}
\end{enumerate}
\end{lemma}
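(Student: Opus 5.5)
Part (i) follows by raising $A$ to the $k$-th power and then combining the standard large value estimates; part (ii) follows from a fourth moment (hybrid) bound for the smooth twisted polynomial.

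\emph{Part (i).} Put $A_k(s) := A(s)^k$. This is a Dirichlet polynomial supported on $(X^{\beta k}, 2^kX^{\beta k}]$, with coefficients bounded by $d_{kB}(n)\log^{kB}X \ll X^{o(1)}$. If $|A(1/2+it)|\geq X^{\beta(1/2-\nu)}$, then $|A_k(1/2+it)|\geq X^{\beta k(1/2-\nu)}$. Hence $\mathcal N_A(\nu;[T,2T])\leq \mathcal N_{A_k}(\nu;[T,2T])$, after splitting the support of $A_k$ into $O(1)$ dyadic pieces; this costs only $X^{o(1)}$ by pigeonholing which piece is large. It therefore suffices to prove \eqref{eq:hm-large-values-bound} for $k=1$, with $\beta$ replaced by $\beta k$ and $X^{o(1)}$-bounded coefficients. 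For a polynomial of length $N=X^{\beta}$ with large value threshold $V=N^{1/2-\nu}$, so that in the normalisation $\sum|a_n|^2\approx N$ one has $N^2/V^2 = N^{1+2\nu}\cdot N^{-1}$, I would quote three estimates:
\begin{itemize}
\item the trivial bound $\ll T$, which gives the first entry $\tau$ of the outer minimum;
\item the mean value theorem, giving $(T+N)N V^{-2} = X^{\tau+\beta(2\nu-1)} + X^{2\beta\nu}$;
\item Huxley's large values estimate, giving $N^{3}V^{-6}T \cdot N^{o(1)} + N V^{-2}$, i.e.\ $X^{\tau+\beta(6\nu-2)} + X^{2\beta\nu}$.
\end{itemize}
The Chen--Gupta--Li refinement of Guth--Maynard should give a bound of the shape $X^{2\beta\nu} + \max\{X^{\tau+\beta(4\nu-8/5)}, X^{\tau/2+\beta(4\nu-1)}\}$. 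I expect the main work to lie here: one must check in \cite{CGCL} that their bound is stated, or can be rewritten, in precisely this two-term form with the $N^{2}V^{-2}$-type term absorbed into $X^{2\beta\nu}$, and that it holds for all ranges of $T$ relative to $N$, not only $T\leq N^{O(1)}$. If it does not hold for $T$ much larger than $N$, I would first apply the standard subdivision of $[T,2T]$ into intervals of length $T_0$ and add the results; that recovers the factor $\tau$ linearly. Taking the minimum of the three bounds, each containing the common term $X^{2\beta\nu}$, gives $\max\{2\beta\nu,\mathcal K_0\}$, and combining with the trivial bound gives $\mathcal K$.

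\emph{Part (ii).} I would use a fourth moment estimate. By partial summation and Perron's formula or the approximate functional equation, $A(1/2+it)$ for $t\in[T,2T]$ equals an integral over a short $t$-range of $L(1/2+it',\chi)$ (or its derivative when $r=1$). The error is at most $X^{\beta/2}T^{-1}\log^{C'}X$, coming from the boundary terms of the truncated zeta/$L$-integral. By \eqref{eq:4thmomcond}, with $C$ larger than $C'$, this error is at most half of $|A(1/2+it)|$ on $\mathcal T$. So on $\mathcal T$ we have $|A(1/2+it)|\ll \log X\int_{|u|\leq\log^2X}|L(1/2+it+iu,\chi)|\frac{du}{1+|u|}$.

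A large value $X^{\beta(1/2-\nu)}$ of $A$ then forces $\int_{\text{near }t}|L|^4\gg X^{4\beta(1/2-\nu)-o(1)}$. Summing over the $1$-separated set and using the fourth moment $\int_T^{2T}|L(1/2+it,\chi)|^4\,dt\ll T^{1+o(1)}$ (with $q\ll\log^BX$) gives
\[
\mathcal N\ll T^{1+o(1)}X^{-2\beta(1-2\nu)} = X^{\tau+2\beta(2\nu-1)+o(1)},
\]
as required. The condition $\tau>1/\sqrt{\log X}$ ensures that $\log$ powers are $T^{o(1)}$-compatible and the $L$-function moment applies.
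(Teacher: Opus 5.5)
Your overall route is the paper's. In (i) you reduce to $k=1$ via $A^k$ and dyadic pigeonholing, then take the minimum of the trivial, mean value, Huxley and Guth--Maynard/Chen--Gupta--Li bounds. In (ii) you write $A$ via Perron in terms of $L(s,\chi)$, use the fourth moment of $L$, and use \eqref{eq:4thmomcond} to neutralise the $O(X^{\beta/2}/T)$ error. Your intermediate mean value and Huxley expressions mix normalisations (with $|a_n|\le 1$ and $V=N^{1-\nu}$, Huxley reads $N^2V^{-2}+N^4TV^{-6}$), but the exponents you end with are correct. Two steps, however, do not work as written.

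\textbf{Part (i).} You leave open exactly the point that needs an argument. The paper applies \cite[Theorem~1.1]{CGCL} only when $N\geq(2T)^{2/3}$, which is guaranteed by $\beta\geq 5\tau/6$. Your fallback of cutting $[T,2T]$ into blocks of length $T_0\leq N^{3/2}$ and summing does not ``recover $\tau$ linearly'' for the whole bound. It also multiplies the diagonal term, giving $(T/T_0)X^{2\beta\nu}$ rather than $X^{2\beta\nu}$. For example, with $T=N^2$ and $\nu=1/100$ this term is at least $N^{0.52}$, whereas the shape you want is $N^{0.02}+\max\{N^{0.44},N^{0.04}\}$. The final $\mathcal K$ could still be rescued, since wherever this loss matters Huxley's term is already smaller, but you would have to argue that. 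The simple fix, used in the paper, is this: for $\beta\leq 5\tau/6$, apply \cite[Theorem~1.1]{GuthMaynard} directly. It has no such range restriction and gives $X^{2\beta\nu}+X^{\tau+\beta(4\nu-8/5)}+X^{\beta(4\nu-2/5)}$, and $\beta(4\nu-2/5)\leq\tau/2+\beta(4\nu-1)$ holds precisely when $\beta\leq5\tau/6$.

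\textbf{Part (ii).} The short-range representation $|A(1/2+it)|\ll\log X\int_{|u|\leq\log^2X}|L(1/2+it+iu,\chi)|\frac{\dd u}{1+|u|}$ with error $X^{\beta/2}T^{-1}\log^{C'}X$ is not what Perron's formula or the approximate functional equation gives. Because $\mathbf 1_I$ is a sharp cutoff, the standard Perron truncation error at height $U$ is $O(N^{1/2}\log X/U)$, so you need $U\gg T$. Moreover, when $T$ is much larger than $N$, $A(1/2+it)$ is essentially a dual polynomial of length $\asymp qT/N$ by the approximate functional equation, not a local average of $L$ near $t$. This is the case that matters in the application, e.g.\ $\beta_1=1/5$ with $\tau$ near $16/35$. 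What one actually has, for $r=0$ (partial summation handles $r=1$), taking $U=4T$ and shifting to the half-line, is
\[
|A(1/2+it)|\ll\int_{|u|\leq 4T}\bigl|L(1/2+it+iu,\chi)\bigr|\min\{1,|u|^{-1}\}\dd u+O\Bigl(\frac{N^{1/2}}{T}\log^{O(1)}X+T^{-3/4}\log^{O(1)}X\Bigr).
\]
The $N^{1/2}/T$ term now also contains the residue at the pole of $L(s,\chi_0)$, which is crossed. Hölder's inequality with weight $(1+|u|)^{-1}$ and $\sum_{t\in\mathcal U}\min\{1,|y-t|^{-1}\}\ll\log T$ then bound the fourth power of the main term, summed over $\mathcal U$, by $\log^{O(1)}X\int_{-6T}^{6T}|L(1/2+iy,\chi)|^4\dd y\ll T\log^{O(1)}X$. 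This is the content of \eqref{eq:fourth-moment-sublemma}, which the paper takes from the proof of \cite[Lemma~9]{BakerHarmanPintz1997}.

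With this correction your argument goes through. Absorbing the $N^{1/2}/T$ term pointwise via \eqref{eq:4thmomcond} is a valid alternative to the paper's device of replacing $\nu$ by $\min\{\nu,\nu_1\}$. The $T^{-3/4}$ term is absorbed because it is $o(1)$ when $T\geq\exp(\sqrt{\log X})$, while $|A|\geq N^{1/2-\nu}\geq 1$ for $\nu<1/2$ (the case $\nu\geq1/2$ is trivial). That is where $\tau>1/\sqrt{\log X}$ is genuinely used.
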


\begin{proof}
  \textbf{Proof of (i).} We first note that it suffices to prove
  \begin{equation} \label{eq:LVk=1}
    \mathcal{N}_A(\nu; [T, 2T])
    \ll X^{\mathcal{K}(\beta, \nu; \tau) +o(1)}.
  \end{equation}
  Indeed, let $k\in\mathbb N$ be fixed and write
  \[
    A(s)^k=\sum_{n\in \mathbb{N}}\frac{c_k(n)}{n^s}.
  \]
  The coefficients are supported on $N^k<n\leq2^kN^k$, where
  $N=X^\beta$, and satisfy
  \[
    |c_k(n)|\leq d_{Bk}(n) \log^{Bk} X.
  \]
  Split
  \[
    A(s)^k=\sum_{\ell=0}^{k-1}A_{k,\ell}(s),\qquad  A_{k,\ell}(s)=\sum_{2^\ell N^k<n\leq2^{\ell+1}N^k}\frac{c_k(n)}{n^s}.
  \]
  Let $\mathcal U\subseteq[T,2T]$ be a $1$-separated set on which
  \[
    |A(1/2+it)|\geq X^{\beta(1/2-\nu)}.
  \]
  Then, for every $t\in\mathcal U$, at least one
  $\ell\in\{0,\ldots,k-1\}$ satisfies
\[
|A_{k,\ell}(1/2+it)|\geq \frac1k X^{k\beta(1/2-\nu)}.
\]
  We partition $\mathcal U$ according to the least such $\ell$. Since
  $2^{\ell+1}N^k=X^{k\beta+O_k(1/\log X)}$, applying
  \eqref{eq:LVk=1} to each $A_{k,\ell}$, with the parameters changed
  by $O_{B,k,\beta,\nu}(1/\log X)$, gives
\[
    \#\mathcal U\ll_{B,k}X^{\mathcal K(k\beta,\nu;\tau)+o(1)},
\]
  where we used that $\mathcal K$ is piecewise linear. This proves the
  reduction to \eqref{eq:LVk=1}.

     We now turn to the proof of \eqref{eq:LVk=1}. Recall that $T = X^\tau$. If $\mathcal{K}(\beta, \nu; \tau) = \tau$, then $X^{\mathcal{K}(\beta, \nu; \tau)} = X^\tau = T$ and the claim is trivial. We may therefore assume henceforth that
      \begin{equation} \label{eq:Ksimp}
         \mathcal{K}(\beta, \nu; \tau) = \max\left\{2\beta \nu, \mathcal{K}_0(\beta, \nu; \tau)\right\} \leq \tau.
       \end{equation}
       
Set $N=X^\beta$ and $V=N^{1/2-\nu}$, and let
$\mathcal U\subseteq[T,2T]$ be any $1$-separated set such that
\[
  |A(1/2+it)|\geq V\qquad\text{for }t\in\mathcal U.
\]
Write
$$
  G\coloneqq \sum_{N<n\leq2N}\frac{|a(n)|^2}{n}\ll \log^{2B} X N^{-1}\sum_{N<n \leq 2N} d_B(n)^2\ll \log^{O_B(1)} X.
$$
By the second moment large values theorem \cite[Theorem~9.7]{IwaniecKowalski}, one has
\begin{equation}\label{eq:IK-thm-97-use}
  \#\mathcal{U}\ll (N+T)GV^{-2}\log(2N) \ll (X^{2\beta\nu}+X^{\tau+\beta(2\nu-1)})X^{o(1)}.
\end{equation}
By Huxley's large values estimate in the form of \cite[Corollary~9.9]{IwaniecKowalski}, one has
\begin{equation}\label{eq:huxley-use}
  \#\mathcal{U}\ll \left(GNV^{-2}+G^3N TV^{-6}\right) \log^6 (2T) \ll (X^{2\beta \nu}+X^{\tau+\beta(6\nu-2)})X^{o(1)}.
\end{equation}

Next we apply the large value results of Guth and
Maynard~\cite{GuthMaynard}, together with the refinement due to Chen,
Gupta, and Li~\cite{CGCL}. Let $C_B$ be sufficiently large and set
\[
  A^\ast(it)\coloneqq N^{1/2-C_B/\log\log N}A(1/2+it)=\sum_{N<n\leq2N}b(n)n^{-it},
\]
where
\[
  b(n)=\frac{a(n)}{N^{C_B/\log\log N}}\left(\frac Nn\right)^{1/2}.
\]
The standard bound for the divisor function shows that if $C_B$ is large enough, then $|b(n)|\leq1$ for all $n$. Moreover, for
$t\in\mathcal U$,
\[
  |A^\ast(it)|\geq V^\ast,\qquad V^\ast\coloneqq N^{1-\nu-C_B/\log\log N}.
\]

Suppose first that $\beta\geq5\tau/6$. Then, for all sufficiently
large $X$,
\[
  N=X^\beta\geq(2T)^{2/3}.
\]
We may therefore apply
\cite[Theorem~1.1]{CGCL}, with $q=q_1=1$ and height $2T$, to the
points $-t$ with $t\in\mathcal U$. It gives
\[
  \#\mathcal U\ll (2T)^{o(1)}\left(N^2(V^\ast)^{-2}+(2T)^{1/2}N^3(V^\ast)^{-4}+2TN^2(V^\ast)^{-4}+2TN^{12/5}(V^\ast)^{-4}
  \right).
\]
The third term is dominated by the fourth. Since
$V^\ast=N^{1-\nu+o(1)}$, we obtain
\begin{equation} \label{eq:TboundGMbetasmall}
  \# \mathcal{U}\ll\left(X^{2\beta\nu}+X^{\tau/2+\beta(4\nu-1)}+X^{\tau+4\beta\nu-8\beta/5}\right)X^{o(1)}.
\end{equation}
If instead $\beta \leq 5\tau/6$, we have from \cite[Theorem 1.1]{GuthMaynard} with $V=N^{1-\nu+o(1)}$ that
\[
\# \mathcal U\ll \big(X^{2\beta \nu}+ X^{\tau +\beta(4\nu-8/5)} + X^{\beta(4\nu-2/5)}\big) X^{o(1)}.
\]
Noting that $\beta (4\nu-2/5) \leq \tau/2 +4\beta \nu -\beta$ whenever $\beta \leq 5\tau/6$, we can also conclude estimate~\eqref{eq:TboundGMbetasmall} in this case.
 
Now~\eqref{eq:LVk=1} follows by comparing the estimates~\eqref{eq:IK-thm-97-use},~\eqref{eq:huxley-use}, and~\eqref{eq:TboundGMbetasmall} with~\eqref{eq:Ksimp} and the definition of $\mathcal{K}_0(\beta, \nu; \tau)$. 

\textbf{Proof of~(ii).} Now we assume that $a(n) = \mathbf{1}_{n \in I} \chi(n) \log^r n$ for some Dirichlet character $\chi$ of modulus $\ll \log^{O(1)} X$, some interval $I \subseteq (X^\beta, 2X^\beta]$, and some $r \in \{0, 1\}$.

Write $N=X^\beta$. Perron's formula and the fourth moment estimate for Dirichlet $L$-functions give that there is a constant
$C_0=C_0(B)>0$ such that, for every $1$-separated set
$\mathcal U\subseteq[T,2T]$,
\begin{equation}\label{eq:fourth-moment-sublemma}
 \sum_{t\in\mathcal U}|A(1/2+it)|^4
 \ll \left(T+\#\mathcal U\left(\frac{1}{T^2}+\frac{N^2}{T^4}\right)
 \right)\log^{C_0} X;
\end{equation}
this follows from the proof of
\cite[Lemma~9]{BakerHarmanPintz1997} and partial summation to remove
the coefficients $\log^r n$.

Let $\nu\geq0$ be arbitrary. Let $\mathcal U$ be a $1$-separated subset of
\[
 \left\{t\in\mathcal T:
 |A(1/2+it)|\geq N^{1/2-\nu}\right\}.
\]
We choose the constant $C=C(B)$ in \eqref{eq:4thmomcond} so that
$4C>C_0+10$. If $\nu\geq1/2$, the trivial bound
$\#\mathcal U\ll T$ for a $1$-separated subset of $[T,2T]$ gives
\[
 \mathcal N_A(\nu;\mathcal T)
 \ll T
 \leq X^{\tau+2\beta(2\nu-1)},
\]
so we may suppose that $\nu<1/2$.

The definition of $\mathcal U$ and
\eqref{eq:fourth-moment-sublemma} give
\begin{align}
 \#\mathcal U
 &\ll
 N^{4\nu-2}T\log^{C_0} X +
 \#\mathcal U
 \left(
 N^{4\nu-2}T^{-2}+N^{4\nu}T^{-4}
 \right)\log^{C_0} X.
 \label{eq:fourth-moment-absorption}
\end{align}
Since $\nu<1/2$ and $\tau > 1/\sqrt{\log X}$, we have
\begin{equation} \label{eq:fthtriv}
 N^{4\nu-2}T^{-2}\log^{C_0} X=o(1).
\end{equation}

Now, let $\nu_1$ be defined by
\[
 N^{1/2-\nu_1}=\frac{N^{1/2}}{T}\log^C X.
\]
The hypothesis $\tau>1/\sqrt{\log X}$ gives, for all sufficiently
large $X$,
\[
 \nu_1
 =\frac{\tau\log X-C\log\log X}{\beta\log X}
 >0.
\]
If $\nu_1\leq\nu$, assumption~\eqref{eq:4thmomcond} shows that every
$t\in\mathcal T$ belongs to the level sets corresponding to both
$\nu_1$ and $\nu$. Hence
\[
 \mathcal N_A(\nu;\mathcal T)
 =
 \mathcal N_A(\nu_1;\mathcal T),
\]
and it is enough to prove~\eqref{eq:fourthMomentN} with $\nu_1$ in
place of $\nu$. We may therefore replace $\nu$ by $\nu_1$ in this
case and hence assume that $\nu\leq\nu_1$. It follows that
\[
 N^{4\nu}T^{-4}\leq N^{4\nu_1}T^{-4}= \frac{1}{\log^{4C} X}.
\]
Combining with~\eqref{eq:fourth-moment-absorption} and~\eqref{eq:fthtriv} we obtain
\[
 \#\mathcal U
 \ll
 N^{4\nu-2}T\log^{C_0} X + o(\#\mathcal U)
 =
 X^{\tau+2\beta(2\nu-1)+o(1)} + o(\#\mathcal U),
\]
and~\eqref{eq:fourthMomentN} follows. This completes the proof of the lemma.
\end{proof}

We shall also require the following second moment estimate for sparse sets in Section 3.

\begin{lemma}[Sparse mean square]\label{lem:sparse-mean-square}
Let $N, T \geq 2$ and
$$
  A(s)=\sum_{N<n\leq2N}a(n)n^{-s}
$$
with $a(n) \in \mathbb{C}$. Let $\mathcal{U}\subset[-T,T]$ be $1$-separated. Then, for $\eta \in (0, 1/2]$,
\begin{equation*}
  \sum_{t\in\mathcal{U}}|A(it)|^2
  \ll_\eta
  \left(N+|\mathcal{U}| \cdot T^{\frac{9}{2} \cdot \eta^{3/2}} N^{1-\eta}\log^{2/3} T\right) \sum_{N < n \leq 2N} |a(n)|^2.
\end{equation*}
\end{lemma}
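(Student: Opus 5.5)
The plan is to combine the duality form of the Halász--Montgomery inequality with a Vinogradov--Korobov bound for the zeta sums $\sum n^{iu}$. Fix a smooth weight $g\ge0$ with $g\geq 1$ on $[1,2]$ and support in $[1/2,3]$. By duality and Cauchy--Schwarz, with $b(n)$ the dual coefficients, we have
\[
\sum_{t\in\mathcal U}|A(it)|^2\ll \Big(\sum_{N<n\le 2N}|a(n)|^2\Big)\max_{t\in\mathcal U}\sum_{t'\in\mathcal U}\Big|\sum_n g(n/N)\,n^{i(t'-t)}\Big| .
\]
Set $S(u):=\sum_n g(n/N)n^{iu}$. The diagonal term $t=t'$ gives $S(0)\ll N$. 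It then suffices to show, for $1\le|u|\le 2T$, that
\[
S(u)\ll N(1+|u|)^{-2}+T^{\frac92\eta^{3/2}}N^{1-\eta}\log^{2/3}T.
\]
Once this is established, summing over the $1$-separated $t'$ gives $N\sum_{k\ge1}k^{-2}\ll N$ from the first term, and $|\mathcal U|T^{\frac92\eta^{3/2}}N^{1-\eta}\log^{2/3}T$ from the second, which is exactly the claimed bound.

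For small $|u|$, say $|u|\le N^{1-\eta}$, I will use Euler--Maclaurin (or Poisson summation). This gives
\[
S(u)=N^{1+iu}\int g(x)x^{iu}\,dx+O(1+|u|).
\]
Integrating by parts repeatedly in $x^{iu}=e^{iu\log x}$ shows that the integral is $\ll_k |u|^{-k}$. The error term $O(|u|)$ is then $\le N^{1-\eta}$, which is acceptable. For $|u|\le 1$ nothing is needed beyond the diagonal, since then $t'=t$.

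For $N^{1-\eta}<|u|\le 2T$ (only relevant when $N<T^{O(1)}$; otherwise the previous paragraph already covers everything), I remove $g$ by partial summation. I then invoke an explicit Vinogradov--Korobov bound, for instance Ford's estimate
\[
\Big|\sum_{N<n\le N'}n^{iu}\Big|\ll N^{1-c/\lambda^2}\qquad\text{with } \lambda=\frac{\log |u|}{\log N}\ge1 \text{ and } N'\le 2N,
\]
together with its variant carrying the $\log^{2/3}$ factor. Writing $L=\log N$, $\ell=\log T$ and $x=L/\ell$, the saving $N^{-c\lambda^{-2}}\le N^{-cx^2}$ dominates $N^{-\eta}T^{C\eta^{3/2}}$ as soon as
\[
cx^3-\eta x+C\eta^{3/2}\ge0 \quad\text{for all } x>0.
\]
Minimising at $x=\sqrt{\eta/(3c)}$ shows that this holds once $C\ge 2/(3\sqrt{3c})$. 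This is the source of the shape $T^{C\eta^{3/2}}N^{1-\eta}$, and the constant $9/2$ should come from the specific value of $c$ in the explicit estimate. When $\lambda<1$, i.e. $|u|<N$, the exponent is trivially favourable, or such $u$ are already covered by the first-derivative bound.

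The main obstacle is getting the precise constant $9/2$ and the factor $\log^{2/3}T$. This requires choosing the right explicit form of the Vinogradov--Korobov estimate, ideally one of the shape
\[
\sum n^{iu}\ll N\exp\!\big(-c\log^3N/\log^2|u|\big)\log^{2/3}|u|,
\]
and verifying that the numerical constants match. The rest of the argument is routine.
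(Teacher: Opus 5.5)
Your argument is correct, but it is organised differently from the paper's. The paper never estimates $S(u)=\sum_n w(n/N)n^{iu}$ range by range. It writes $S(u)$ by Mellin inversion as a contour integral of $\widetilde w(z)N^z\zeta(z-iu)$ and shifts to $\Re z=1-\eta$. This gives $S(u)=\widetilde w(1+iu)N^{1+iu}+O_\eta\bigl(N^{1-\eta}(1+|u|)^{\frac92\eta^{3/2}}\log^{2/3}(2+|u|)\bigr)$, where the error term comes directly from Ford's zeta bound $|\zeta(\sigma+it)|\le 100(|t|+1)^{\frac92(1-\sigma)^{3/2}}\log^{2/3}(|t|+2)$ at $\sigma=1-\eta\ge 1/2$. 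The residue decays rapidly in $u$, which gives the $N$ term after summing over the $1$-separated points. Both the constant $9/2$ and the factor $\log^{2/3}T$ are simply inherited from Ford's Theorem~1, and there is no case split in $u$.

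You instead split into three ranges. You treat $|u|\le N^{1-\eta}$ by Euler--Maclaurin, and $N^{1-\eta}<|u|<N$ by Kusmin--Landau, which gives $N/|u|\le N^{\eta}\le N^{1-\eta}$; this is where you need $\eta\le 1/2$. For $|u|\ge N$ you use Ford's exponential-sum estimate together with your cubic optimisation. That last step amounts to re-deriving the single-block case of Ford's zeta bound from his exponential-sum theorem. So your route is more hands-on, but it yields slightly more.

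In particular, what you call the main obstacle is not one. Since $T\ge 2$, any exponent $C\le 9/2$ is acceptable. Since only one dyadic block occurs, you do not need the $\log^{2/3}$ factor at all; in Ford's zeta bound it arises from summing over dyadic ranges of $n$, and a bound without it is stronger.

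You need $2/(3\sqrt{3c})\le 9/2$, i.e.\ $c\ge 16/2187\approx 1/136.7$. Ford's Theorem~2 gives $\sum_{N<n\le N'}n^{-it}\ll N^{1-1/(133.66\lambda^2)}$ for $N\le t$ and $N'\le 2N$, so it satisfies this: $2/(3\sqrt{3/133.66})\approx 4.45$, which is exactly his $B$.

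Finally, your inequality $N^{-c\lambda^{-2}}\le N^{-cx^2}$ with $x=\log N/\log T$ goes the wrong way, because $\lambda$ can be as large as $\log(2T)/\log N$. The fix is to run the optimisation with $\lambda=\log|u|/\log N$ directly. Then the $2T$ versus $T$ bookkeeping disappears, since $|u|^{C\eta^{3/2}}\le (2T)^{\frac92\eta^{3/2}}\ll T^{\frac92\eta^{3/2}}$.
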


\begin{proof}
  The proof is a slight variant of the proof of the Hal\'asz--Montgomery mean value theorem (see e.g.~\cite[Theorem 9.6]{IwaniecKowalski}). For a similar variant for prime-supported Dirichlet polynomials, see~\cite[Lemma 4.4]{MRII}.

  By the duality principle (see e.g.~\cite[Section 7.1]{IwaniecKowalski}), it suffices to show that, for any complex coefficients $c(t)$, one has
\begin{equation}\label{eq:sparse-mean-square-discrete2}
\sum_{N < n \leq 2N}\left| \sum_{t\in\mathcal{U}}c(t) n^{-it}\right|^2 \ll \left(N+|\mathcal{U}| T^{\frac{9}{2}\eta^{3/2}} N^{1-\eta}\log^{2/3} T\right) \sum_{t \in \mathcal{U}} |c(t)|^2.
\end{equation}
Let $w$ be a smooth non-negative compactly supported function such that $w(x) = 1$ for $x \in [1,2]$. Its Mellin transform
\[
  \widetilde{w}(z)=\int_0^\infty w(x)x^{z-1}\dd x
\]
satisfies, for $1/2\leq\sigma\leq2$,
\begin{equation}\label{eq:mellin-decay-sparse}
  \widetilde w(\sigma+it)\ll(1+|t|)^{-10}.
\end{equation}

By inserting this smooth weight $w$, we have
\begin{align} \label{eq:HM1ststep}
\sum_{N < n \leq 2N}\left| \sum_{t\in\mathcal{U}}c(t) n^{-it}\right|^2 \leq \sum_{n} w\left(\frac{n}{N} \right)\left| \sum_{t\in\mathcal{U}}c(t) n^{-it}\right|^2 = \sum_{t_1, t_2 \in \mathcal{U}} c(t_1)\overline{c(t_2)} \sum_{n} w\left(\frac{n}{N} \right) n^{-i(t_1-t_2)}.
\end{align}
For the inner summation, Mellin inversion on a line $c>1$ gives
\[
w\left(\frac{n}{N} \right) =\frac{1}{2\pi i} \int_{c-i\infty}^{c+i\infty} \widetilde w (z) N^zn^{-z} \dd z.
\]
Hence
\[
  \sum_{n} w\left(\frac{n}{N} \right) n^{-i(t_1-t_2)} =\frac1{2\pi i}\int_{c-i\infty}^{c+i\infty}\widetilde w(z)N^z\zeta(z+i(t_1-t_2))\dd z.
\]
Moving the contour to $\Re(z)=1-\eta$ crosses a pole at $z=1+i(t_2-t_1)$. This gives
\[
 \sum_{n} w\left(\frac{n}{N} \right) n^{-i(t_1-t_2)} = \widetilde w(1+i (t_2-t_1)) N^{1+ i(t_2-t_1)} +\frac{1}{2\pi i} \int_{1-\eta-i\infty}^{1-\eta+i\infty} \widetilde w(z)N^z\zeta(z+i(t_1-t_2))\dd z.
\]
We bound the integral using \eqref{eq:mellin-decay-sparse} and a result of Ford~\cite[Theorem 1]{Ford02}, which for $|t|\geq3$ and $\sigma \in [1/2, 1]$ gives
\[
  |\zeta(\sigma+it)| \leq 100 (|t|+1)^{\frac{9}{2}(1-\sigma)^{3/2}} \log^{2/3} (|t|+2).
  \]
For $|t|\leq3$, we instead use
$|\zeta(1-\eta+it)|\ll_\eta 1$. This leads to
\begin{align*}
 &\left|
 \int_{1-\eta-i\infty}^{1-\eta+i\infty}
 \widetilde w(z)N^z\zeta(z+i(t_1-t_2))\dd z
 \right|\\
 &\leq N^{1-\eta}\int_{-\infty}^{\infty}
 |\widetilde w(1-\eta+iu)|
 |\zeta(1-\eta+i(u+t_1-t_2))|\,\dd u\\
 &\ll_\eta N^{1-\eta}\int_{-\infty}^{\infty}
 \frac{(|u+t_1-t_2|+1)^{\frac92\eta^{3/2}}
 \log^{2/3}(|u+t_1-t_2|+2)}
 {(1+|u|)^{10}}\,\dd u\\
 &\ll_\eta N^{1-\eta}
 (1+|t_1|+|t_2|)^{\frac92\eta^{3/2}}
 \log^{2/3}(2+|t_1|+|t_2|).
\end{align*}
Thus, one concludes from~\eqref{eq:HM1ststep} and the above estimates that
\begin{align*}
\sum_{N < n \leq 2N}\left| \sum_{t\in\mathcal{U}}c(t) n^{-it}\right|^2 &\ll N \sum_{t_1,t_2\in \mathcal U} |c(t_1) \overline{c(t_2)}| \cdot|\widetilde w(1+i (t_2-t_1))| \\
&+ N^{1-\eta} \sum_{t_1,t_2\in \mathcal U} |c(t_1)\overline{c(t_2)}| (1+|t_1|+|t_2|)^{\frac{9}{2}\eta^{3/2}} \log^{2/3}(2+|t_1|+|t_2|).
\end{align*}
The claim~\eqref{eq:sparse-mean-square-discrete2} now follows from the inequality $|c(t_1)\overline{c(t_2)}| \leq (|c(t_1)|^2+|c(t_2)|^2)/2$, the rapid decay of $\widetilde{w}$ (see~\eqref{eq:mellin-decay-sparse}) and the assumption that $\mathcal{U}$ is $1$-separated.
\end{proof}

\section{Bounding Dirichlet polynomial mean values via a one-variable check} \label{sec:oneVar}
The following simple lemma allows us to reduce a multivariable check to a one-variable check. There and later we use the convention that the infimum of an empty set is $\infty$.
\begin{lemma} \label{le:inversecost}
 Let $\tau>0$, $\varepsilon_0 >0$, $J\in \mathbb N$ and $I \subseteq \mathbb{R}_{\geq 0}$. For each $j \in \{1, \dotsc, J\}$, let $\beta_j \geq 0$, let $H_j \colon I \to [0, \tau],$ and define
  \[
    P_j(t) = \inf\{\beta_j y \colon y \in I, H_j(y) \geq t\}.
  \]
  Assume that
  \begin{equation} \label{eq:SumPjbound}
    \sum_{j = 1}^J P_j(t) > t+\varepsilon_0 \qquad \text{ for all } t\in[0,\tau].
  \end{equation}
  Then, for arbitrary $y_1, \dotsc, y_J \in I$ one has
  \begin{equation} \label{eq:minHjbound}
    \min_{j \in \{1, \dotsc, J\}} H_j(y_j) < \sum_{j = 1}^J \beta_j y_j -\varepsilon_0.
  \end{equation}
  \end{lemma}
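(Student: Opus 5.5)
The natural route is a single choice of threshold plus the definition of $P_j$ as an infimum; no real case analysis is needed. Fix $y_1,\dots,y_J\in I$ and set
\[
t:=\min_{j\in\{1,\dots,J\}} H_j(y_j).
\]
Since each $H_j$ takes values in $[0,\tau]$, we have $t\in[0,\tau]$, so hypothesis~\eqref{eq:SumPjbound} applies at this value of $t$ and gives $\sum_j P_j(t)>t+\varepsilon_0$.

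Next, for each $j$ we compare $P_j(t)$ with $\beta_j y_j$. By the choice of $t$ we have $H_j(y_j)\geq t$, so $y_j$ is an admissible element of the set $\{y\in I\colon H_j(y)\geq t\}$ over which the infimum defining $P_j(t)$ is taken. In particular that set is nonempty, so the convention $\inf\emptyset=\infty$ never intervenes, and
\[
P_j(t)\leq \beta_j y_j \qquad (1\leq j\leq J).
\]
Summing over $j$ and combining with the hypothesis yields
\[
t+\varepsilon_0<\sum_{j=1}^J P_j(t)\leq \sum_{j=1}^J\beta_j y_j,
\]
which rearranges to $\min_j H_j(y_j)=t<\sum_j\beta_j y_j-\varepsilon_0$, i.e.\ \eqref{eq:minHjbound}.

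The only point needing care is that $t$ lies in the range $[0,\tau]$ where \eqref{eq:SumPjbound} is assumed, and this follows from the codomain of the $H_j$. I expect no genuine obstacle: the content of the lemma is simply that $P_j$ acts as an ``inverse'' of $H_j$, so the multivariable inequality is reduced to evaluating the one-variable condition at the single point $t=\min_j H_j(y_j)$.
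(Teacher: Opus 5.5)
Your proof is correct and is essentially the paper's argument. The paper argues by contradiction with an intermediate value $t_0\in[0,\tau]$ satisfying $\sum_j\beta_jy_j-\varepsilon_0\leq t_0\leq\min_jH_j(y_j)$; you make the same comparison $P_j(t)\leq\beta_jy_j$ directly at $t=\min_jH_j(y_j)$, which is slightly cleaner.
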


\begin{proof}
We argue by contradiction and assume that~\eqref{eq:minHjbound} fails. Thus there exist $y_1, \dotsc, y_J \in I$ and $t_0 \in \mathbb{R}$ such that
  \[
     \min_{j \in \{1, \dotsc, J\}} H_j(y_j) \geq t_0 \geq \sum_{j = 1}^J \beta_j y_j -\varepsilon_0.
\]
    By the range of $H_j$, we can assume that $t_0 \in [0, \tau].$ For each $j \in \{1, \dotsc, J\}$, by the definition of $P_j(t)$, the inequality $H_j(y_j) \geq t_0$ implies that $P_j(t_0) \leq \beta_j y_j$. Thus
    \[
     t_0 \geq \sum_{j = 1}^J \beta_j y_j -\varepsilon_0 \geq \sum_{j=1}^J P_j(t_0) - \varepsilon_0.
    \]
    Consequently, the assumption~\eqref{eq:SumPjbound} fails, and the claim follows.
  \end{proof}

The following proposition reduces the task of bounding a mean value of a product of Dirichlet polynomials using large value estimates to a one-variable check. In the following two sections we shall prove two applications of Proposition~\ref{prop:one-var-check} and the estimates in Section~\ref{sec:LVMVestimates}. These applications are tailored for our needs, and one could similarly prove variants of these results for other needs.

  \begin{proposition} \label{prop:one-var-check} 
  Let $\varepsilon > 0$ be small but fixed and let $B \geq 1$. Let $X \geq 2,\, T = X^\tau$ with $\tau \in (0, 1/2)$ and let $\mathcal{T} \subseteq [T, 2T]$. Let $J \geq 3$ be a fixed integer and let $\beta_j \geq \varepsilon$ for $j = 1, \dotsc, J$ be numbers satisfying $\beta_1 + \dotsb + \beta_J = 1$. For $1\leq j\leq J$, let
$$
  A_j(s)=\sum_{X^{\beta_j} <n\leq2X^{\beta_j}} a_{j}(n) n^{-s},
$$
be a Dirichlet polynomial for which $a_j(n) \ll d_B(n) \log^B X$ for all $n$. Suppose that there exists $j_0 \in \{1, \dotsc, J\}$ such that
  \[
   \sup_{t \in \mathcal{T}} \big|A_{j_0}(1/2+it) \big| \ll_C \frac{X^{\beta_{j_0}/2}}{\log^C X}
    \]
for every $C \geq 1.$

For each $1\leq j\leq J$, suppose that the function $H_j\colon\mathbb R_{\geq0}\to[0,\tau]$ is such that
\begin{equation} \label{eq:NAjbound}
  \mathcal N_{A_j}(\nu;\mathcal T)
  \leq X^{H_j(\nu)+\varepsilon^{20}}
\end{equation}
for all $\nu\in[\varepsilon^6,\varepsilon^{-1}]$.
Define the functions $P_j\colon[0,\tau]\to[0,\infty]$ via
\[
P_{j}(t) = \inf\{\beta_j \nu \colon \nu \in [\varepsilon^6, \varepsilon^{-1}], H_j(\nu) \geq t\}.
\]
Assume that
\[
\sum_{j = 1}^J P_j(t) > t+\varepsilon^{10} \qquad\text{ for every } t \in [0, \tau].
 \]
Then
\begin{equation*}
\int_{\mathcal{T}} |A_1(1/2+it)A_2(1/2+it) \dotsm A_J(1/2+it)|\dd t\ll_{\varepsilon, A, B} \frac{X^{1/2}}{\log^A X}
\end{equation*}
for every $A>0$.
\end{proposition}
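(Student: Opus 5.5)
We will prove the bound by a dyadic decomposition of $\mathcal{T}$ according to the sizes of the $J$ polynomials, and then use Lemma~\ref{le:inversecost} to show that every resulting piece is small. First we handle values of $t$ where some $A_j$ is very small. Set $N_j=X^{\beta_j}$. By the triangle inequality, $\sup_t|A_j(1/2+it)|\ll N_j^{1/2}\log^{O_B(1)}X$ for every $j$. Let $\mathcal{T}_0\subseteq\mathcal{T}$ be the set where $|A_j(1/2+it)|\leq N_j^{1/2-\varepsilon^{-1}}$ for some $j$. On $\mathcal{T}_0$ the product is at most $X^{1/2-\varepsilon^{-1}\beta_j+o(1)}\leq X^{1/2-1+o(1)}$, because $\beta_j\geq\varepsilon$. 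Integrating over a set of measure $\ll T\leq X^{1/2}$ then gives a contribution $\ll X^{1/2-1/2+o(1)}$, which is acceptable.

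On the remaining $t$ we partition dyadically. For each $j$, write $|A_j(1/2+it)|\in(N_j^{1/2-\nu_j-\delta},N_j^{1/2-\nu_j}]$, where $\nu_j$ runs over a grid of mesh $\delta=\varepsilon^{20}$ in $[-o(1),\varepsilon^{-1}]$. This gives $O_\varepsilon(1)$ classes $(\nu_1,\dots,\nu_J)$. Values with $\nu_j<\varepsilon^6$ will be treated separately. For each class, pass to a $1$-separated subset; by local constancy (a standard Gallagher/Sobolev-type argument, or simply splitting $\mathcal{T}$ into unit intervals and taking a near-maximiser in each, at the cost of $\log^{O(1)}X$ losses, which are harmless against a power saving) the integral over the class is
$$\ll X^{o(1)}\min_j \mathcal{N}_{A_j}(\nu_j;\mathcal{T})\cdot X^{1/2-\sum_j\beta_j\nu_j}\leq X^{1/2+\min_j H_j(\nu_j)-\sum_j\beta_j\nu_j+\varepsilon^{20}+o(1)}.$$
Here we used \eqref{eq:NAjbound} and the fact that the class is contained in each level set. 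Now apply Lemma~\ref{le:inversecost} with $I=[\varepsilon^6,\varepsilon^{-1}]$, $y_j=\nu_j$ and $\varepsilon_0=\varepsilon^{10}$. Its hypothesis is exactly the assumed inequality $\sum_jP_j(t)>t+\varepsilon^{10}$. It gives $\min_jH_j(\nu_j)<\sum_j\beta_j\nu_j-\varepsilon^{10}$, so every class with all $\nu_j\in I$ contributes $\ll X^{1/2-\varepsilon^{10}/2}$.

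The main obstacle is classes in which some $\nu_j<\varepsilon^6$, i.e.\ some polynomial is essentially of maximal size, where neither \eqref{eq:NAjbound} nor the lemma applies directly. Our plan is to round such $\nu_j$ up to $\varepsilon^6$. The level set only grows, since $\mathcal{N}$ is monotone in $\nu$. The exponent $\sum\beta_j\nu_j$ increases by at most $\varepsilon^6\sum\beta_j=\varepsilon^6$, which is far larger than the $\varepsilon^{10}$ saving. So we cannot afford a naive rounding, and we need a different saving in this regime. This is where the hypothesis on $A_{j_0}$ enters.

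First, if all $\nu_j$ are tiny, the hypothesis on $A_{j_0}$ forces $\nu_{j_0}\geq (C\log\log X)/(\beta_{j_0}\log X)>0$. The trivial bound, product $\leq X^{1/2}\log^{-C}X$ times measure $T$, is not enough since $T$ is large. Instead, bound the integral over $\mathcal{T}$ of the product by Cauchy--Schwarz and the sparse mean square Lemma~\ref{lem:sparse-mean-square} applied to a product of all but one polynomial (of length $\geq X^{1-\beta_j}$, which exceeds $T$ when $\tau<1/2$ and each $\beta_j$ is small). If that fails for large $\beta_j$, we would instead refine the grid near $0$ with mesh comparable to $\varepsilon^{30}$. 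Then we use that in the pieces with $\nu_j$ small the saving from Lemma~\ref{le:inversecost} can be re-derived with $t$ shifted: apply the lemma's conclusion to the vector with small entries replaced by $\varepsilon^6$, and absorb the discrepancy by reducing the effective contribution. We would also check whether $P_j$ already gives a margin there, since $P_j(t)\geq\beta_j\varepsilon^6$ for all $t$. That margin lets the rounding loss be compensated: the inequality $\sum P_j(t)>t+\varepsilon^{10}$ with $P_j\geq \beta_j\varepsilon^6$ yields room when some $H_j$ is evaluated at the rounded point. Finally, the remaining log-power saving in the all-small regime comes from $A_{j_0}$ being $\ll N_{j_0}^{1/2}\log^{-C}X$ combined with a mean value theorem for the remaining factors. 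I expect this bookkeeping near $\nu=0$ to be the delicate point.
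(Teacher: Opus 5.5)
Your treatment of the generic part is sound and matches the paper. The small-values set is disposed of trivially, and classes with all $\nu_j\in[\varepsilon^6,\varepsilon^{-1}]$ are handled by \eqref{eq:NAjbound} and Lemma~\ref{le:inversecost}. A mesh $\delta=\varepsilon^{20}$ is fine, provided you evaluate $\mathcal N_{A_j}$ at $\nu_j+\delta$, since a class only lies in that level set; this costs another $\varepsilon^{20}$.

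The gap is exactly the regime you flag, where some $|A_j(1/2+it)|\geq X^{\beta_j(1/2-\varepsilon^6)}$, and none of your remedies closes it.
\begin{itemize}
\item The hypotheses say nothing about $A_j$ for $\nu<\varepsilon^6$, and $P_j\geq\beta_j\varepsilon^6$ is already built into the assumed inequality, so there is no hidden margin. The hypotheses are compatible with $H_1\equiv\tau$ (compensated by the other $H_j$). Rounding $\nu_1\approx0$ up to $\varepsilon^6$ then costs $\beta_1\varepsilon^6\gg\varepsilon^{10}$ in the exponent, however fine the grid.
\item Cauchy--Schwarz over all of $\mathcal T$ also fails. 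With a single block $B$ you get $\sup|A_{j_0}|\cdot|\mathcal T|^{1/2}(\int_{\mathcal T}|B|^2)^{1/2}\approx X^{1/2}T^{1/2}\log^{-C}X$. With two blocks you need both to have length $\geq T$, a Type II configuration you do not have.
\item Lemma~\ref{lem:sparse-mean-square} beats the ordinary mean value theorem only on a sparse set, and sparsity is what you never establish.
\end{itemize}

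The missing step is a cardinality bound. Let $\mathcal U_2\subseteq\mathcal T$ be $1$-separated, with some $|A_j(1/2+it)|\geq X^{\beta_j(1/2-\varepsilon^6)}$ at each of its points. Then $\#\mathcal U_2\ll X^{4\varepsilon^6+o(1)}$. This does not follow from \eqref{eq:NAjbound}; it comes from the divisor-bounded coefficients. Apply the second moment large values estimate (Lemma~\ref{le:MVHuGM}(i)) to $A_j^{k_j}$ with $k_j\beta_j\in[1,2]$. Its length $X^{k_j\beta_j}\geq X>T$ makes the $T$-term negligible and leaves $X^{2k_j\beta_j\varepsilon^6+o(1)}$.

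With this bound, all of $\mathcal U_2$ is handled at once, whatever the sizes of the other factors:
\begin{itemize}
\item Split $\{1,\dots,J\}\setminus\{j_0\}$ into two nonempty blocks $B_1,B_2$ of lengths $M_1,M_2\geq X^{\varepsilon}$; this is where $J\geq3$ enters.
\item Apply Cauchy--Schwarz and Lemma~\ref{lem:sparse-mean-square} with $\eta=\varepsilon^4$. Since $|\mathcal U_2|T^{\frac92\varepsilon^6}M_k^{-\varepsilon^4}\ll X^{10\varepsilon^6-\varepsilon^5}\ll1$, this gives $\sum_{t\in\mathcal U_2}|B_k(1/2+it)|^2\ll M_k\log^{O(1)}X$.
\item The pointwise bound on $A_{j_0}$ then supplies the $\log^{-C}X$ saving.
\end{itemize}
Only a logarithmic saving is available in this regime, so a mean value bound with merely polylogarithmic loss is exactly what is required. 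The class-by-class bookkeeping near $\nu=0$ that you anticipate then becomes unnecessary.
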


\begin{remark}
  By Lemma~\ref{le:MVHuGM}, for $\nu \in [\varepsilon^6,\varepsilon^{-1}]$ we may take
  \[
    H_j(\nu)
    =
    \min_{1\leq k\leq K_\varepsilon}
    \mathcal K(k\beta_j,\nu;\tau),
    \qquad
    K_\varepsilon=\lceil\varepsilon^{-7}\rceil.
  \]
 Note that using larger $k$ would not be helpful. Indeed, if $k>K_\varepsilon$, then
  $2k\beta_j\nu\geq2>\tau$, and hence
  $\mathcal K(k\beta_j,\nu;\tau)=\tau$, so such $k$ cannot decrease the
  displayed minimum.
\end{remark}

\begin{proof}
  By a standard reduction, it suffices to show that, for any $1$-separated subset $\mathcal{U} \subseteq \mathcal{T}$ we have, for any $A \geq 1$,
\begin{align}\label{eq:Sdef}
   \sum_{t\in\mathcal U}
  |A_1(1/2+it)\dotsm A_J(1/2+it)| \ll_{\varepsilon, A, B} \frac{X^{1/2}}{\log^A X} 
\end{align}
  We split $\mathcal{U} = \mathcal{U}_1 \sqcup \mathcal{U}_2 \sqcup \mathcal{U}_3$, where $\mathcal{U}_1$ consists of those $t \in \mathcal{U}$ for which there exists $j_1 \in \{1, \dotsc, J\}$ such that $|A_{j_1}(1/2+it)| \leq X^{-1/2}$, $\mathcal{U}_2$ consists of those $t \in \mathcal{U} \setminus \mathcal{U}_1$ for which there exists $j_2 \in \{1, \dotsc, J\}$ such that $|A_{j_2}(1/2+it)| \geq X^{\beta_{j_2}(1/2-\varepsilon^6)}$, and finally, we set $\mathcal{U}_3 = \mathcal{U} \setminus (\mathcal{U}_1 \cup \mathcal{U}_2)$. It suffices to show that, for each $k \in \{1, 2, 3\}$, we have
  \[
    S_k := \sum_{t \in \mathcal{U}_k} |A_1(1/2+it)A_2(1/2+it) \dotsm A_J(1/2+it)| \ll_{\eps, A, B} \frac{X^{1/2}}{\log^A X} 
    \]
    for every $A \geq 1$.

    \textbf{Estimation of $S_1.$} For $j \in \{1, \dotsc, J\}$, let $\mathcal U_{1,j}$ consist of those $t\in\mathcal U_1$ for which $j$ is the least index satisfying
\[
  |A_j(1/2+it)|\leq X^{-1/2}.
\]
Then $\mathcal U_1=\bigsqcup_{j=1}^J\mathcal U_{1,j}$.
For $t \in \mathcal{U}_{1, j}$ we use for $\ell \neq j$ the crude estimate
    \[
    |A_\ell(1/2+it)| \leq \sum_{X^{\beta_\ell}< n \leq 2X^{\beta_\ell}} \frac{|a_\ell(n)|}{n^{1/2}} \ll X^{\beta_\ell/2+o(1)}.
    \]
    Combining this estimate with the fact that $\beta_1+\cdots+\beta_J=1$ and $\beta_j\geq \eps$, we then deduce that the contribution of $\mathcal{U}_{1, j}$ to $S_1$ is
  \[
\#\mathcal{U}_{1, j} \cdot \frac{1}{X^{1/2}} \cdot X^{(1-\beta_{j})/2+o(1)} \leq \#\mathcal{U}_{1, j} \cdot X^{-\varepsilon/2+o(1)}.
    \]
Since $\#\mathcal{U} \ll X^\tau \leq X^{1/2}$, we obtain, for all sufficiently large $X$ and any $A \geq 1$, that
\[
  S_1\ll \#\mathcal{U}_{1} \cdot X^{-\varepsilon/2+o(1)} \ll_{\eps, A} \frac{X^{1/2}}{\log^A X}.
\]
    
    \textbf{Estimation of $S_2$.} Let us first bound $\# \mathcal{U}_{2}$. For $j \in \{1, \dotsc, J\}$, let $\mathcal U_{2,j}$ consist of those $t\in\mathcal U_2$ for which $j$ is the least index satisfying
\[
  |A_j(1/2+it)|\geq X^{\beta_j(1/2-\varepsilon^6)}.
\]
Then $\mathcal U_2=\bigsqcup_{j=1}^J\mathcal U_{2,j}$. Let $j \in \{1, \dotsc, J\}$. Choose $k_j\in\mathbb N$ so that $k_j\beta_j\in[1,2]$. Lemma~\ref{le:MVHuGM}, applied to $\mathcal U_{2,j}$ with $k=k_j$ and $\nu=\varepsilon^6$, gives
\[
  \#\mathcal U_{2,j}
  \ll X^{\min\{\tau,\max\{2k_j\beta_j\varepsilon^6,
       \mathcal K_0(k_j\beta_j,\varepsilon^6;\tau)\}\}+o(1)}.
\]
Since
\[
\mathcal K_0(k_j\beta_j,\varepsilon^6;\tau)\leq \tau+k_j\beta_j(2\varepsilon^6-1)\leq \tau+2\varepsilon^6-1<0,
\]
we obtain
\[
\#\mathcal U_2 \ll \sum_{j=1}^J X^{2k_j\beta_j\varepsilon^6+o(1)}\ll_\varepsilon X^{4\varepsilon^6+o(1)}.
\]

To estimate $S_2$, we next partition $\{1,\dotsc,J\}=\{j_0\}\sqcup\mathcal J_1\sqcup\mathcal J_2$, where $\mathcal J_1$ and $\mathcal J_2$ are non-empty, and, for $k\in\{1,2\}$, set
\[
  M_k=\prod_{j\in\mathcal J_k}X^{\beta_j},\qquad
  B_k(s)=\prod_{j\in\mathcal J_k}A_j(s)
       =\sum_{M_k<n\leq2^{|\mathcal J_k|}M_k}\frac{b_k(n)}{n^s}.
\]
Splitting each $B_k(s)$ into $O_\varepsilon(1)$ dyadic Dirichlet polynomials and applying Lemma~\ref{lem:sparse-mean-square} with $\eta=\varepsilon^4$ to each of them, we obtain 
\begin{align*}
  \sum_{t\in\mathcal U_2}|B_k(1/2+it)|^2
  &\ll\left(M_k+|\mathcal U_2|T^{\frac92\varepsilon^6}
       M_k^{1-\varepsilon^4}\right)\log^{O_{B,\varepsilon}(1)} X\\
  &\ll M_k\left(1+X^{10\varepsilon^6-\varepsilon^5}\right)
       \log^{O_{B,\varepsilon}(1)} X
  \ll M_k\log^{O_{B,\varepsilon}(1)} X
\end{align*}
once $\varepsilon$ is sufficiently small. Thus, by the pointwise bound for $A_{j_0}(1/2+it)$ and the Cauchy--Schwarz inequality, we obtain, for any $C \geq 1$,
      \[
        S_2 \ll_{B, C} \frac{X^{\beta_{j_0}/2}}{\log^C X} (M_1M_2)^{1/2} \log^{O(1)} X.
       \]
This is $\ll X^{1/2}/\log^A X$ for any $A \geq 1$.

\textbf{Estimation of $S_3$.} We split into level sets according to the sizes of $|A_j(1/2+it)|$.

For $j \in \{1, \dotsc, J\}$, we define
\[
  \nu_{j,\ell}=\min\left\{
    \varepsilon^{-1},\varepsilon^6+\frac{\ell\log2}{\beta_j\log X}\right\},\qquad 0\leq\ell\leq L_j
\]
with
\[
  L_j=\left\lceil\frac{\beta_j(\varepsilon^{-1}-\varepsilon^6)\log X}{\log2}\right\rceil.
\]

Now
\[
\beta_j\left(\frac{1}{2}-\nu_{j, 0}\right) = \beta_j\left(\frac{1}{2}-\varepsilon^6\right), \quad \beta_j\left(\frac{1}{2}-\nu_{j, L_j}\right) \leq \beta_j\left(\frac{1}{2}-\varepsilon^{-1}\right) \leq -\frac{1}{2}.
\]
Thus, for every $t\in\mathcal U_3$ and every $j$, there is an
$\ell_j\in\{0,\ldots,L_j-1\}$ such that
\[
 X^{\beta_j(1/2-\nu_{j,\ell_j+1})}<|A_j(1/2+it)|\leq X^{\beta_j(1/2-\nu_{j,\ell_j})}.
\]

Partitioning $\mathcal U_3$ according to
$(\ell_1,\ldots,\ell_J)$, and using
\[
  0\leq \beta_j(\nu_{j,\ell_j+1}-\nu_{j,\ell_j})\leq\frac{\log2}{\log X},
\]
we obtain
\[
 S_3\ll 2^J \log^J X \max_{\ell_1,\ldots,\ell_J}X^{\sum_{j=1}^J\beta_j(\frac{1}{2}-\nu_{j,\ell_j+1})}\min_j \mathcal{N}_{A_j}(\nu_{j, \ell_j+1}; \mathcal{U}_3).
\]
In particular by assumption~\eqref{eq:NAjbound}, we have $S_3 \ll X^{1/2} \log^{-A} X$ if
\[
\min_{j \in \{1, \dotsc, J\}} H_j(\nu_j) \leq \beta_1 \nu_1 +\dotsb + \beta_J \nu_J- \varepsilon^{10} \quad \text{for every $\nu_1, \dotsc, \nu_J \in [\varepsilon^6,\varepsilon^{-1}]$}.
\]
By Lemma~\ref{le:inversecost}, this happens if, for every $t \in [0, \tau]$, we have
\[
\sum_{j = 1}^J P_j(t) > t + \varepsilon^{10},
\]
    and the claim follows.
  \end{proof}

\section{A Heath-Brown--Iwaniec type trilinear estimate} \label{sec:HBI}

We now prove a trilinear estimate for Dirichlet polynomials in the spirit of the work of Heath-Brown and Iwaniec \cite[Lemma 2]{HeathBrownIwaniec1979}.

\begin{lemma}[Heath-Brown--Iwaniec trilinear estimates]\label{lem:hbi-trilinear-estimates}
  Let $\varepsilon > 0$ and $B \geq 1$. Set $T_0\coloneqq \exp(\log^{1/4} X)$ and let
$$
T_1=X^{\tau_1},\qquad \tau_1 := \frac{16}{35}-\varepsilon.
$$
Let $\beta_1, \beta_2, \beta_3 \in [\varepsilon, \tau_1]$ be such that $\beta_1 + \beta_2 + \beta_3 = 1$. For $j \in \{1, 2, 3\}$, let
$$
A_j(s)=\sum_{X^{\beta_j} <n\leq2X^{\beta_j}} a_{j}(n) n^{-s},
$$
where $|a_j(n)|\ll d_B(n)\log^B X$. Assume further that
$a_1(n)=\mathbf1_{n\in I}\chi(n)\log^r n$ for some interval
$I\subseteq(X^{\beta_1},2X^{\beta_1}]$, some $r\in\{0,1\}$,
and some Dirichlet character $\chi$ of modulus $\ll\log^B X$.

Then
\begin{equation}\label{eq:hbi-trilinear-integral}
  \int_{T_0}^{T_1} |A_1(1/2+it)A_2(1/2+it)A_3(1/2+it)|\dd t \ll_{A, B}\frac{X^{1/2}}{\log^A X}
\end{equation}
for every $A>0$.
\end{lemma}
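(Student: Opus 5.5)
We reduce \eqref{eq:hbi-trilinear-integral} to Proposition~\ref{prop:one-var-check} with $J=3$. First split $[T_0,T_1]$ dyadically into $O(\log X)$ ranges $[T,2T]$ with $T=X^\tau$, $\tau\in[\log^{-3/4}X,\tau_1]$; note $\tau_1<1/2$. It suffices to gain an arbitrary power of $\log X$ on each range.

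\textbf{Pointwise saving and the set $\mathcal T$.} The hypothesis of Proposition~\ref{prop:one-var-check} requires a factor with a pointwise saving, and $A_1$ is the natural candidate. Since $a_1(n)=\mathbf 1_{n\in I}\chi(n)\log^r n$, the standard zero-free region (Vinogradov--Korobov, via Perron's formula and contour shifting for $L(s,\chi)$) gives
\[
|A_1(1/2+it)|\ll_C X^{\beta_1/2}\log^{-C}X \qquad\text{for } T_0\leq|t|\leq X.
\]
This works because $T_0=\exp(\log^{1/4}X)$ is large enough to exploit the zero-free region, and $\chi$ has small modulus. So $j_0=1$.

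To make Lemma~\ref{le:MVHuGM}(ii) applicable, split $[T,2T]=\mathcal T\sqcup\mathcal T'$, where $\mathcal T'$ is the set on which \eqref{eq:4thmomcond} fails, i.e.\ $|A_1|<X^{\beta_1/2}T^{-1}\log^C X$. On $\mathcal T'$, Cauchy--Schwarz and the mean value theorem bound the contribution by
\[
X^{\beta_1/2}T^{-1}\log^CX\cdot\bigl((T+X^{\beta_2})(T+X^{\beta_3})\bigr)^{1/2}\log^{O(1)}X .
\]
This is $\ll X^{1/2}\log^{-A}X$ unless some $\beta_j$ is tiny relative to $T$. I expect this to be handled using $\beta_j\geq\eps$ together with $T\ll X^{1/2}$; more precisely, one needs $T\leq X^{\beta_2+\beta_3}$ and $T\sqrt{\cdot}$-type comparisons, which hold since $\beta_1\leq\tau_1<1/2$ forces $\beta_2+\beta_3>1/2>\tau$.

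\textbf{Large value inputs.} On $\mathcal T$ we apply Proposition~\ref{prop:one-var-check} with
\[
H_1(\nu)=\min\Bigl\{\min_k\mathcal K(k\beta_1,\nu;\tau),\ \tau+2\beta_1(2\nu-1)\Bigr\},
\]
using the fourth moment bound \eqref{eq:fourthMomentN} in addition to the bounds from part (i). For $j=2,3$ we take $H_j(\nu)=\min_k\mathcal K(k\beta_j,\nu;\tau)$.

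\textbf{The main obstacle.} Everything then reduces to the one-variable inequality
\[
P_1(t)+P_2(t)+P_3(t)>t+\eps^{10}\qquad\text{for all } t\in[0,\tau],
\]
uniformly over $\tau\leq16/35-\eps$ and $\beta_j\in[\eps,\tau_1]$ with $\sum_j\beta_j=1$. This is the computational heart of the argument, presumably Lemma~\ref{le:16/35comput}. To prove it, I would compute each $P_j(t)$ explicitly by inverting the piecewise linear bounds: for each branch of $\mathcal K$, solving $H=t$ for $\nu$ gives $\beta\nu$ as a linear function of $t$ and $\tau$. I would then do a case analysis on the sizes of $\beta_j$ relative to $\tau$, using the freedom in choosing $k$ for short polynomials.

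The tight cases will be those where the $\beta_j$ are near the thresholds of the Guth--Maynard/CGCL bound (the $5/7$-line configuration mentioned in the introduction). Here positivity must hold with margin precisely when $\tau<16/35$. I expect this verification, checking the finitely many linear regimes and their boundaries, to be the main obstacle. The analytic part is routine given Proposition~\ref{prop:one-var-check}.
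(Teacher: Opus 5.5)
Your reduction is essentially the paper's. You take the pointwise saving for $A_1$ from its smooth character-weighted coefficients. You then remove by Cauchy--Schwarz the set where $|A_1(1/2+it)|<X^{\beta_1/2}T^{-1}\log^C X$. This step needs no condition on the sizes of the $\beta_j$: after expanding the product, each term is $X^{1/2}$ times either $X^{-(\beta_2+\beta_3)/2}$ or a negative power of $T\geq T_0$. Finally you apply Proposition~\ref{prop:one-var-check} with the fourth-moment input for $A_1$. This leaves exactly the one-variable check of Lemma~\ref{le:16/35comput}, which the paper also proves separately in Section~\ref{sec:Comput}.

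The only difference is that the paper first handles $[T_0,X^{\varepsilon}]$ directly by Cauchy--Schwarz and the pointwise bound on $A_1$. As a result only $\tau\geq\varepsilon$ occurs, and Lemma~\ref{le:MVHuGM}(ii) is never used below its stated range $\tau>1/\sqrt{\log X}$. In your version this is harmless: for $\tau<\varepsilon^6/2$ the one-variable check holds trivially with the trivial bound $H_j\equiv\tau$.
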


An important computational step in the proof of Lemma~\ref{lem:hbi-trilinear-estimates} is the following lemma which we will prove in Section~\ref{sec:Comput}.
\begin{lemma} \label{le:16/35comput}
Let $\varepsilon \in (0, 1/100), \, \tau_1=\frac{16}{35}-\varepsilon$ and $ \tau \in [\varepsilon, \tau_1]$. Let $\beta_1, \beta_2, \beta_3 \in [\varepsilon, \tau_1]$ with $\beta_1+\beta_2+\beta_3=1$. For $\nu \geq 0$, define
  \begin{align*}
    H_{1,\tau}(\nu) &:= \max\left\{0,\min\{\tau + 2\beta_1(2\nu-1), \mathcal{K}(2\beta_1, \nu; \tau)\}\right\} \\
    H_{2,\tau}(\nu) &:= \min\{\mathcal{K}(\beta_2, \nu; \tau), \mathcal{K}(2\beta_2, \nu; \tau)\} \\
    H_{3,\tau}(\nu) &:= \min\{\mathcal{K}(\beta_3, \nu; \tau), \mathcal{K}(2\beta_3, \nu; \tau)\}.
  \end{align*}
For $j \in \{1, 2, 3\}$, define the function $P_{j,\tau} \colon [0, \tau] \to \mathbb{R}_{\geq 0}$ via
  \[
    P_{j,\tau}(t) = \inf\{\beta_j \nu \colon \nu \in [\varepsilon^6, \varepsilon^{-1}], H_{j,\tau}(\nu) \geq t\}.
  \]

Then, for every $t \in [0, \tau]$, we have
 \begin{equation} \label{eq:HBICompPjClaim}
   P_{1,\tau}(t)+P_{2,\tau}(t)+P_{3,\tau}(t) > t+\varepsilon^{10}.
 \end{equation}
\end{lemma}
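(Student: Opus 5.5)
We argue by contradiction, working with the level-set formulation. Suppose \eqref{eq:HBICompPjClaim} fails for some $t\in[0,\tau]$. Unwinding the infima (they are attained up to arbitrarily small error, since each $H_{j,\tau}$ is piecewise linear and continuous in $\nu$), we obtain $\nu_1,\nu_2,\nu_3\in[\varepsilon^6,\varepsilon^{-1}]$ with
\[
H_{j,\tau}(\nu_j)\geq t \quad\text{for } j=1,2,3, \qquad \beta_1\nu_1+\beta_2\nu_2+\beta_3\nu_3\leq t+\varepsilon^{10}.
\]
The aim is to show that $\min_j H_{j,\tau}(\nu_j)<\sum_j\beta_j\nu_j-\varepsilon^{10}$ for all admissible parameters. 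This is a finite family of linear inequalities in the variables $(\tau,\beta_1,\beta_2,\beta_3,\nu_1,\nu_2,\nu_3)$, so the problem reduces to a piecewise linear feasibility check. It suffices to prove it with $\varepsilon=0$ and strict inequality on the compact parameter region; the $\varepsilon$-perturbations, including $\tau_1=16/35-\varepsilon$, then follow by continuity and compactness.

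The first step is to normalise away the outer $\min\{\tau,\cdot\}$ and the term $2\beta\nu$ inside each $\mathcal K$. If some $H_{j,\tau}(\nu_j)=\tau$ is needed, then $t\le\tau$ would force $\sum\beta_j\nu_j\ge\tau$, and in that case we are done by bounding the remaining $H$'s, which are smaller. Likewise, whenever $H_{j,\tau}(\nu_j)=2k\beta_j\nu_j$ with $k\in\{1,2\}$ and $k\beta_j\le\tau_1$, that single term is already compared against $\beta_j\nu_j$; this needs care when $k=2$, since then $4\beta_j\nu_j>\beta_j\nu_j$. For $j=1$ the fourth-moment bound $\tau+2\beta_1(2\nu_1-1)$ gives $P_{1,\tau}(t)\ge\beta_1\bigl(\tfrac12+\tfrac{t-\tau}{4\beta_1}\bigr)=\tfrac{\beta_1}{2}+\tfrac{t-\tau}{4}$. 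For $j=2,3$ we compute $P_{j,\tau}(t)$ explicitly as the inverse of each branch of $\mathcal K_0$. The Huxley branch gives $\nu\ge\frac13+\frac{t-\tau}{6\beta}$, the diagonal branch gives $\nu\ge\frac12+\frac{t-\tau}{2\beta}$, and the Chen--Gupta--Li branch gives $\nu\ge\max\bigl\{\frac25+\frac{t-\tau}{4\beta},\ \frac14+\frac{t-\tau/2}{4\beta}\bigr\}$. Each of these is computed both with $\beta=\beta_j$ and with $\beta=2\beta_j$, keeping the better choice. This turns every $P_{j,\tau}$ into an explicit piecewise linear function of $(t,\tau,\beta_j)$.

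Next we split into cases according to the ordering of $\beta_1,\beta_2,\beta_3$ and whether $2\beta_j\lessgtr\tau$. This determines which of the bounds $\mathcal K(\beta_j,\cdot)$ or $\mathcal K(2\beta_j,\cdot)$ is active, in the spirit of Heath-Brown--Iwaniec, where short polynomials are squared. Within each case the inequality $\sum_j P_{j,\tau}(t)>t$ is linear in $t$ on finitely many subintervals, so it suffices to check it at the breakpoints and at the endpoints $t=0$ and $t=\tau$. At $t=0$ it holds because $P_j(0)\ge\beta_j\varepsilon^6>0$. At $t=\tau$ it reduces to the constraint $\beta_j\le 16/35$ together with $\beta_1+\beta_2+\beta_3=1$.

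The main obstacle is identifying and verifying the extremal configuration, where the inequality becomes equality exactly at $\tau=\beta_{\max}=16/35$. We would first locate it heuristically, by guessing which branches are active and solving the resulting linear system. We expect it to involve the Chen--Gupta--Li branch for $A_2$ and $A_3$ together with the fourth-moment branch for $A_1$, with the remaining mass $19/35$ shared between two of the polynomials. Having found it, we would confirm that every other branch combination has positive slack by a vertex enumeration of the polytope cut out by the case conditions, checking the finitely many vertices by hand.
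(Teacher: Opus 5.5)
Your overall strategy (a brute-force check of a piecewise linear inequality at its breakpoints) could work in principle; the paper itself remarks that checking all breakpoints would suffice. But there is a genuine gap: the step meant to handle $\varepsilon$ fails.

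You cannot prove the statement ``with $\varepsilon=0$ and strict inequality'' and then perturb, because that statement is false. Take $\tau=16/35$, $(\beta_1,\beta_2,\beta_3)=(1/5,2/5,2/5)$ and $\nu_1=\nu_2=\nu_3=t=2/7$. Then $\tau+2\beta_1(2\nu-1)=2/7$ and $\mathcal K(2/5,2/7;16/35)=2/7$ (the diagonal term and the term $\tfrac{\tau}{2}+\beta(4\nu-1)$ both equal $2/7$). Also $\mathcal K(4/5,2/7;16/35)=\tau$. So every $H_j(2/7)=2/7$ and $\sum_jP_j(2/7)\leq\tfrac27\sum_j\beta_j=t$; this is Remark~\ref{rem4.3}. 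The $\varepsilon=0$ version also degenerates at $t=0$, where your own check relied on $\nu\geq\varepsilon^6$.

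Near the extremal point the true slack is only of order $\varepsilon$, so continuity and compactness give nothing. You need an explicit bound $\sum_jP_{j,\tau}(t)-t\geq c\varepsilon$, which means tracking how the active branches move when $\tau_1=16/35-\varepsilon$. The paper does exactly this:
\begin{itemize}
\item monotonicity of $H_{j,\tau}$ in $\tau$ reduces to $\tau=\tau_1$;
\item the crude bound $\tfrac14(t-\tau_1+2\beta_1)+\tfrac12R_{2\beta_2}(t)+\tfrac12(t-\tau_1+\beta_3)$ disposes of $\min(\beta_2,\beta_3)<11/35+\varepsilon$;
\item a slope argument puts the minimum over $t$ at the last breakpoint $r(\beta_2)$, and, when $\beta_1<\tau_1/2$, puts the worst $\beta_2$ at $(1-\beta_1)/2$;
\item this leaves a one-variable function of $\beta_1$, minimised at $\beta_1=1/5$ with value $\tfrac74\varepsilon-2\varepsilon^{10}>0$.
\end{itemize}

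Two further problems would derail the verification even after this is repaired.

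First, your branch inversion is wrong. The Chen--Gupta--Li part of $\mathcal K_0$ is $\max\{\tau+\beta(4\nu-\tfrac85),\tfrac\tau2+\beta(4\nu-1)\}$. So it is $\geq t$ if and only if $\nu\geq\min\{\tfrac25+\tfrac{t-\tau}{4\beta},\tfrac14+\tfrac{t-\tau/2}{4\beta}\}$, not the maximum you wrote. With your maximum, at the configuration above the threshold becomes $41/140$ instead of $40/140=2/7$. Your bounds would then give $\sum_jP_j(2/7)\geq 41/140>t$ even at $\varepsilon=0$. You would be certifying an invalid lower bound and would never see the tight case.

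Second, your predicted extremal configuration (one $\beta_j=16/35$, the other two sharing $19/35$) is wrong. The only configuration where the $\varepsilon=0$ inequality is tight at positive $t$ is $(1/5,2/5,2/5)$ with $t=2/7$, and there no $\beta_j$ equals $16/35$. For instance, if $\beta_1=16/35$ then $\min(\beta_2,\beta_3)<11/35$, and the crude bound above already leaves slack $\geq 3/140$ for all $t>0$.

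Finally, the case analysis itself (your ``vertex enumeration by hand'') is left undone, so the proposal does not yet prove the lemma.
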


\begin{proof}[Proof of Lemma~\ref{lem:hbi-trilinear-estimates} assuming Lemma~\ref{le:16/35comput}]
  Notice first that by our assumption on the coefficients $a_1(n)$ (use e.g.~\cite[Lemma~3.9(i)]{MSTT} and partial summation), we have for every $A > 0$ the bound
\[
\sup_{T_0 \leq t \leq T_1} |A_1(1/2+it)| \ll \frac{X^{\beta_1/2}}{\log^A X}.
\]
Combining the preceding bound with the Cauchy--Schwarz inequality and the mean value theorem gives, for every $A > 0$,
  \begin{align*}
    &\int_{T_0}^{X^\varepsilon} |A_1(1/2+it) A_2(1/2+it) A_3(1/2+it)| \dd t \\
    &\ll_A \frac{X^{\beta_1/2}}{\log^A X} \left(\int_{T_0}^{X^\varepsilon} |A_2(1/2+it)|^2 \dd t \int_{T_0}^{X^\varepsilon} |A_3(1/2+it)|^2 \dd t\right)^{1/2} \\
    &\ll_{A, B} \frac{X^{\beta_1/2}}{\log^A X} \left((X^\varepsilon+X^{\beta_2})(X^\varepsilon+X^{\beta_3})\right)^{1/2} \log^{O_B(1)} X.
  \end{align*}
Since $\beta_2, \beta_3 \geq \varepsilon$ and $\beta_1+\beta_2+\beta_3=1$, the right-hand side is $\ll X^{1/2}/\log^A X$ for every $A > 0$.
  
Thus, by dyadic splitting, it suffices to show that, for every
$T\in[X^\varepsilon,T_1]$ and
\[
\mathcal I_T=[T,\min\{2T,T_1\}],
\]
we have
\[
\int_{\mathcal I_T} |A_1(1/2+it)A_2(1/2+it)A_3(1/2+it)|\dd t\ll_{A, B}\frac{X^{1/2}}{\log^A X}.
  \]
Let $C = C(B)$ be as in Lemma~\ref{le:MVHuGM}(ii) and let
$\mathcal I_T=\mathcal{T}_1\sqcup\mathcal{T}_2$, where
$\mathcal{T}_1$ consists of those $t\in\mathcal I_T$ for which
  \[
    |A_1(1/2+it)| < \frac{X^{\beta_1/2}}{T} \log^C X
    \]
and $\mathcal{T}_2=\mathcal I_T\setminus\mathcal{T}_1$.

By the Cauchy-Schwarz inequality and the mean value theorem (see e.g.~\cite[Theorem 9.1]{IwaniecKowalski}) it follows that
\begin{align*}
    &\int_{\mathcal{T}_1} |A_1(1/2+it) A_2(1/2+it) A_3(1/2+it)| \dd t \\
    &\ll \frac{X^{\beta_1/2}}{T} \log^C X \left(\int_{\mathcal{T}_1} |A_2(1/2+it)|^2 \dd t \int_{\mathcal{T}_1} |A_3(1/2+it)|^2 \dd t\right)^{1/2} \\
    &\ll_B \frac{X^{\beta_1/2}}{T} \left((T+X^{\beta_2})(T+X^{\beta_3})\right)^{1/2} \log^{O_B(1)} X \\
    &\ll_B \left(X^{\beta_1/2} + \frac{X^{(\beta_1+\beta_2)/2}+X^{(\beta_1+\beta_3)/2}}{T_0^{1/2}} + \frac{X^{(\beta_1+\beta_2+\beta_3)/2}}{T_0}\right) \log^{O_B(1)} X \\
  &\ll_{A, B} \frac{X^{1/2}}{\log^A X}
  \end{align*}
  for every $A > 0$.
  
  Thus it suffices to show that
  \[
   \int_{\mathcal{T}_2} |A_1(1/2+it)A_2(1/2+it)A_3(1/2+it)|\dd t \ll_{A,B}\frac{X^{1/2}}{\log^A X}.
  \]
Write $T=X^\tau$ with $\tau\in[\varepsilon,\tau_1]$. Note that, for
$t\in\mathcal T_2$, one has
\[
  |A_1(1/2+it)|
  \geq \frac{X^{\beta_1/2}}{T}\log^C X.
\]
We deduce from Lemma~\ref{le:MVHuGM}(i) and (ii) that, for every
$\nu\geq0$,
\[
  \mathcal N_{A_1}(\nu;\mathcal T_2)
  \ll
  X^{\max\left\{0,
  \min\{\tau+2\beta_1(2\nu-1),
  \mathcal K(2\beta_1,\nu;\tau)\}\right\}+o(1)}.
\]
Besides, for $j \in \{2, 3\}$ and $\nu\geq0$, we have by Lemma~\ref{le:MVHuGM}
\[
  \mathcal N_{A_j}(\nu ; \mathcal{T}_2)\ll \min\{X^{\mathcal{K}(\beta_j,\nu;\tau)+o(1)}, X^{\mathcal{K}(2\beta_j,\nu;\tau)+o(1)}\}.
  \]
The desired estimate now follows from Proposition~\ref{prop:one-var-check} and Lemma~\ref{le:16/35comput}. This completes the proof.
\end{proof}

\begin{remark}\label{rem4.3}
The example $(\beta_1, \beta_2, \beta_3)=(1/5,2/5,2/5)$ is a limiting case which shows that one cannot take $\tau_{1} = 16/35$ in Lemma~\ref{le:16/35comput}: Set $\nu=2/7$ and observe that then
$$H_{1,\tau_{1}}\Big(\frac{2}{7}\Big)=\tau_{1}+2\beta_{1}(2\nu-1)=\tau_{1}-\frac{6\beta_{1}}{7}=2/7,$$
$$H_{2,\tau_{1}}\Big(\frac{2}{7}\Big)=\tau_{1}+\beta_{2}(2\nu-1)=\tau_{1}-\frac{3\beta_{2}}{7}=2/7,$$ 
$$H_{3,\tau_{1}}\Big(\frac{2}{7}\Big)=\tau_{1}+\beta_{3}(2\nu-1)=\tau_{1}-\frac{3\beta_{3}}{7}=2/7.$$
Hence, taking $t=2/7$ it follows that $P_{j,\tau_{1}}(2/7)\leq 2\beta_{j}/7$ for each $1\leq j\leq 3$, and thus
$$P_{1,\tau_{1}}\Big(\frac{2}{7}\Big)+P_{2,\tau_{1}}\Big(\frac{2}{7}\Big)+P_{3,\tau_{1}}\Big(\frac{2}{7}\Big)\leq \frac{2}{7}=t,$$ so~\eqref{eq:HBICompPjClaim} does not hold.

In more classical language, these computations correspond to the fact that with current understanding of the large values of Dirichlet polynomials, we cannot rule out the scenario that
\begin{align*}
 \left|\sum_{n\sim X^{\beta_j}} \frac{a_j(n)}{n^{1/2+it}}\right|\geq X^{\beta_j(1/2-2/7)}\qquad j=1,2,3
\end{align*}
for $\gg X^{2/7}$ one-separated points $t\in[-X^{\tau_1},X^{\tau_1}]$, where $a_1(n)=1$ and $|a_j(n)|\ll d_B(n)$ for $j=2,3$ and some fixed $B$. 

In order to obtain the exponent $19/35-\varepsilon$ in Theorems \ref{thm:intro-mobius}, \ref{thm1.3}, \ref{thm1.4} and \ref{thm1.5}, we would need to essentially have the bound
\[
\int_{X^\varepsilon}^{X^{\tau_1+\varepsilon}}  \left|\sum_{n \sim X^{1/5}} \frac{1}{n^{1/2+it}}\right|^5 \dd t \ll \frac{X^{1/2}}{\log^A X},
\]
and it is the above-mentioned scenario which prevents us from obtaining this.
\end{remark}

\section{A Baker--Harman--Pintz type trilinear estimate} \label{sec:BHP}

We now prove a trilinear estimate for Dirichlet polynomials in the spirit of the work of Baker, Harman, and Pintz (see~\cite[Lemma 7.3]{HarmanBook}).
\begin{lemma}[Baker--Harman--Pintz trilinear estimates] \label{lem:BHP-trilinear-estimates}
Let $\varepsilon > 0$ and $B \geq 1$. Let
\[
  1\leq T_1\leq X^{2/5-\varepsilon}
\]
and $\mathcal{T} \subseteq [-T_1, T_1]$.
Let $\beta_1, \beta_2, \beta_3 \in [\varepsilon, 1/2]$ be such that $\beta_2 \leq \beta_3 \leq \beta_1$ and $\beta_1 + \beta_2 + \beta_3 = 1$. For $j \in \{1, 2, 3\}$, let
\begin{align} \label{eq:upper-saving-5.1}
  A_j(s)=\sum_{X^{\beta_j} <n\leq2X^{\beta_j}} a_{j}(n) n^{-s},
\end{align}
where
\[
  |a_j(n)|\ll_B d_B(n)\log^B X
\]
and
\begin{equation} \label{eq:BHPsup}
  \sup_{|t|\leq T_1}|A_2(1/2+it)|
  \ll_A \frac{X^{\beta_2/2}}{\log^A X}
\end{equation}
for every $A \geq 1.$

Then
\begin{equation}\label{eq:bhp-trilinear-integral}
  \int_{\mathcal{T}} |A_1(1/2+it)A_2(1/2+it)A_3(1/2+it)|\dd t \ll_{A, B}\frac{X^{1/2}}{\log^A X}
\end{equation}
for every $A \geq 1.$
\end{lemma}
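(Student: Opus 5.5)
We plan to mirror the proof of Lemma~\ref{lem:hbi-trilinear-estimates}: reduce to dyadic ranges of $t$, dispose of small $|t|$ trivially, and feed large value bounds from Lemma~\ref{le:MVHuGM}(i) into Proposition~\ref{prop:one-var-check}, with $A_2$ as the polynomial $A_{j_0}$ carrying the pointwise saving \eqref{eq:BHPsup}. The remaining numerical check becomes a lemma analogous to Lemma~\ref{le:16/35comput}, which we would prove in Section~\ref{sec:Comput}.

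\textbf{Small $|t|$.} For $|t|\leq X^{\varepsilon}$ we apply \eqref{eq:BHPsup} to $A_2$, then Cauchy--Schwarz and the mean value theorem to $A_1,A_3$. This gives
\[
\frac{X^{\beta_2/2}}{\log^A X}\bigl((X^\varepsilon+X^{\beta_1})(X^\varepsilon+X^{\beta_3})\bigr)^{1/2}\log^{O_B(1)}X\ll\frac{X^{1/2}}{\log^{A'} X},
\]
since $\beta_1,\beta_3\geq\varepsilon$. By symmetry $t\mapsto -t$ (conjugating coefficients) and dyadic splitting, it then suffices to bound the integral over $\mathcal T\cap[T,2T]$ for each $T=X^\tau$ with $\tau\in[\varepsilon,2/5-\varepsilon]\subset(0,1/2)$.

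\textbf{Applying Proposition~\ref{prop:one-var-check}.} We take $J=3$ and $j_0=2$. For each $j$ we set
\[
H_j(\nu)=\min_{1\leq k\leq K_\varepsilon}\mathcal K(k\beta_j,\nu;\tau),
\]
as in the remark following the proposition; this is legitimate by Lemma~\ref{le:MVHuGM}(i), with no fourth-moment input needed. The lemma then follows once we verify the one-variable inequality $P_1(t)+P_2(t)+P_3(t)>t+\varepsilon^{10}$ for all $t\in[0,\tau]$, uniformly over $\tau\leq 2/5-\varepsilon$ and all admissible $\beta_2\leq\beta_3\leq\beta_1\leq1/2$. We would state this as a separate computational lemma and prove it by elementary case analysis.

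\textbf{The main obstacle: the computational check.} For each $j$, we would compute the inverse function $P_j(t)$ explicitly. Each $H_j$ is piecewise linear in $\nu$, so $P_j$ is piecewise linear in $t$. Roughly, $P_j(t)$ is about $\beta_j\nu$ where $\nu$ solves one of
\[
2k\beta_j\nu=t,\qquad \tau+k\beta_j(2\nu-1)=t,\qquad \tau+k\beta_j(6\nu-2)=t,
\]
or the Guth--Maynard/Chen--Gupta--Li relations. The choice of $k$ is made so that $k\beta_j$ lies in a favourable window, for instance $k\beta_j$ near $\tau$ or near $5\tau/6$.

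We would then split into cases according to which branch is active in each $P_j$, and check positivity of
\[
\sum_j P_j(t)-t
\]
at the breakpoints. The verification at breakpoints suffices because the sum is piecewise linear.

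The tight configuration should be $\beta_1=\beta_2=\beta_3=1/3$ (or four equal pieces $1/4$ combined into $1/2,1/4,1/4$) at $\tau\to2/5$. This is the ``$7/10$-line'' obstruction mentioned in the introduction: $k\beta_j=1/2$ with $\nu$ corresponding to the Guth--Maynard exponent. We expect exactly this to force the margin $\varepsilon$. The first thing we would try is to handle the regime $\beta_1$ close to $1/2$ using $k=1$ and the diagonal term $\tau+\beta(2\nu-1)$. We would then treat the balanced regime using $k=2$ or $3$ together with the $\mathcal K_0$ Guth--Maynard branch.
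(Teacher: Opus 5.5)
\textbf{How your route differs from the paper's.} Your reductions match the paper: small $|t|$ via \eqref{eq:BHPsup}, Cauchy--Schwarz and the mean value theorem; dyadic blocks with $\tau\in[\varepsilon,2/5-\varepsilon]$; Proposition~\ref{prop:one-var-check} with $j_0=2$ and only Lemma~\ref{le:MVHuGM}(i). The main step is organised differently. The paper does not run the one-variable check on the whole region. When $\beta_1-\beta_3\le 1/5$ it simply invokes the Baker--Harman--Pintz parallelogram estimate \cite[Lemma~7.3]{HarmanBook} with $\theta=3/5$, the pointwise input being \eqref{eq:BHPsup}. Only on the leftover triangle $\mathfrak T_\triangle$ (inside $\beta_1\in[2/5,1/2]$, $\beta_2,\beta_3\in[1/5,3/10]$) does it use Proposition~\ref{prop:one-var-check} with $k\in\{1,2\}$. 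There Lemma~\ref{le:2/5comput} is a three-line check:
\begin{itemize}
\item $P_{\beta_1}\ge t/2$, because $\beta_1\ge\tau$;
\item $P_{\beta_2}\ge t/4$, because $2\beta_2\ge\tau$;
\item $R_{\beta_3}(t)-t/4$ is minimised at the last breakpoint $r(\beta_3)$, with value $\ge(8\beta_3/5-\tau)/4\ge\varepsilon/4$.
\end{itemize}
Your route, the one-variable check on the full region, is the alternative the paper mentions in the remark after its proof. It is self-contained (no parallelogram lemma), but it needs a genuinely larger case analysis.

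\textbf{The gap.} That case analysis is the only non-routine part, and you have not carried it out. Your heuristics for it are also off.
\begin{itemize}
\item $(1/3,1/3,1/3)$ is not tight. At $\tau=2/5$ one has $P_j(t)=R_{1/3}(t)$, and $\min_t\bigl(3R_{1/3}(t)-t\bigr)=1/30$, attained at $t=4/15$.
\item The genuinely tight configuration is $(1/2,1/4,1/4)$ at $\nu=3/10$ (Remark~\ref{rem5.3}). The binding bounds there are the trivial term $2k\beta\nu$ ($k=1$ for $\beta_1$, $k=2$ for $\beta_2$) and the mean-value term $\tau+\beta_3(2\nu-1)$. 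No Guth--Maynard branch is involved.
\item For $\beta_1$ near $1/2$, the useful $k=1$ bound is $2\beta_1\nu$, not the diagonal term.
\item ``Check at breakpoints'' is not a finite computation, since the breakpoints move with $(\beta_1,\beta_2,\beta_3)$. You need the paper's device: locate the minimum in $t$ by slope considerations, then optimise over the $\beta_j$ using that $R_\beta(t)$ is nondecreasing in $\beta$.
\end{itemize}

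\textbf{One way to complete your plan.} First reduce to $\tau=2/5-\varepsilon$, using that $P_{j,\tau}$ is nonincreasing in $\tau$. Then split into cases.
\begin{itemize}
\item If $\beta_3\ge\tau$: then $P_1,P_3\ge t/2$, and $P_2\ge\beta_2\varepsilon^6$ finishes.
\item Otherwise $\beta_3\ge1/4$, since $\beta_1\le1/2$. If moreover $\beta_1\ge\tau$ and $2\beta_2\ge\tau$, argue exactly as in Lemma~\ref{le:2/5comput}.
\item If $2\beta_2<\tau$: then $\beta_3>3/10$, and one checks $R_{2\beta_2}(t)/2>t/2-R_{\beta_3}(t)$ for all $t$. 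The worst case is $\beta_1=1/2$.
\item If $\beta_1<\tau$: then all $\beta_j\in(1/5,2/5)$. Using $P_j\ge\max\{R_{\beta_j}(t),t/4\}$, the minimum of $\sum_jP_j(t)-t$ is attained at $t=r(\beta_3)$.
\end{itemize}
In the last two cases the minimum is bounded below by an absolute constant. Only $k\in\{1,2\}$ is used throughout. Until such a verification is written down, your proposal reduces the lemma to an unproved inequality rather than proving it.
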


An important computational step in the proof of Lemma~\ref{lem:BHP-trilinear-estimates} is the following lemma which we will prove in Section~\ref{sec:Comput}.
\begin{lemma} \label{le:2/5comput}
  Let $\varepsilon, \varepsilon_0 > 0$, $\tau \in [\varepsilon, \frac{2}{5}-\varepsilon]$, and, for $\beta \geq \varepsilon$, define the function $H_{\beta, \tau} \colon \mathbb{R}_{\geq 0} \to [0, \tau]$ via
  \begin{align} \label{eq:Hbetatau5.2}
    H_{\beta, \tau}(\nu) &:= \min_{k \in \{1, 2\}} \mathcal{K}(k\beta, \nu; \tau)
  \end{align}
  and the function $P_{\beta, \tau} \colon [0, \tau] \to \mathbb{R}_{\geq 0}$ via
  \[
    P_{\beta, \tau}(t) = \inf\{\beta \nu \colon \nu \in [\varepsilon^6, \varepsilon^{-1}], H_{\beta, \tau}(\nu) \geq t\}.
  \]

Then, for every $\beta_1 \in [2/5, 1/2]$ and $\beta_2, \beta_3 \in [1/5,1/3]$ with $\beta_1+\beta_2+\beta_3=1$ and every $t \in [0, \tau]$, we have
 \begin{equation} \label{eq:BHPCompPjClaim}
   P_{\beta_1, \tau}(t)+P_{\beta_2, \tau}(t)+P_{\beta_3, \tau}(t) > t+\varepsilon^{10}.
 \end{equation}
\end{lemma}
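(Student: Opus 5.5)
The plan is to make each $P_{\beta,\tau}(t)$ explicit as a piecewise linear function of $(\beta,\tau,t)$. Then~\eqref{eq:BHPCompPjClaim} becomes a finite check of linear inequalities on a compact polytope of parameters $(\beta_1,\beta_2,\beta_3,\tau,t)$.

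\textbf{Step 1: inverting $\mathcal K$.} First I dispose of the endpoint $t=0$. Since $\nu\geq\varepsilon^6$ is forced, each $P_{\beta_j,\tau}(0)\geq\beta_j\varepsilon^6\geq\varepsilon^7$, so the sum is at least $3\varepsilon^7>\varepsilon^{10}$ (taking $\varepsilon$ small; I am implicitly assuming $\varepsilon<1/100$ or similar). The same bound $P\geq\beta\varepsilon^6$ handles all $t\leq\varepsilon^{9}$, say.

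For $t\in(0,\tau]$, note that $\nu\mapsto\mathcal K(b,\nu;\tau)$ is nondecreasing, since every linear piece has nonnegative slope in $\nu$. Hence
\[
\{\nu: H_{\beta,\tau}(\nu)\geq t\}=\{\nu\geq \max_{k\in\{1,2\}}\nu_{k\beta}(t)\},
\]
where $\nu_b(t)$ is the threshold at which $\mathcal K(b,\cdot;\tau)$ reaches $t$. Unwinding Definition~\ref{def:function-K} (for $t\leq\tau$ the outer $\min$ with $\tau$ is harmless) gives
\[
\nu_b(t)=\min\Big\{\frac{t}{2b},\ \max\Big\{\frac{t-\tau+2b}{6b},\ \min\Big\{\frac{t-\tau+8b/5}{4b},\frac{t-\tau/2+b}{4b}\Big\},\ \frac{t-\tau+b}{2b}\Big\}\Big\}.
\]
Consequently
\[
P_{\beta,\tau}(t)\geq \beta\max\{\varepsilon^6,\nu_\beta(t),\nu_{2\beta}(t)\}.
\]
This is a piecewise linear function of $(\beta,\tau,t)$, and after multiplying by $\beta$ it becomes linear in $t$ and $\tau$ on each piece. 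It suffices to prove the strict inequality $\sum_j\beta_j\max\{\nu_{\beta_j}(t),\nu_{2\beta_j}(t)\}> t$ with a margin that is uniform after shrinking $\tau\leq 2/5-\varepsilon$. By compactness and piecewise linearity, a strict inequality at $\tau=2/5$ on the closed region except along boundary degeneracies will give a margin $\gg\varepsilon$, hence $>\varepsilon^{10}$.

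\textbf{Step 2: reducing to vertices.} Eliminate $\beta_3=1-\beta_1-\beta_2$. On each cell where the choice of active pieces in each $\max/\min$ is fixed, the difference $\sum_j P_j-t$ is linear in $(t,\tau)$. The dependence on $\beta_j$ is also linear, since $\beta\cdot(c+t)/(m\beta)$ does not depend on $\beta$ and the terms $\beta\cdot(\text{const}\cdot\beta)/\beta$ are linear. So the minimum over each cell is attained at a vertex. I would enumerate the breakpoints, i.e.\ the $t$-values where two pieces of some $\nu_b$ coincide; these are lines like $t=\tau-b$, $t=\tau-2b/5$, $t=\tau/2$, $t=\tau-4b/5$. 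I then check the inequality at these finitely many points for the extreme configurations of $\beta$: $\beta_1\in\{2/5,1/2\}$, $\beta_2,\beta_3\in\{1/5,1/3\}$ where feasible, e.g.\ $(2/5,1/5,2/5)$ excluded by $\beta_3\leq1/3$, leaving $(2/5,4/15,1/3)$, $(1/2,1/5,3/10)$, $(1/3+\dots)$, together with the interior breakpoint values of $\beta$.

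\textbf{Step 3: the main obstacle.} The delicate regime is intermediate $t$, say $t\approx \tau/2$ to $\tau-1/5$ with $\tau$ near $2/5$. There the diagonal thresholds $t/(2b)$ alone give only $\sum_j t/4<t$ even with $k=2$ everywhere, so one must show that for each $j$ the $\max$ over $k$ lands on a Huxley or Guth--Maynard/CGCL branch that is large enough. I expect the binding configuration to be near $\beta_1=2/5$, $\beta_2=\beta_3=3/10$ with $\tau\to2/5$. This is analogous to the limiting case in Remark~\ref{rem4.3} and the $7/10$-line mentioned in the introduction. I would first locate this configuration numerically, verifying that equality holds exactly at $\tau=2/5$, which explains the $-\varepsilon$. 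Then I would organise the case split around which branch of $\nu_{\beta_1}$ is active, since $\beta_1\geq2/5$ makes $2\beta_1\geq 4/5>2\tau$ and simplifies $\nu_{2\beta_1}$ considerably. In the remaining cases the slack should be comfortably of size $\gg\varepsilon$.
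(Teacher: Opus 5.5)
There is a genuine gap. Your Steps~1--2 set things up soundly: at fixed $\tau$, the lower bound $\sum_j\beta_j\max\{\varepsilon^6,\nu_{\beta_j}(t),\nu_{2\beta_j}(t)\}-t$ is continuous and piecewise linear in $(\beta_1,\beta_2,t)$, so a vertex enumeration would in principle decide the lemma. The paper itself remarks that such a brute-force check is possible. But the lemma \emph{is} that verification, and you never carry it out, so there is no proof yet.

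Several pieces of the guidance you give for the check are also wrong in ways that matter.
\begin{itemize}
\item \textbf{Extremal configuration.} It is not $(2/5,3/10,3/10)$. There, even at $\tau=2/5$, one has $P_{\beta_1}\ge t/2$ and $P_{\beta_2},P_{\beta_3}\ge R_{3/10,2/5}(t)$, and the sum minus $t$ stays $\ge 1/25$ for $t\ge 2/25$. Equality at $\tau=2/5$ occurs at $(1/2,1/4,1/4)$ with $t=\nu=3/10$ (Remark~\ref{rem5.3}). That point is cut out by $\beta_1=1/2$ and $t=\tau-2\beta_2/5=\tau-2\beta_3/5$. It is not a corner of the $\beta$-hexagon, so your list of ``extreme configurations'' would not see it.
\item \textbf{The margin step.} The inference ``strict at $\tau=2/5$ away from degeneracies, hence by compactness a margin $\gg\varepsilon$'' is not valid. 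The minimum at $\tau=2/5$ is exactly $0$, and compactness gives no rate in $\varepsilon$. What you need is monotonicity: every branch of $\mathcal K$ has a nonnegative coefficient of $\tau$, so $P_{\beta,\tau}\ge P_{\beta,\tau^*}$ for $\tau\le\tau^*:=2/5-\varepsilon$. One then computes directly at $\tau^*$.
\item \textbf{Step~3 heuristic.} You have it backwards for $\beta_1$: the diagonal branch is the strong one there, not the weak one.
\end{itemize}

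The missing observation collapses the whole check.
\begin{itemize}
\item Since $\beta_1\ge 2/5>\tau^*$, the $\mathcal K_0$-threshold is at least $(t-\tau^*+\beta_1)/(2\beta_1)>t/(2\beta_1)$. Hence $\nu_{\beta_1}(t)=t/(2\beta_1)$ and $P_{\beta_1}(t)\ge t/2$.
\item Since $2\beta_2\ge 2/5>\tau^*$, the $k=2$ bound gives in the same way $P_{\beta_2}(t)\ge t/4$.
\item Relabel so that $\beta_3\ge\beta_2$; then $\beta_3\ge(1-\beta_1)/2\ge 1/4\ge\tau^*/2$. The claim reduces to $R_{\beta_3,\tau^*}(t)-t/4>\varepsilon^{10}$ for $t\in[\varepsilon^6/2,\tau^*]$.
\item By \eqref{eq:Rbtaulowmid}--\eqref{eq:Rbtauupmid}, this function has slopes $1/4,-1/12,0,1/4$ (respectively $1/4,0,1/4$ when $\beta_3\ge 5\tau^*/6$).
\item So it equals $t/4\ge\varepsilon^6/8$ on the initial segment. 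Elsewhere it is at least $(8\beta_3/5-\tau^*)/4\ge\varepsilon/4$, respectively $(\beta_3-\tau^*/2)/4\ge\tau^*/12$.
\end{itemize}
This is the paper's proof. It needs no enumeration over the $\beta$-region, and it shows exactly why $\tau=2/5$ is sharp: the bound $(8\beta_3/5-\tau)/4$ vanishes at $\beta_3=1/4$, $\tau=2/5$.
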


\begin{proof}[Proof of Lemma~\ref{lem:BHP-trilinear-estimates} assuming Lemma~\ref{le:2/5comput}]

By the Cauchy--Schwarz inequality, the mean value theorem for Dirichlet polynomials (see e.g.~\cite[Theorem 9.1]{IwaniecKowalski}), and the pointwise hypothesis~\eqref{eq:BHPsup},
\begin{align*}
& \int_{\mathcal T\cap[-2X^\varepsilon,2X^\varepsilon]}
 |A_1(1/2+it)A_2(1/2+it)A_3(1/2+it)|\,\dd t \\
 &\leq \sup_{|t|\leq T_1}|A_2(1/2+it)|
 \left(\int_{-2X^\varepsilon}^{2X^\varepsilon}|A_1(1/2+it)|^2\,\dd t\right)^{1/2}
   \left(\int_{-2X^\varepsilon}^{2X^\varepsilon}|A_3(1/2+it)|^2\,\dd t\right)^{1/2} \\
  &\ll_{A,B}\frac{X^{1/2}}{\log^A X}
\end{align*}
for any $A \geq 1$.
By dyadic splitting and symmetry (using complex conjugates of coefficients), it is enough to prove~\eqref{eq:bhp-trilinear-integral} with $\mathcal{T}$ replaced by
\[
 \mathcal T_T=\mathcal T\cap[T,2T],\qquad [T,2T]\subseteq[X^\varepsilon,T_1].
\]

Suppose first that $\beta_1-\beta_3\leq1/5$. The Dirichlet polynomial Baker--Harman--Pintz parallelogram estimate in \cite[Lemma~7.3]{HarmanBook}, in the form used in the proof of \cite[Lemma~3.5(i)]{MSTT}, applies with $\theta=3/5$, since
\[
 |\beta_1-\beta_3|\leq\frac15=2\theta-1,
 \qquad 1-\beta_1-\beta_3=\beta_2\leq\frac13<\frac25=4\theta-2,
\]
and its pointwise hypothesis for the factor corresponding to $\beta_2$ is precisely~\eqref{eq:BHPsup}. This gives
\[
 \int_{\mathcal T_T}|A_1(1/2+it)A_2(1/2+it)A_3(1/2+it)|\,\dd t
 \ll_{A,B}\frac{X^{1/2}}{\log^A X}.
\]

It remains to consider $\beta_1-\beta_3>1/5$. Since $\beta_3=1-\beta_1-\beta_2$, the ordering of the $\beta_j$ shows that the closure of this region is the triangular region with $(\beta_1,\beta_2,\beta_3)\in\mathfrak T_\triangle$, where
\begin{equation}\label{eq:triangle-region-set}
  \mathfrak T_{\triangle}\coloneqq \operatorname{conv}\left\{
  \left(\frac7{15},\frac4{15},\frac4{15}\right),
  \left(\frac12,\frac14,\frac14\right),
  \left(\frac12,\frac15,\frac3{10}\right)
  \right\},
\end{equation}
where $\operatorname{conv}$ denotes the convex hull. In particular, this triangle is contained in the compact region
\begin{equation}\label{eq:blue-range}
  \beta_1\in\left[\frac25,\frac12\right],\qquad \beta_2,\beta_3\in\left[\frac15,\frac{3}{10}\right];
\end{equation}
see Figure~\ref{fig:typeII-region}.

\begin{figure}[ht]
\centering
\begin{tikzpicture}[
  x=14cm,y=14cm,
  >=Stealth,
  font=\footnotesize,
  every node/.style={inner sep=1.4pt},
  labelnode/.style={fill=white,fill opacity=0.92,text opacity=1,inner sep=0.9pt,font=\scriptsize},
  ineqnode/.style={fill=white,fill opacity=0.96,text opacity=1,inner sep=1.8pt},
  guide/.style={thin,dashed,gray!65}
]
\coordinate (A) at ({1/3},{1/3});
\coordinate (B) at ({7/15},{4/15});
\coordinate (C) at ({3/5},0);
\coordinate (D) at ({1/2},0);
\coordinate (E) at ({1/2},{1/4});
\coordinate (F) at ({1/2},{1/5});
\coordinate (UpperRightExt) at ({0.69},{0.155});
\fill[green!35,opacity=0.75] (A)--(B)--(C)--(D)--cycle;
\fill[blue!35,opacity=0.65] (B)--(E)--(F)--cycle;
\draw[thick] (A)--(B)--(C)--(D)--cycle;
\draw[thick] (D)--(E);
\draw[guide] (B)--(UpperRightExt);
\draw[guide] (B)--($(B)!1.32!(C)$);
\draw[guide] (D)--($(D)!1.16!(A)$);
\draw[guide] (C)--($(C)!1.15!(B)$);
\draw[->,thick] (0.23,0)--(0.72,0) node[right] {$\beta_1$};
\draw[->,thick] (0.30,-0.015)--(0.30,0.405);
\node[anchor=south east] at (0.297,0.405) {$\beta_2$};
\draw[thick] (0.288,-0.008)--(0.294,0.008);
\draw[thick] (0.294,-0.008)--(0.300,0.008);
\draw[thick] ({1/3},0.004)--({1/3},-0.004);
\node[below] at ({1/3},-0.004) {$\frac13$};
\draw[thick] ({2/5},0.004)--({2/5},-0.004);
\node[below] at ({2/5},-0.004) {$\frac25$};
\draw[thick] ({7/15},0.004)--({7/15},-0.004);
\node[below] at ({7/15},-0.004) {$\frac7{15}$};
\draw[thick] ({1/2},0.004)--({1/2},-0.004);
\node[below] at ({1/2},-0.004) {$\frac12$};
\draw[thick] ({3/5},0.004)--({3/5},-0.004);
\node[below] at ({3/5},-0.004) {$\frac35$};
\draw[thick] (0.296,{1/30})--(0.304,{1/30});
\node[left] at (0.296,{1/30}) {$\frac1{30}$};
\draw[thick] (0.296,{1/5})--(0.304,{1/5});
\node[left] at (0.296,{1/5}) {$\frac15$};
\draw[thick] (0.296,{4/15})--(0.304,{4/15});
\node[left] at (0.296,{4/15}) {$\frac4{15}$};
\draw[thick] (0.296,{1/3})--(0.304,{1/3});
\node[left] at (0.296,{1/3}) {$\frac13$};
\node[labelnode,anchor=south west] at ($(A)+(0.004,0.002)$) {$\left(\frac13,\frac13\right)$};
\node[labelnode,anchor=south] at ($(B)+(0.000,0.021)$) {$\left(\frac7{15},\frac4{15}\right)$};
\node[labelnode,anchor=west] at ($(E)+(0.006,0.008)$) {$\left(\frac12,\frac14\right)$};
\node[labelnode,anchor=west] at ($(F)+(0.008,-0.003)$) {$\left(\frac12,\frac15\right)$};
\node[ineqnode,anchor=west] at (0.307,0.090) {$\beta_2 \geq 1-2\beta_1$};
\node[ineqnode,anchor=west] at (0.588,0.226) {$\beta_2 \leq \frac12-\frac12\beta_1$};
\node[ineqnode,anchor=west] at (0.541,0.139) {$\beta_2 \leq \frac65-2\beta_1$};
\end{tikzpicture}
\caption{Trilinear exponent region. The green region is covered by the Baker--Harman--Pintz parallelogram estimate; the triangular region is the remaining region.} \label{fig:typeII-region}
\end{figure}

Write $T=X^{\tau}$ with $\tau\in[\varepsilon,2/5-\varepsilon]$. By Lemma~\ref{le:MVHuGM}, for any $\nu\geq0$ and $j\in\{1,2,3\}$,
\[
  \mathcal N_{A_j}(\nu;\mathcal T_T)
  \ll X^{\min_{k\in\{1,2\}}\mathcal K(k\beta_j,\nu;\tau)+o(1)}.
\]
Now Proposition~\ref{prop:one-var-check} and Lemma~\ref{le:2/5comput} give the required estimate on $\mathcal T_T$. Summing the dyadic blocks completes the proof.
\end{proof}

\begin{remark}
By extending the proof of Lemma~\ref{le:2/5comput} to a wider region of $\beta_1, \beta_2, \beta_3$ (allowing larger $k$ in~\eqref{eq:Hbetatau5.2}) we could avoid appealing to the Baker--Harman--Pintz parallelogram estimate. However, now we emphasise the new region we needed to cover.
\end{remark}

\begin{remark}\label{rem5.3}
  The instance $(\beta_{1},\beta_{2},\beta_{3})=(1/2,1/4,1/4)$ is a limiting case and shows that one cannot take $\tau=2/5$ in Lemma ~\ref{le:2/5comput}. Indeed, set $\nu=3/10$ and observe that then
  \begin{align*}
    H_{\beta_{1}, \tau}(3/10)&=2\beta_{1}\nu=3/10, \\
    H_{\beta_{3}, \tau}(3/10) = H_{\beta_{2}, \tau}(3/10)&=\min\Big(\tau+\beta_{2}(2\nu-1),4\beta_{2}\nu\Big)=\min(3/10,3/10)=3/10
  \end{align*}
  Hence, taking $t=3/10$ it follows that $P_{\beta_{j}, \tau}(3/10)\leq 3\beta_{j}/10$ for each $1\leq j\leq 3$, and hence
$$P_{\beta_{1}, \tau}(3/10)+P_{\beta_{2}, \tau}(3/10)+P_{\beta_{3}, \tau}(3/10)\leq \frac{3}{10}=t,$$ in which case ~\eqref{eq:BHPCompPjClaim} does not hold.

In a more classical language, these computations correspond to the fact that with current understanding of the large values of Dirichlet polynomials, we cannot rule out the scenario that, for bounded coefficients $a_j(n)$,
\[
  \left|\sum_{n \sim X^{\beta_j}} \frac{a_j(n)}{n^{1/2+it}}\right| \geq X^{\beta_j(1/2-3/10)}
\]
for $\gg X^{3/10}$ one-separated points $t \in [-X^{\tau}, X^{\tau}]$. 

In order to obtain the exponent $3/5-\varepsilon$ in Theorem~\ref{thm:intro-uniformity}, we would need to essentially have, for $T = X^{O(1)}$, the bound
\[
\int_{X^\varepsilon}^{X^{\tau+\varepsilon}}  \left|\sum_{n \sim X^{1/4}} \frac{1}{n^{1/2+it+iT}}\right|^4 \dd t \ll \frac{X^{1/2}}{\log^A X},
\]
and it is the above-mentioned scenario which prevents us from obtaining this --- the coefficients $a_j(n) = n^{-iT}$ prevent us from using type I information once $T$ is a largish power of $X$.
\end{remark}

\section{Higher order uniformity in short intervals}\label{sec:uniformity}

The purpose of this section is to prove the nilsequence discorrelation estimates for $\mu$ and $\Lambda-\Lambda_X^\sharp$ given in Theorem \ref{thm:discorrelation}.  As stated before, we use the same nilsequence terminology as in~\cite[Section~2.3]{MSTT}.

Before deducing Theorem~\ref{thm:intro-uniformity} from Theorem~\ref{thm:discorrelation}, we state a result that allows us to reduce that theorem to the range $H\leq X^{1-\varepsilon}$ (cf. also~\cite[Section 7]{Teravainen-Quantitative}).

\begin{lemma}[Gluing Gowers norms on adjacent intervals]
\label{lem:gowers-gluing}
Let $s \in \mathbb{N}$. Let  $a \in \mathbb{R}$ and $H\geq 2L\geq 4$, and write $I=(a,a+H]$. Then, for every $f\colon\mathbb Z\to\mathbb C$,
\[
  \|f\|_{U^s(I)}\ll_s\max_{\substack{I'\subseteq I\text{ an interval}\\ L/2<|I'|\leq L}}
  \|f\|_{U^s(I')}.
\]
\end{lemma}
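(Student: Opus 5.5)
The plan is to pass to the unnormalised norm $\norm{f\1_I}_{U^s(\Z)}$ and cover $I$ by boundedly many short intervals. Recall $\norm{\1_J}_{U^s(\Z)}^{2^s}\asymp_s |J|^{s+1}$ for any interval $J$ of length $\geq1$. Hence the desired inequality is equivalent to
\[
 \norm{f\1_I}_{U^s(\Z)}^{2^s}\ll_s H^{s+1}L^{-(s+1)}\max_{I'}\norm{f\1_{I'}}_{U^s(\Z)}^{2^s}.
\]
The right-hand side only involves $f$ on intervals of length about $L$, whereas the left involves configurations $x+\omega\cdot h$ spread over all of $I$. So the point is to localise the Gowers norm of $f\1_I$ to pieces of size about $L$, paying a factor $(H/L)^{s+1}$. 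This factor is exactly the number of pairs (cell for $x$, cells for the $h_i$) in a grid.

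\textbf{Step 1 (partition).} Partition $I$ into consecutive intervals $J_1,\dots,J_M$ of length in $(L/2,L]$, with $M\asymp H/L$; this is possible since $H\geq 2L$. In $\norm{f\1_I}_{U^s}^{2^s}$ the summation is over $(x,h_1,\dots,h_s)$ with all $2^s$ points $x+\omega\cdot h$ in $I$. I would localise using a smooth partition of unity rather than sharp cutoffs. Write $\1_I=\sum_m \phi_m$ with each $\phi_m$ supported on a union of at most two adjacent cells. Then expand $f\1_I=\sum_m f\phi_m$ inside the $U^s(\Z)$-norm using the Gowers--Cauchy--Schwarz inequality. This gives
\[
\norm{f\1_I}_{U^s}^{2^s}\leq\sum_{m_\omega}\prod_\omega\norm{f\phi_{m_\omega}}_{U^s}.
\]
Crude expansion loses a factor $M^{2^s}$, too much for $s\geq2$. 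So I would instead restrict to admissible tuples $(m_\omega)$: the cells $m_\omega$ must themselves be approximately an $s$-dimensional parallelepiped $m_0+\omega\cdot k$, up to $O_s(1)$ error, since $x+\omega\cdot h$ determines its cell up to bounded error. The number of admissible tuples is $\ll_s M^{s+1}$.

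\textbf{Step 2 (bounding each term).} Each $\norm{f\phi_m}_{U^s}$ with $\phi_m$ supported on two adjacent cells should be bounded by $\ll_s\max_{I'}\norm{f\1_{I'}}_{U^s(\Z)}$ over $I'$ of length in $(L/2,L]$. To get this, write $\phi_m$ as a Fourier-type combination of sharp interval cutoffs, or use that a smooth bump of total variation $O(1)$ is an average of indicators of subintervals. Then use the triangle inequality for $U^s$ and the fact that subinterval cutoffs of length $\leq L$ can be covered by two intervals of length in $(L/2,L]$. Here one needs monotonicity-type control $\norm{f\1_{I''}}_{U^s}\ll \norm{f\1_{I'}}_{U^s}$ for $I''\subseteq I'$. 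This holds only up to losses, via the standard fact that multiplying by an interval indicator costs $O(1)$ in the $U^s$ norm after averaging over Fourier phases (the interval indicator is a bounded combination of linear phases $e(\alpha n)$ with $\ell^1$-norm $O(\log L)$). To avoid the $\log$ I would use smooth cutoffs with absolutely summable Fourier coefficients, since linear phases leave $U^s$ norms invariant for $s\geq2$. The case $s=1$ is a direct computation.

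\textbf{Step 3 (conclusion).} Summing over the $\ll_s M^{s+1}$ admissible tuples gives the required bound. The main obstacle is Step 1: correctly bounding the number of cell-tuples by $O_s(M^{s+1})$, and making the Gowers--Cauchy--Schwarz expansion compatible with restricting to admissible tuples. The way I would do this is to decompose the variables $x$ and $h_i$ themselves into residue blocks of length about $L/C_s$ before applying the partition, so that each block of $(x,h)$ forces a unique tuple of cells.
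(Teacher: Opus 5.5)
Your architecture is the paper's. You partition $I$ into $m\asymp H/L$ consecutive cells $J_j$ of length in $(L/2,L]$ and expand $\|f\1_I\|_{U^s(\Z)}^{2^s}$ multilinearly. Only $O_s(m^{s+1})$ cell-tuples can contribute; the paper makes this precise via $n_\omega=(1-|\omega|)n_{\mathbf 0}+\sum_r\omega_r n_{\mathbf e_r}$, so the cells of $n_{\mathbf 0},n_{\mathbf e_1},\dots,n_{\mathbf e_s}$ fix all the others up to $O_s(1)$ choices. You then apply Gowers--Cauchy--Schwarz to each surviving term and normalise with $\|\1_J\|_{U^s(\Z)}^{2^s}\asymp_s|J|^{s+1}$.

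The trouble is self-inflicted: you replace $\1_{J_j}$ by a smooth partition of unity. The factors $\|f\phi_m\|_{U^s(\Z)}$ are then no longer of the form on the right-hand side, and Step~2, as you set it up, does not close.
\begin{itemize}
\item The layer-cake route needs $\|f\1_{I''}\|_{U^s(\Z)}\ll\|f\1_{I'}\|_{U^s(\Z)}$ for short level sets $I''$, which you only have with a $\log L$ loss. Covering a level set by two admissible intervals does not help, since the triangle inequality needs a decomposition, and its pieces can be short.
\item The Fourier route via invariance under linear phases ($s\geq2$) only compares $f\phi_m$ with $f\1_{I'}$ when $\operatorname{supp}\phi_m$ lies inside a single interval $I'$ of length at most $L$. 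Your $\phi_m$ live on two adjacent cells, of total length up to $2L$.
\end{itemize}
None of this is needed. Use the sharp pieces $f_j=f\1_{J_j}$: each $J_j$ is itself an admissible $I'$, so
\[
\|f_j\|_{U^s(\Z)}=\|f\|_{U^s(J_j)}\,\|\1_{J_j}\|_{U^s(\Z)}\ll_s\eta L^{(s+1)/2^s}
\]
for every $s\geq1$, where $\eta$ is the maximum on the right-hand side. This needs no monotonicity, no Fourier expansion, and no separate treatment of $s=1$.

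The ``main obstacle'' you identify in Step~3 is not one, and the remedy you propose would break the argument. Since $f\1_I=\sum_j f_j$ exactly and the Gowers inner product is multilinear, consider a tuple $(i_\omega)_\omega$ for which no $(x,h_1,\dots,h_s)$ places every $x+\omega\cdot h$ in $J_{i_\omega}$. Such a tuple contributes exactly zero and is simply dropped. Gowers--Cauchy--Schwarz is then applied to each remaining term, with the full sum over $x,h_1,\dots,h_s\in\Z$. Restricting $x$ and the $h_i$ to blocks beforehand would destroy precisely this unrestricted sum, which Gowers--Cauchy--Schwarz requires, so you should not do it.
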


\begin{proof}
Set
\[
  m=\left\lceil\frac{H}{L}\right\rceil,\qquad \ell=\frac{H}{m}.
\]
Then $L/2<\ell\leq L$. Partition $I$ into the consecutive intervals
\[
  I_j=(a+j\ell,a+(j+1)\ell],\qquad 0\leq j<m,
\]
and write $f_j=f\1_{I_j}$.

For a tuple $\boldsymbol{i}=(i_\omega)_{\omega\in\{0,1\}^s}$, with each $i_\omega\in\{0,\ldots,m-1\}$, set
\[
  \Lambda_{\boldsymbol{i}} =\sum_{x,h_1,\ldots,h_s\in\mathbb Z} \prod_{\omega\in\{0,1\}^s}\mathcal C^{|\omega|}f_{i_\omega}(x+\omega_1h_1+\cdots+\omega_sh_s).
\]
By multilinearity, we have
\begin{equation} \label{eq:f1InormLambdai}
  \|f\1_I\|_{U^s(\mathbb Z)}^{2^s}=\sum_{\boldsymbol{i}}\Lambda_{\boldsymbol{i}}.
\end{equation}

We first count the tuples $\boldsymbol{i}$ for which
$\Lambda_{\boldsymbol{i}}$ can be nonzero. Suppose that such a tuple $\boldsymbol{i}$ is fixed and let $x,h_1,\ldots, h_s$ be such that $\prod_{\omega\in \{0,1\}^s}\mathcal{C}^{|\omega|}f_{i_{\omega}}(x+\omega_1h_1+\cdots+\omega_sh_s)\neq 0$. Writing, for $\omega \in \{0, 1\}^s$,
\[
  n_\omega=x+\omega_1h_1+\cdots+\omega_sh_s,
\]
we have
\[
  n_\omega=(1-|\omega|)n_{\boldsymbol{0}}+\sum_{r=1}^s\omega_r n_{\boldsymbol{e}_r}.
\]
The condition $n_\omega\in I_{i_\omega}$ gives
\[
  i_\omega=(1-|\omega|)i_{\boldsymbol{0}}+\sum_{r=1}^s\omega_r i_{\boldsymbol{e}_r}+O_s(1).
\]
Therefore, once
\[
  i_{\boldsymbol{0}},i_{\boldsymbol{e}_1},\ldots,i_{\boldsymbol{e}_s}
\]
have been chosen, there are only $O_s(1)$ possibilities for every
remaining $i_\omega$. Hence at most $O_s(m^{s+1})$
tuples $\boldsymbol{i}$ contribute.

For each such tuple, the Gowers--Cauchy--Schwarz inequality gives
\begin{equation} \label{eq:afterGCS}
  |\Lambda_{\boldsymbol{i}}|\leq \prod_{\omega\in\{0,1\}^s} \|f_{i_\omega}\|_{U^s(\mathbb Z)}.
\end{equation}

Writing
\begin{equation*}
  \eta=
  \max_{\substack{I'\subseteq I\text{ an interval}\\ L/2<|I'|\leq L}}
  \|f\|_{U^s(I')},
\end{equation*}
we have
\[
  \|f_j\|_{U^s(\mathbb Z)}\leq \eta\|\1_{I_j}\|_{U^s(\mathbb Z)}\ll_s \eta\ell^{(s+1)/2^s}.
\]
Combining this with~\eqref{eq:afterGCS} yields
\[
  |\Lambda_{\boldsymbol{i}}|\ll_s \eta^{2^s}\ell^{s+1}.
\]
Summing over the $O_s(m^{s+1})$ possible tuples $\boldsymbol{i}$ that contribute non-trivially to~\eqref{eq:f1InormLambdai}, we obtain
\begin{equation} \label{eq:F1Iest}
  \|f\1_I\|_{U^s(\mathbb Z)}^{2^s}\ll_s\eta^{2^s}m^{s+1}\ell^{s+1}=\eta^{2^s}H^{s+1}.
\end{equation}
Finally,
\begin{equation} \label{eq:1Iest}
  \|\1_I\|_{U^s(\mathbb Z)}^{2^s}\asymp_s H^{s+1}.
\end{equation}
Dividing~\eqref{eq:F1Iest} by~\eqref{eq:1Iest} and taking $2^s$-th roots proves the claim.
\end{proof}

\begin{proof}[Proof of Theorem~\ref{thm:intro-uniformity} assuming Theorem~\ref{thm:discorrelation}]
Let $\eta=\min\{\varepsilon/2,1/10\}$. For $H\leq X^{1-\eta}$, the result follows from Theorem~\ref{thm:discorrelation}, with $\eta$ in place of $\varepsilon$, by the inverse theorem for the Gowers norms and the transference argument in \cite[Proof of Theorem~1.5 in Section~9]{MSTT}, with just $5/8$ replaced by $3/5$.

Suppose now that $H>X^{1-\eta}$ and set $L=X^{1-\eta}/2$. By Lemma~\ref{lem:gowers-gluing}, it suffices to show that whenever $I'=(Y,Y+H']\subseteq(X,X+H]$ is an interval with
\[
  L/2<H'\leq L,
\]
we have
\begin{equation} \label{eq:Gowerslongishclaim}
   \|\Lambda-\Lambda_X^\sharp\|_{U^s(I')} = o_{s}(1).
 \end{equation}
 
Now $X\leq Y\leq2X$ and, for all sufficiently large $X$,
\[
  Y^{3/5+\eta}\leq H'\leq Y^{1-\eta}.
\]
The first part of the proof, applied with base point $Y$, therefore gives
\[
  \|\Lambda-\Lambda_Y^{\sharp}\|_{U^s(I')}=o_{s,\varepsilon}(1).
\]
Let $w=w(X)$ tend to infinity sufficiently slowly and let $\Lambda_w$ be as in \cite[Section~9]{MSTT}. The argument proving \cite[(9.17)]{MSTT}, using \cite[Lemma~9.6(i) and Remark~9.2]{MSTT}, is uniform in the sieve level permitted there. Applied with base point $Y$ to the sieve levels defining $\Lambda_X^\sharp$ and $\Lambda_Y^\sharp$, it gives
\[
  \|\Lambda_X^\sharp-\Lambda_w\|_{U^s(I')}+\|\Lambda_Y^\sharp-\Lambda_w\|_{U^s(I')}=o_{s,\varepsilon}(1)
\]
uniformly for such $I'$. The triangle inequality now gives~\eqref{eq:Gowerslongishclaim}, and thus Theorem~\ref{thm:intro-uniformity} follows.
\end{proof}

\subsection{Combinatorial decomposition}

The following elementary lemma is the combinatorial decomposition needed after applying Heath-Brown's identity.

\begin{lemma}[Type I, Type $\mathrm{I}_2$, Type II trichotomy]\label{lem:comb3/5}
Let $k\geq 1$ and let $\alpha_1,\ldots,\alpha_k\geq0$ with $\sum_{j\leq k}\alpha_j=1$. For $J\subseteq\{1,\ldots,k\}$ write $\alpha_J=\sum_{j\in J}\alpha_j$. At least one of the following alternatives holds.
\begin{enumerate}[label=(\roman*)]
\item $\alpha_j\geq2/5$ for some $j$.
\item $\alpha_j+\alpha_\ell\geq3/5$ for some distinct $j,\ell$.
\item There is a subset $J^*$ with $2/5\leq\alpha_{J^*}\leq3/5$, and there is a partition $\{1,\ldots,k\}=J_1\sqcup J_2\sqcup J_3$ such that $\alpha_{J_r}\in [1/10, 1/2]$ for $r=1,2,3$.
\end{enumerate}
\end{lemma}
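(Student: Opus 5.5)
The plan is to argue by contraposition. Assume (i) and (ii) both fail, so that
\[
\alpha_j<2/5 \text{ for every } j, \qquad \alpha_j+\alpha_\ell<3/5 \text{ for all distinct } j,\ell,
\]
and construct the objects required in (iii). Everything rests on one elementary device, which I will call the \emph{crossing argument}. If we add indices to a set one at a time, and every index added has weight at most $\delta$, then the first time the running sum reaches a threshold $c$ it lies in $[c,c+\delta)$. Call an index \emph{big} if $\alpha_j\geq1/5$ and \emph{small} otherwise.

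\textbf{Step 1: finding $J^*$.} I would split into three cases according to the number of big indices.
\begin{itemize}
\item If there are two big indices $j\neq\ell$, then $\alpha_j+\alpha_\ell\in[2/5,3/5)$ by the failure of (ii), so $J^*=\{j,\ell\}$ works.
\item If there is exactly one big index $j$, start from $\{j\}$. Its weight is below $2/5$. Then add small indices one at a time. The total eventually reaches $1\geq2/5$, and each increment is $<1/5$. So the first crossing of $2/5$ lands in $[2/5,3/5)$.
\item If there are no big indices, do the same starting from the empty set.
\end{itemize}

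\textbf{Step 2: the partition.} The set $J^*$ and its complement both have weight in $[2/5,3/5]$, so one of them has weight in $[2/5,1/2]$; take that one as $J_1$. The rest $R$ then has $\alpha_R\in[1/2,3/5]$. We must split $R=J_2\sqcup J_3$ with both parts in $[1/10,1/2]$. Since $\alpha_R-1/10\leq1/2$ and $\alpha_R-1/2\leq1/10$, this is equivalent to finding $S\subseteq R$ with
\[
\alpha_S\in[1/10,\alpha_R-1/10],
\]
an interval of length $\alpha_R-1/5\geq3/10$. I would add the elements of $R$ in increasing order of weight. If every element added up to the first crossing of $1/10$ has weight $\leq3/10$, then that crossing lands in $[1/10,2/5)\subseteq[1/10,\alpha_R-1/10]$. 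Since the order is increasing, the only way this can fail is that the crossing element $x$ has $\alpha_x>3/10$, while the elements before it sum to less than $1/10$. In that case we try $S=\{x\}$. This works as long as $\alpha_R-\alpha_x\geq1/10$, because $\alpha_x>3/10\geq1/10$.

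\textbf{Main obstacle.} The delicate subcase, which I expect to be the only real difficulty, is
\[
\alpha_x>3/10, \qquad \alpha_{R\setminus\{x\}}<1/10.
\]
Here $R$ is essentially one large element $x$ of weight $<2/5$ plus a little dust, and $R$ cannot be split as required. The plan is to fall back to a different $J_1$.
\begin{itemize}
\item Put $J_2=\{x\}$, so $\alpha_{J_2}\in(3/10,2/5)$.
\item Split the remaining weight $1-\alpha_x\in(3/5,7/10)$ into two parts, each in $[1/10,1/2]$.
\item This is again the crossing argument: add the remaining indices in increasing order to cross the target $1-\alpha_x-1/2$, or $1/10$ if that is larger.
\item The target window has length at least $1/10$. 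Two distinct remaining indices cannot both be heavy enough to jump it, since $\alpha_x+\alpha_y<3/5$ bounds every other index below $3/10$.
\end{itemize}
Checking this window bookkeeping in the bad subcase is where I would spend the care. If it turns out to be tight, the fallback is to also allow swapping dust between $J_1$ and $J_3$ to adjust the sums.
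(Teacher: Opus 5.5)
Your proposal is correct and follows essentially the paper's route: a greedy crossing gives $J^*$ with $\alpha_{J^*}\in[2/5,3/5)$, whichever of $J^*$ and $(J^*)^c$ has weight at most $1/2$ becomes $J_1$, and the remainder $R$, with $\alpha_R\in[1/2,3/5]$, is then split. Your ``main obstacle'' cannot occur, because $\alpha_R\geq 1/2$ and $\alpha_x<2/5$ (failure of (i)) force $\alpha_{R\setminus\{x\}}=\alpha_R-\alpha_x>1/10$, so $S=\{x\}$ always works---this is exactly the paper's step of taking $J_2=\{j'\}$ for any $j'\in J_1^c$ with $\alpha_{j'}\geq 1/10$---and the fallback can be deleted (as written, its window-length bound of $1/10$ would not beat increments of size up to $3/10$ anyway).
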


\begin{proof}
  We assume that both \emph{(i)} and \emph{(ii)} fail and will show that case \emph{(iii)} must hold. We begin by finding a set $J^*$ such that $2/5\leq\alpha_{J^*}\leq3/5$. Without loss of generality, suppose $\alpha_1\geq \alpha_2\geq \cdots \geq\alpha_k\geq 0$.

  If $\alpha_2\geq 1/5$, then we set $J^* =\{1,2\}$ and the desired bound follows from $\alpha_1\geq \alpha_2$ and the assumption that $\alpha_1+\alpha_2<3/5$.

  Otherwise, let $j^*$ be the least index such that $\sum_{1\leq j\leq j^*} \alpha_j\geq 2/5$ and set $J^*=\{1,2,\dots,j^*\}$. Since we are not in case \emph{(i)}, we have $j^*\geq 2$, and hence $\alpha_{j^*} \leq \alpha_2 < 1/5$. Therefore,
\[
\frac{2}{5} \leq \alpha_{J^*} = \sum_{1\leq j<j^*} \alpha_j+ \alpha_{j^*} < \frac{2}{5} + \frac{1}{5} = \frac{3}{5}.
\]
Therefore, such a set $J^*$ exists in both cases.

Set $J_1=J^*$ if $\alpha_{J^*}\leq1/2$, and otherwise set $J_1=(J^*)^c$. Then $2/5\leq\alpha_{J_1}\leq1/2$ and $1/2\leq\alpha_{J_1^c}\leq3/5$. Moreover, $|J_1^c|\geq2$, since we are not in case \emph{(i)}. It remains to partition $J_1^c$ into two sets $J_2$ and $J_3$ satisfying the required bounds. First suppose that there exists $j'\in J_1^c$ such that $\alpha_{j'}\geq1/10$. Then set $J_2=\{j'\}$ and $J_3=J_1^c\backslash J_2$. In this case $1/10\leq\alpha_{J_2}<2/5$, since we are not in case $(i)$, and
\[
  1/10<\alpha_{J_3}=\alpha_{J_1^c}-\alpha_{J_2}\leq1/2.
\]
Otherwise, $\alpha_j<1/10$ for all $j\in J_1^c$. Choose $J_2\subset J_1^c$ minimally such that $\sum_{j\in J_2}\alpha_j\geq1/10$ and set $J_3=J_1^c\backslash J_2$. Then $\alpha_{J_2}<1/5$ and hence
\[
  3/10<\alpha_{J_3}=\alpha_{J_1^c}-\alpha_{J_2}\leq1/2.
\]
This proves that case \emph{(iii)} always occurs when $(i)$ and $(ii)$ fail.
\end{proof}

\begin{definition}\label{def:admissible-trilinear}
  We call $a \colon \mathbb{N} \to \mathbb{R}$ an \emph{admissible trilinear component} if there exist exponents $\gamma_1, \gamma_2, \gamma_3 \in [1/10, 1/2]$ and functions $\alpha_1, \alpha_2, \alpha_3 \colon \mathbb{N} \to \mathbb{R}$ such that the following conditions hold.
\begin{enumerate}[label=(\roman*)]
\item We have
\begin{equation}\label{eq:admissible-trilinear-exponents}
 \frac1{10}\leq\gamma_2\leq\gamma_3\leq\gamma_1\leq\frac12,
 \qquad \gamma_1+\gamma_2+\gamma_3=1.
\end{equation}
and
  \[
  a=\alpha_1*\alpha_2*\alpha_3.
\]
\item The functions $\alpha_j$ are such that, for some $Z\asymp X$, we have
\[
 \operatorname{supp}(\alpha_j)
 \subseteq[Z^{\gamma_j},2Z^{\gamma_j}].
\]
Moreover, each $\alpha_j$ has the form
\[
 \alpha_j(n)=\1_{I_j}(n)(b_{j,1}*\cdots*b_{j,\ell_j})(n),
 \qquad I_j\subseteq[Z^{\gamma_j},2Z^{\gamma_j}],
\]
where $I_j$ is an interval, $\ell_1+\ell_2+\ell_3\leq20$, every $b_{j,r}$ is one of
\[
 \1_J,\qquad (\log \cdot)\1_J ,\qquad \mu\1_J
\]
for an interval $J\subseteq(N,2N]$, at most one of all the elementary factors is logarithmically weighted, and a M\"obius factor has $N\ll Z^{1/10}$.
\end{enumerate}
\end{definition}

We now use Heath-Brown's identity to reduce the M\"obius and von Mangoldt functions to Type I, $\mathrm I_2$ and II sums, in the sense of~\cite[Definition~4.1]{MSTT}.

\begin{proposition}[Heath-Brown reduction]\label{prop:HB-reduction}
Each of the functions $\mu$ and $\Lambda$ admits a representation on $(X,X+H]$ as a linear combination of $O(\log^{O(1)} X)$ sums with coefficients satisfying $|a(n)|\ll d_{20}(n)\log X$ of the following forms.
\begin{enumerate}[label=(\alph*)]
\item \text{(Type I sums)}
We have $a=\alpha*\beta$, where
$$
  \operatorname{supp}(\alpha)\subset[1,X^{3/5}\log^{O(1)} X],
  \qquad
  \sum_{n\leq A}|\alpha(n)|^2\ll A\log^{O(1)} X
$$
for every $A \geq 1$, and the factor $\beta$ is a dyadic $1$-weighted or logarithmically weighted factor, and $\|\beta\|_{\TV(\mathbb N)}\ll\log^{O(1)} X$.
\item \text{(Type $I_2$ sums)}
We have $a=\alpha*\beta_1*\beta_2$, where
$$
  \operatorname{supp}(\alpha)
  \subset [1,X^{2/5}\log^{O(1)} X] \qquad
  \sum_{n\leq A}|\alpha(n)|^2\ll A\log^{O(1)} X,
$$
for every $A \geq 1$, and
$$
  \|\beta_1\|_{\TV(\mathbb N)} +\|\beta_2\|_{\TV(\mathbb N)} \ll\log^{O(1)} X.
$$
\item \text{(Type II sums)} We have $a=\alpha*\beta$, where $\textnormal{supp}(\alpha)\subseteq (M,2M]$, $\textnormal{supp}(\beta)\subseteq (N,2N]$ and
$$
 M,N\in  [X^{2/5}\log^{-O(1)} X, X^{3/5}\log^{O(1)} X],
$$
and moreover the coefficient sequences satisfy $\sum_{M < n\leq 2M}|\alpha(n)|^2\ll M\log^{O(1)} X$ and $\sum_{N < n \leq 2N}|\beta(n)|^{2k}\ll N\log^{O(1)} X$ for $k=1,2$.

Furthermore, every Type II component may be taken to be an admissible trilinear component.
\end{enumerate}

Furthermore, if we define
\[
 \Lambda_{X,\mathrm I}^{\sharp}(n)
 \coloneqq
 \frac{\Pcal(R)}{\varphi(\Pcal(R))}
 \sum_{\substack{d\leq X^{3/10}\\ d\mid(n,\Pcal(R))}}\mu(d),
\]
then this function has a Type I representation as above and
\begin{equation}\label{eq:cramer-type-I-error}
 \sum_{X<n\leq X+H}
 \left|\Lambda_X^\sharp(n)-\Lambda_{X,\mathrm I}^\sharp(n)\right|
 \ll H\exp\bigl(-c \log^{1/20} X\bigr).
\end{equation}
\end{proposition}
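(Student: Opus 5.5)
We would start from Heath-Brown's identity with $K=20$ components. For $n\leq 2X$ it writes $\mu(n)$ (and $\Lambda(n)$) as a linear combination of $O(1)$ convolutions $\mu\1_{\leq z}*\cdots*\mu\1_{\leq z}*1*\cdots*1$, with one factor of the form $\log$ in the $\Lambda$ case. Here $z=(2X)^{1/20}$, so every M\"obius factor is supported on $n\ll X^{1/20}\ll Z^{1/10}$.

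Next we split each variable dyadically into ranges $(N_i,2N_i]$. This gives $O(\log^{O(1)}X)$ pieces, and we record the exponents $\alpha_i=\log N_i/\log X$. Since the product of the variables lies in $(X,X+H]$, we have $\sum\alpha_i=1+O(1/\log X)$. After normalising we apply Lemma~\ref{lem:comb3/5}, allowing $O(1/\log X)$ fudge in the thresholds; this fudge is what produces the $\log^{O(1)}X$ factors in the support conditions.

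We then treat the three alternatives of the lemma in turn.
\begin{enumerate}[label=(\roman*)]
\item Some $\alpha_j\geq 2/5$. Any variable that large must be a smooth $1$- or $\log$-weighted factor, because M\"obius factors have $\alpha\leq 1/20$. We take $\beta$ to be that factor and $\alpha$ the convolution of the rest. Then $\alpha$ is supported on $[1,X^{3/5}\log^{O(1)}X]$, its coefficients are bounded by $d_{20}$, so $\sum_{n\leq A}|\alpha|^2\ll A\log^{O(1)}X$, and $\beta$ clearly has bounded total variation up to logs. This is a Type I sum.
\item Some pair has $\alpha_j+\alpha_\ell\geq 3/5$. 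If each of the two exceeds $1/20$, both are smooth factors, and the rest has support up to $X^{2/5}$; this is Type $\mathrm I_2$. Otherwise one of them exceeds $11/20\geq 2/5$ and we are back in case (i).
\item We group the variables into $J_1,J_2,J_3$, each of size in $[1/10,1/2]$, and set $\alpha_r$ to be the convolution over $J_r$. Order them so that $\gamma_2\leq\gamma_3\leq\gamma_1$. Reading off the $J^*$ structure gives a split $M=X^{\alpha_{J^*}}$, $N=X^{1-\alpha_{J^*}}$ with both in $[X^{2/5},X^{3/5}]$, up to logs. The second and fourth moment bounds then follow from divisor bounds, since $\sum d_{20}(n)^{4}\ll N\log^{O(1)}X$.
\end{enumerate}

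Two technical points arise in case (iii). First, the dyadic pieces of a convolution do not automatically lie in a single dyadic window $[Z^{\gamma_j},2Z^{\gamma_j}]$ as the admissibility definition requires. We handle this by a further splitting, reading $I_j$ as the localising interval and taking $Z$ to be a suitable value $\asymp X$. Second, we must check the count: there are at most $20$ elementary factors and at most one $\log$ weight, so the component is admissible.

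For the Cram\'er model, we expand $\1_{(n,\Pcal(R))=1}=\sum_{d\mid(n,\Pcal(R))}\mu(d)$. Truncating to $d\leq X^{3/10}$ gives a Type I sum with $\alpha(d)=\mu(d)\1_{d\mid\Pcal(R)}$ and $\beta=1$, split dyadically; the support $X^{3/10}$ is well below $X^{3/5}$. The remaining task, and the main obstacle, is the bound \eqref{eq:cramer-type-I-error}. The difference $\Lambda_X^\sharp-\Lambda_{X,\mathrm I}^\sharp$ is at most a sum over $R$-smooth $d>X^{3/10}$ with $d\mid n$, weighted by $\Pcal(R)/\varphi(\Pcal(R))\ll\log R$. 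Summing over $n$ in the short interval gives roughly
\[
\sum_{\substack{d>X^{3/10}\\ p\mid d\Rightarrow p<R}}\mu^2(d)\Bigl(\frac{H}{d}+1\Bigr).
\]

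The $H/d$ part we bound by Rankin's trick. For small $\delta>0$,
\[
\sum_{d>X^{3/10},\,P^+(d)<R}\frac{\mu^2(d)}{d}\leq X^{-3\delta/10}\prod_{p<R}\bigl(1+p^{\delta-1}\bigr).
\]
Taking $\delta\asymp\log\log X/\log R$, or similar, makes this $\exp(-c\log^{9/10}X)$, comfortably within the claim.

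The $+1$ part is the real difficulty: the count of $R$-smooth $d$ could exceed $H$. We would not use this crude bound. Instead we would use \cite[Lemma~9.6]{MSTT}, or a fundamental-lemma sieve: replace the sharp truncation by the observation that any $n$ with $(n,\Pcal(R))$ exceeding $X^{3/10}$ has a large $R$-smooth divisor. Counting such $n$ in $(X,X+H]$ via divisors $d\in(X^{3/10},X^{3/10}R]$ gives a bound of the form $H\cdot\Psi$-density. Together with a standard smooth-number estimate (density $u^{-u}$ with $u=\log X^{3/10}/\log R\asymp\log^{9/10}X$), this yields $H\exp(-c\log^{1/20}X)$ after accounting for the $\log^{O(1)}$ losses.
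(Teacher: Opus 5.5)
Your route is the paper's: Heath-Brown's identity, dyadic splitting, Lemma~\ref{lem:comb3/5} on the exponents, and divisor bounds for the moments. For the Cram\'er part the paper simply cites \cite[(4.7)--(4.8)]{MSTT}, so your direct argument is an alternative there. However, two steps fail as written.

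\textbf{The choice $K=20$.} The $j$-th term of Heath-Brown's identity has $j$ M\"obius factors and $j-1$ copies of $1$, plus a $\log$ factor for $\Lambda$. So with $K=20$ a component can have $39$ or $40$ elementary factors. Your claim ``at most $20$ elementary factors'' is therefore false. With it go the coefficient bound $|a(n)|\ll d_{20}(n)\log X$ and the requirement $\ell_1+\ell_2+\ell_3\leq 20$ in Definition~\ref{def:admissible-trilinear}.

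The paper takes $L=10$ in \cite[Lemma~2.16]{MSTT}, i.e.\ $K=10$. This gives at most $20$ factors, and M\"obius factors of length $\ll X^{1/10}\ll Z^{1/10}$, which is exactly what admissibility allows. Your case analysis still goes through with $1/10$ and $1/2$ in place of $1/20$ and $11/20$. A M\"obius exponent is at most $1/10+o(1)<2/5$. If one member of a pair with sum $\geq 3/5$ is M\"obius, the other is $\geq 1/2-o(1)$, so you are back in case (i).

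\textbf{The weights in \eqref{eq:cramer-type-I-error}.} It is not enough to count the $n$ with $m=(n,\Pcal(R))>X^{3/10}$. Your divisor in $(X^{3/10},RX^{3/10}]$ plus Rankin does count them correctly, with the $+1$ terms contributing $\ll RX^{3/10}=o(H)$. But at such $n$ the difference equals $\frac{\Pcal(R)}{\varphi(\Pcal(R))}\bigl|\sum_{k\mid m,\,k>X^{3/10}}\mu(k)\bigr|$, which is only bounded by $d(m)\log R$. The pointwise bound $d(m)\leq\exp(O(\log X/\log\log X))$ swamps the saving $\exp(-c\log^{9/10}X)$. So the losses are not $\log^{O(1)}X$ pointwise, as your last sentence assumes.

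You can fix this by Cauchy--Schwarz against Shiu's bound $\sum_{X<n\leq X+H}d(n)^2\ll H\log^3X$. More cleanly, set $\delta=1/\log R$ and $D=X^{3/10}$, and use
\[
\1_{m>D}\,d(m)\leq D^{-\delta}\,d(m)\,m^{\delta}.
\]
Then apply Shiu's theorem to the multiplicative function $g(n)=d((n,\Pcal(R)))\,(n,\Pcal(R))^{\delta}$, which satisfies $g(p^j)\leq 2p^{\delta}\leq 6$ and $g(n)\ll_\eps n^{\eps}$. This gives $\ll H\log^{O(1)}X\exp(-\tfrac{3}{10}\log^{9/10}X)$ in one step. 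It also makes the separate treatment of the $+1$ terms unnecessary, since no sum over divisors is ever formed.
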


\begin{proof}
Apply the Heath-Brown identities for $\Lambda$ and $\mu$ in \cite[Lemma~2.16]{MSTT}, with $L=10$, and subdivide every factor dyadically. That lemma gives at most $20$ factors, each of type $\1_J$, $(\log \cdot)\1_J$, or $\mu\1_J$, with at most one logarithmic factor and with every M\"obius factor of length $\ll X^{1/10}$. Lemma~\ref{lem:comb3/5}, applied to the resulting support exponents and followed by a dyadic subdivision of each of the three grouped product variables, produces respectively the Type I alternative in Proposition~\ref{prop:HB-reduction}\textup{(a)}, the Type $\mathrm I_2$ alternative in Proposition~\ref{prop:HB-reduction}\textup{(b)}, and the admissible trilinear alternative in Definition~\ref{def:admissible-trilinear}, with the moment estimates following from the standard divisor bounds, while the assertion concerning $\Lambda_{X,\mathrm I}^{\sharp}$ and its error is \cite[(4.7)--(4.8)]{MSTT}.
\end{proof}

\subsection{Trilinear sums}

The new arithmetic input leading to our improvement is the following trilinear estimate.

\begin{proposition}[High frequency trilinear major arcs at the $3/5$ scale]\label{prop:typeII-major}
Let $A,B\geq1$ and $\eps>0$ be fixed. Let
$$
  X^{3/5+\eps}\leq H\leq X^{1-\eps},
$$
and let $a_n$ be any admissible trilinear component arising from Proposition~\ref{prop:HB-reduction}\textup{(c)}. There is a constant $C_A=C(A,B,\eps)>0$ such that, uniformly for
$$
 \frac{X}{H} \log^{C_A} X\leq |T|\leq X^B,
$$
one has
\begin{equation} \label{eq:MajorArc3/5claim}
  \left\lvert\sum_{X<n\leq X+H}a_n n^{iT}\right\rvert^{\,*} \ll_{A,B,\eps} \frac{H}{\log^A X}.
\end{equation}
\end{proposition}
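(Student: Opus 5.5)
We will follow the standard Perron-formula route, as in the proof of \cite[Lemma~3.5]{MSTT}, and reduce \eqref{eq:MajorArc3/5claim} to a mean value of a product of three Dirichlet polynomials over a frequency window of length about $X/H$ around $T$. The window is then handled by Lemma~\ref{lem:BHP-trilinear-estimates}.

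\textbf{Reduction to Dirichlet polynomials.} Since $a=\alpha_1*\alpha_2*\alpha_3$ with $\alpha_j$ supported on $[Z^{\gamma_j},2Z^{\gamma_j}]$, we first handle the maximal progression norm. Restriction to an arithmetic progression of modulus $q\leq \log^{O(1)}X$ is expanded into Dirichlet characters. Large moduli give a trivially acceptable contribution, exactly as in \cite{MSTT}. For the short interval $(X,X+H]$ we apply truncated Perron's formula with height $T_1'=X^{1+o(1)}/H\leq X^{2/5-\eps}$. This expresses $\sum_{X<n\leq X+H}a_n\chi(n) n^{iT}$ as
\[
\int_{|u|\leq T_1'} A_1(\tfrac12+i(u-T))A_2(\tfrac12+i(u-T))A_3(\tfrac12+i(u-T))\,\frac{(X+H)^{1/2+iu}-X^{1/2+iu}}{1/2+iu}\,\dd u
\]
plus an acceptable error. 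Here $A_j(s)=\sum_n\alpha_j(n)\chi(n)n^{-s}$. The kernel is $\ll H X^{-1/2}$, so it suffices to show
\[
\int_{\mathcal T}|A_1A_2A_3(\tfrac12+it)|\,\dd t\ll X^{1/2}\log^{-A}X,\qquad \mathcal T=[-T-T_1',-T+T_1'].
\]
Shifting by $T$, this is an integral over $[-T_1',T_1']$ of polynomials with twisted coefficients $\alpha_j(n)\chi(n)n^{-iT}$. These twisted coefficients still satisfy $|a_j(n)|\ll d_B(n)\log^BX$.

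\textbf{Applying the trilinear lemma.} Order the exponents as $\gamma_2\leq\gamma_3\leq\gamma_1$ with $\gamma_j\in[1/10,1/2]$, matching Lemma~\ref{lem:BHP-trilinear-estimates}. The only nontrivial hypothesis to check is the pointwise bound \eqref{eq:BHPsup} for the factor with exponent $\gamma_2$. The key point is that $|u-T|\geq |T|-T_1'\gg (X/H)\log^{C_A}X$ throughout the window. By Definition~\ref{def:admissible-trilinear}, $\alpha_2$ is a restricted convolution of at most $20$ elementary factors $\1_J$, $\log\cdot\,\1_J$ and $\mu\1_J$.

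\begin{itemize}
\item If $\alpha_2$ contains a smooth factor $\1_J$ or $(\log\cdot)\1_J$ of length $N\geq X^{\eps'}$, that factor satisfies
\[
\sum_{n\in J}\chi(n)n^{-1/2-it}\ll N^{1/2}\log^{-C}X \qquad\text{for } X^{\eps'/2}\ll |t|\leq X^{B+1}.
\]
This is a Vinogradov--Korobov type zeta/$L$-function bound in the form \cite[Lemma~3.9(i)]{MSTT}, which gives a saving of any power of $\log$. The remaining factors are bounded trivially, by the divisor bound with a $\log$-power loss. This is harmless, because the saving for the smooth factor is an arbitrary power of $\log$.
\item If $\alpha_2$ is built only from very short factors (each below $X^{\eps'}$) or M\"obius factors of length at most $Z^{1/10}$, we do not get a pointwise bound for the factor itself. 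Instead we regroup. The Heath-Brown decomposition has at most $20$ factors, and its smooth factors of length exceeding $X^{1/10}$ can be chosen as $A_2$ after re-running Lemma~\ref{lem:comb3/5}. Alternatively, if $\alpha_1$ or $\alpha_3$ carries such a smooth factor, we interchange roles using the symmetric analogue of Lemma~\ref{lem:BHP-trilinear-estimates}; its proof only used Proposition~\ref{prop:one-var-check}, which needs a pointwise bound for some $j_0$.
\end{itemize}

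\textbf{Main obstacle and the remaining regions.} The genuine difficulty is the case where the pointwise saving sits on a factor that the lemma did not designate. In particular, this includes the Baker--Harman--Pintz region $\beta_1-\beta_3\leq1/5$, where \cite[Lemma~7.3]{HarmanBook} needs the saving on the middle factor. I would first try to show that the combinatorial step can always arrange for the smooth factor to land in $J_2$. Failing that, I would handle the leftover configurations directly via Proposition~\ref{prop:one-var-check}, with $j_0$ the smooth factor, and Lemma~\ref{le:2/5comput}. Configurations with no smooth factor longer than $X^{1/10}$ have every component composed of short pieces. For these, the refinement step in Lemma~\ref{lem:comb3/5} can be adjusted so that some factor is the smooth piece.

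Finally, summing over $O(\log^{O(1)}X)$ progressions and characters gives \eqref{eq:MajorArc3/5claim}, with $C_A$ chosen large enough for the $L$-function bound.
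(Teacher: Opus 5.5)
Your outline (restrict to a long progression, expand in characters, apply truncated Perron at height about $X/H$, shift by $T$, then use Lemma~\ref{lem:BHP-trilinear-estimates} with the saving on the shortest block) matches the paper's. However, the step you call the main obstacle rests on a missing idea, and your workaround does not close it.

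You assert that when the $\gamma_2$-block consists of short pieces, or of M\"obius factors of length at most $Z^{1/10}$, no pointwise bound is available. In fact \eqref{eq:BHPsup} holds for the shortest block of \emph{every} admissible component.
\begin{itemize}
\item Since $\alpha_2$ is a convolution of at most $20$ elementary factors supported on $[Z^{\gamma_2},2Z^{\gamma_2}]$ with $\gamma_2\geq 1/10$, some elementary factor has length $\gg Z^{1/200}$. So ``all pieces very short'' never happens.
\item Fix the other variables, distribute the divisibility by $d_2$, and remove the coprimality conditions by M\"obius inversion, at polylogarithmic cost. Then that factor is an interval sum weighted by $1$, $\log$ or $\mu$, times a character of polylogarithmic modulus.
\item For $Z^{\eps/3}\leq |t-T|\leq Z^{B+1}$ such a sum saves an arbitrary power of $\log X$ in all three cases. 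For $\mu$ this comes from the Vinogradov--Korobov zero-free region for $L(s,\chi)$ at height $Z^{O(1)}$, which gives a saving $\exp(-\log^{1/3-o(1)}Z)$ for a sum of length $Z^{c}$. This is the content of \cite[Lemma~3.9(i),(ii)]{MSTT}.
\end{itemize}
Bounding the other variables trivially then gives \eqref{eq:BHPsup} for $\mathcal M_2$. Lemma~\ref{lem:BHP-trilinear-estimates} therefore applies directly in both the parallelogram and triangular regions, with no regrouping.

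Without this idea, your fallback fails. Lemma~\ref{lem:BHP-trilinear-estimates} is not symmetric in its factors. When $\beta_1-\beta_3\leq 1/5$, its proof uses the Baker--Harman--Pintz parallelogram estimate, which needs the saving on the shortest factor; only the triangular region goes through Proposition~\ref{prop:one-var-check}. Your claim that its proof ``only used'' that proposition is therefore incorrect.

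Moreover, Lemma~\ref{le:2/5comput} only covers $\beta_1\in[2/5,1/2]$ and $\beta_2,\beta_3\in[1/5,1/3]$. Consider a component with $(\gamma_1,\gamma_2,\gamma_3)=(9/20,1/10,9/20)$ whose $\gamma_2$-block is a single M\"obius factor. If the saving sits on a $9/20$-block, neither tool reaches it.

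Re-running or adjusting Lemma~\ref{lem:comb3/5} changes the decomposition rather than proving the statement for an arbitrary admissible component. It also cannot produce a smooth piece when none exists. The top term of Heath-Brown's identity for $\mu$ has components whose entire length sits in M\"obius factors of size about $X^{1/10}$.

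Two smaller fixes are also needed.
\begin{itemize}
\item When $(a,q)=d>1$, the condition $d\mid n$ must be split as $d_1d_2d_3=d$ across the three blocks; your $A_j$ only treat $d=1$.
\item The Perron height must be $(X/H)\log^{B_1}X$ with $B_1<C_A-2$, not $X^{1+o(1)}/H$. Otherwise your claim $|u-T|\geq |T|-T_1'\gg (X/H)\log^{C_A}X$ does not follow from the hypothesis $|T|\geq (X/H)\log^{C_A}X$.
\end{itemize}
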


\begin{proof}
Let
$P=\{b_0<n \leq b_0+L: n\equiv a \mod q\}\subseteq[X,X+H]$
be a progression on which the supremum in~\eqref{eq:starred-abs} is attained for~\eqref{eq:MajorArc3/5claim}. It suffices to show that
\begin{equation}\label{eq:typeII-major-progression}
\Big| \sum_{n \in P} a_n n^{iT}\Big|\ll_{A,B, \eps } \frac{H}{\log^A X}.
\end{equation}
By the divisor bound, $a_n \ll_\varepsilon X^{\varepsilon/4}$, so that
\begin{equation*}
\sum_{n \in P} |a_n| \ll_{A,B, \eps } X^{\varepsilon/4} \left(\frac{L}{q}+1\right).
\end{equation*}
This gives~\eqref{eq:typeII-major-progression} unless $L \geq H X^{-\varepsilon/2}$ and $q \leq X^{\varepsilon/2}$. 

Write $d=(a,q)$, $q=dq'$ and $a=da'$, so that $(a',q')=1$. Shiu's bound \cite[Theorem~1]{Shiu}, applied after $n=dm$, gives
\begin{align}\label{eq:shiu}
 \sum_{n\in P}|a_n|
 \ll \left(\frac{L}{d\varphi(q')}+1\right)d_{20}(d)\log^{O(1)} X \ll \frac{L}{q^{1/2}} \log^{O(1)} X.
\end{align}
It follows, on taking $A_1$ sufficiently large in terms of $A,B,\eps$, that \eqref{eq:typeII-major-progression} holds if $L<H\log^{-A_1} X$ or $q>\log^{A_1} X$. We may therefore assume that
\[
L\geq H\log^{-A_1} X \quad \text{ and }\quad q\leq \log^{A_1} X.
\]

By the orthogonality of Dirichlet characters, we have
\begin{align}
\sum_{ \substack{ b_0<n \leq b_0+L \\ n \equiv a \mod q}} a_n n^{iT} &= \sum_{ \substack{ b_0<n \leq b_0+L \\ d \mid n \\ \frac{n}{d} \equiv a' \mod {q'}}} a_{n} n^{iT}\nonumber\\
&= \frac{1}{ \varphi(q')} \sum_{\chi \mod{q'}} \overline{\chi(a')}\sum_{\substack{b_0 <n \leq b_0+L \\ d \mid n}} a_{n} \chi(n/d) n^{iT}. \label{eq:dm}
\end{align}

Write $u_0=\lfloor b_0\rfloor+1/2$ and $v_0=\lfloor b_0+L\rfloor+1/2$, and let $T_1 \coloneqq \frac{X}{H}\log^{B_1} X$ with $B_1$ sufficiently large. The truncated Perron formula then gives
\begin{align}\begin{aligned}\label{eq:Perron}
\sum_{n}a_n\1_{d\mid n}\chi(n/d)n^{iT}\1_{[u_0,v_0]}(n)&=\frac{1}{2\pi i}\int_{1/2-iT_1}^{1/2+iT_1}\frac{v_0^s-u_0^s}{s}\sum_{n}a_nn^{-s+iT}\1_{d\mid n}\chi(n/d)\dd s\\
\quad &+ O\left(\frac{X}{T_1}\log^{O(1)} X+X^{o(1)}\right),
\end{aligned}
\end{align}
where the error bound follows from Shiu's bound after a dyadic subdivision by distance from the half-integer endpoints. Note that
\[
\frac{v_0^{1/2+it}-u_0^{1/2+it}}{1/2+it} \ll \frac{L}{X^{1/2}}.
\]

Fix a representation $a=\alpha_1*\alpha_2*\alpha_3$ and parameters
$Z,\gamma_1,\gamma_2,\gamma_3$ as in
Definition~\ref{def:admissible-trilinear}.
For each factorisation $d_1d_2d_3=d$, set
\begin{align*}
 r_j&=\frac{d}{d_1\cdots d_j},\\
 \mathcal M_j(s)&=
 \sum_{\substack{d_j\mid n\\(n/d_j,r_j)=1}}
 \alpha_j(n)\chi(n/d_j)n^{-s+iT},
 \qquad 1\leq j\leq3.
\end{align*}
Note that the number of factorisations is $O(\log^{O_{A_1}(1)}X)$. Splitting the convolution variables successively according to
$d_1=(n_1,d)$ and $d_2=(n_2,d/d_1)$ yields
\begin{align*}
 \sum_{d\mid n}a_n\chi(n/d)n^{-s+iT}=
 \sum_{d_1d_2d_3=d}
 \mathcal M_1(s)\mathcal M_2(s)\mathcal M_3(s).
\end{align*}

The polynomials $\mathcal{M}_j(s)$ have supports on $[Z^{\gamma_j},2Z^{\gamma_j}]$ with the exponents in \eqref{eq:admissible-trilinear-exponents}. Thus, returning to~\eqref{eq:Perron}, we reduce matters to the normalised Dirichlet polynomial estimate, for some $A_2$ sufficiently large in terms of $A_1$, namely
\begin{equation}\label{eq:perron-reduction}
  \int_{-T_1}^{T_1}\abs{\mathcal M_1(1/2+it)\mathcal M_2(1/2+it)\mathcal M_3(1/2+it)}\dd t \ll_{A,B,\eps} \frac{Z^{1/2}}{\log^{A_2} X},
\end{equation}
uniformly for all Dirichlet characters $\chi\pmod{q'}$ and all divisor allocations. If $C_A>B_1+2$, then for $|t|\leq T_1$,
\[
 Z^{\eps/3}\leq |t-T|\leq Z^{B+1}.
\]

Recall that each $\alpha_j$ is a Dirichlet convolution of a bounded number of indicator sequences of intervals, or logarithmically weighted such indicators, or copies of M\"obius function. We open the elementary convolutions in the shortest factor $\mathcal M_2$. We distribute the divisibility condition among the elementary
variables by the same successive gcd decomposition and remove
the resulting coprimality conditions by M\"obius inversion.
This introduces only a polylogarithmic loss. One of the boundedly many factor polynomials has length $Z^{c+o(1)}$ for some $c=c(\eps)>0$; after the other variables are fixed, it remains an interval sum with coefficient $1$, $\mu$ or $\log$ and a character of polylogarithmic modulus. Applying e.g. \cite[Lemma~3.9(i),(ii)]{MSTT} with $X = 2Z^{B+1}$  (and partial summation for $\log$) to that factor, and the trivial bound to the others, gives
$$
  \sup_{t\in [-T_1, T_1]}\abs{\mathcal M_2(1/2+it)} \ll_{A,B,\eps} Z^{\gamma_2/2} \log^{-A'} X
$$
for every $A'>0$. Moreover, $T_1\leq Z^{2/5-\eps/3}$ for sufficiently large $X$, and the support and coefficient hypotheses of Lemma~\ref{lem:BHP-trilinear-estimates} hold. Hence that lemma applies with $Z$ in place of $X$. Taking $A'$ sufficiently large gives \eqref{eq:perron-reduction}. This completes the proof.
\end{proof}

\subsection{Proof of Theorem~\ref{thm:discorrelation}}

We first observe that the argument in \cite[Theorems~3.1 and~4.2]{MSTT} reduces the nilsequence discorrelation problem in the range $X^{3/5+\eps }\leq H \leq X^{1-\eps }$ to an abelian Type II obstruction.

\begin{proposition}[Nilsequence discorrelation up to an abelian
Type II obstruction]
\label{prop:non-abelian}
Let $\eps>0$ and $d,D,A\geq1$. Suppose that
\begin{equation*}
 2\leq X^{3/5+\eps}\leq H\leq X^{1-\eps}.
 \end{equation*}
Let $G/\Gamma$ be a filtered nilmanifold of degree at most $d$,
dimension at most $D$, and complexity at most $\log^A X$. Let $F\colon G/\Gamma\to\mathbb C$ have zero mean and satisfy
\begin{equation*}
 \|F\|_{\mathrm{Lip}}\leq\log^A X,
\end{equation*}
let $g:\mathbb Z\to G$ be polynomial, and let
\begin{equation*}
 f\in\{\Lambda-\Lambda_X^\sharp,\mu\}.
\end{equation*}
Then
\begin{equation}\label{eq:uniform-nonabelian-bound}
 \left|\sum_{X<n\leq X+H}f(n)F(g(n)\Gamma)\right|^*\ll_{d,D,A,\eps}\frac{H}{\log^A X},
\end{equation}
unless the following exceptional alternative holds.

There exist $A_1=C(d,D,A,\eps)>0$, an admissible trilinear component $h$
arising from Proposition~\ref{prop:HB-reduction}\textup{(c)} and a real number
\begin{equation*}
 |T|\leq (X/H)^{d+1}\log^{A_1} X
\end{equation*}
such that
\begin{align}
 \left|\sum_{X<n\leq X+H}f(n) n^{iT} \right|^*&\gg_{d,D,A,\eps}H\log^{-A_1} X,
 \label{eq:original-mult-correlation}\\
 \left|\sum_{X<n\leq X+H}h(n)n^{iT}\right|^*&\gg_{d,D,A,\eps}H\log^{-A_1} X.
 \label{eq:component-multiplicative-correlation}
\end{align}
\end{proposition}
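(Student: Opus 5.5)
We would follow the architecture of \cite[Theorems~3.1 and~4.2]{MSTT} essentially verbatim, substituting Proposition~\ref{prop:HB-reduction} for the $5/8$-scale decomposition used there. The first step is to use Proposition~\ref{prop:HB-reduction} to write $f$ on $(X,X+H]$ as a linear combination of $O(\log^{O(1)}X)$ Type I, Type $\mathrm I_2$ and Type II sums. For $f=\Lambda-\Lambda_X^\sharp$ we also replace $\Lambda_X^\sharp$ by $\Lambda^\sharp_{X,\mathrm I}$, which costs an admissible error by \eqref{eq:cramer-type-I-error} since $F$ is bounded by $\log^A X$. By the pigeonhole principle, if \eqref{eq:uniform-nonabelian-bound} fails then some component $h$ correlates with $F(g(n)\Gamma)$ in the starred sense, with loss at most a factor $\log^{O(1)}X$.

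Next we dispose of the Type I and Type $\mathrm I_2$ components. Their lengths are $X^{3/5}\log^{O(1)}X$ and $X^{2/5}\log^{O(1)}X$ respectively, and $H\geq X^{3/5+\eps}$. The nilsequence estimates for such sums in \cite[Section~5--6]{MSTT} (equidistribution of $g$ along long progressions together with the quantitative Leibman theorem) therefore give the bound $H\log^{-A'}X$ for every $A'$. Here we must check that the MSTT arguments only use the constraints that the smooth variable has length at least $X^{\eps}H/X$ for Type I, and at least $(X/H)^{1+\eps}$ squared for Type $\mathrm I_2$, and that these inequalities hold exactly in our range. The point is that $3/5$ plays the role of both thresholds, since $1-2/5=3/5$.

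Then we treat the Type II components, which are admissible trilinear components. Since $M,N\in[X^{2/5-o(1)},X^{3/5+o(1)}]$, the Type II nilsequence argument of \cite[Theorem~4.2]{MSTT} applies: Cauchy--Schwarz, the equidistribution theory on the nilmanifold, and the reduction of the non-abelian and non-major-arc cases to a negligible contribution. What survives is the case where $F(g(n)\Gamma)$ is, on $(X,X+H]$, essentially a character of the horizontal torus whose phase is locally approximated by $n^{iT}$ with $|T|\leq (X/H)^{d+1}\log^{A_1}X$, twisted by a short periodic function. Decomposing this periodic piece into Dirichlet characters of small modulus and absorbing them into the progression in the starred norm gives \eqref{eq:component-multiplicative-correlation}.

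The final step, and the one I expect to be the main obstacle, is to derive \eqref{eq:original-mult-correlation} for $f$ itself, not just for the component $h$. For this we would rerun the same reduction with $F(g(n)\Gamma)$ replaced by its approximant $n^{iT}$ and the periodic twist, and track the dependence exactly as MSTT do in deriving their ``abelian Type II obstruction''. The needed correlation of $f$ with $n^{iT}$ then follows by undoing the approximation, which requires the error in replacing the nilsequence by its Taylor approximation to be $O(H\log^{-A_1}X)$ on subprogressions of length $H\log^{-O(1)}X$. The delicate points are controlling the constants $A_1$ uniformly in the complexity $\log^A X$ and checking that no part of the MSTT argument used $H\geq X^{5/8}$ in an essential way beyond the combinatorial decomposition. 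We expect the latter to be confined to their Lemma on Type II large sieve ranges, which requires only $M,N\ll X^{3/5+o(1)}\leq HX^{-\eps/2}$, and this is satisfied here.
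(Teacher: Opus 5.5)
\textbf{Gap 1: no induction on dimension.} Your outline has no induction on $\dim G$. The steps where you dispose of the Type I and Type $\mathrm I_2$ components, and the non-abelian Type II case, outright are false as stated.

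Once the size alternatives are excluded, \cite[Theorem~4.2]{MSTT} does not bound the correlation of a single component with a nilsequence. Outside the abelian Type II case it only gives a dichotomy: either the correlation is small, or some non-trivial horizontal character $\eta$ satisfies $\|\eta\circ g\|_{C^\infty[X,X+H]}\ll\log^{O(1)}X$.

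The second alternative really occurs for components. Take $G/\Gamma=\R/\Z$, $F(x)=e(x)$ and $g(n)=n/X^2$. Then $F(g(n)\Gamma)=1+O(1/X)$ on $(X,X+H]$. A Type I component such as $(\log\cdot)\1_J$ (the piece with $\alpha=\1_{\{1\}}$ from the first term of Heath-Brown's identity) can have sum $\gg H$ there. Only $f$ as a whole has the needed cancellation, via the major arc estimate \cite[Theorem~3.1(i)]{MSTT}.

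So the non-equidistributed case has to be fed back into the problem for $f$ itself, as follows.
\begin{itemize}
\item Factor $g=\eps g'\gamma$ by \cite[Lemma~2.11]{MSTT}.
\item Split $(X,X+H]$ into $O(\log^{O(1)}X)$ progressions, on each of which one has a nilsequence on a proper subnilmanifold of smaller dimension.
\item Treat its mean by the major arc estimate.
\item Apply the statement inductively to the mean-zero part, letting the exceptional alternative propagate upward with a larger $A_1$.
\end{itemize}
This is why the paper argues by induction on dimension. The zero-mean hypothesis on $F$, which you never use, gives the base case.

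Before decomposing $f$ you also need the central Fourier expansion \cite[Proposition~2.9]{MSTT}. Its error is controlled by $\sum_{X<n\le X+H}|f(n)|\ll H\log^{O(1)}X$, and the zero-frequency part is handled by induction on the quotient by the centre. This reduces matters to $\dim Z(G)=1$ and $F$ of non-zero central frequency.

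\textbf{Gap 2: the ``main obstacle'' is not one.} The reduction above also removes what you call the main obstacle. Once $\dim Z(G)=1$, the abelian Type II alternative forces $G$ to be one-dimensional, so $F(g(n)\Gamma)=b\,e(P(n))$ exactly.

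The assumed failure of \eqref{eq:uniform-nonabelian-bound} then says directly that $f$ correlates with $e(P(n))$, and $h$ does so by pigeonholing. Applying \cite[Theorem~4.2(iii)]{MSTT} to $h$ gives $q\ll\log^{O(1)}X$ and $|T|\le(X/H)^{d+1}\log^{O(1)}X$ with $\|e(P(n))n^{-iT}\|_{\mathrm{TV}((X,X+H]\cap\Z;q)}\ll\log^{O(1)}X$. Then \cite[Lemma~2.2(iii)]{MSTT} transfers \emph{both} correlations to $n^{iT}$ with the same $T$.

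No rerun of the decomposition is needed. An approximation of the nilsequence with error $O(H\log^{-A_1}X)$ is neither available nor required. The factor $e(P(n))n^{-iT}$ is merely of bounded variation on residue classes mod $q$, and summation by parts against that variation does the work.

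(Minor: the Type $\mathrm I_2$ size condition is $X^{1/3}A_{\mathrm I_2}^{2/3}=o(H)$. It holds here because $\tfrac13+\tfrac23\cdot\tfrac25=\tfrac35$.)
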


\begin{proof}
We argue by induction on the dimension of $G/\Gamma$. The zero-dimensional case is immediate, since a function with mean zero on a one-point nilmanifold vanishes.

Choose $C_0=C_0(d,D,A,\eps)$ sufficiently large. On enlarging $C_0$, we may assume that it dominates all logarithmic powers in Proposition~\ref{prop:HB-reduction}. Moreover, we may assume that
\begin{equation}\label{eq:absolute-mass-f}
 \sum_{X<n\leq X+H}|f(n)|\ll H\log^{C_0} X.
\end{equation}
Indeed, for $f=\mu$ this is immediate, while for $f=\Lambda-\Lambda_X^\sharp$ it follows from Shiu's bound and the definition of $\Lambda_X^\sharp$. Let $A_1$ be sufficiently large in terms of $d,D,A,\eps,C_0$ to absorb any logarithmic losses that follow.

Choose $E>A+C_0+10$. By the Fourier decomposition of \cite[Proposition~2.9]{MSTT}, we have
\[
 F=\sum_{\xi\in\Xi}F_\xi+O(\log^{-E} X),\qquad |\Xi|+\max_{\xi\in\Xi}\|F_\xi\|_{\mathrm{Lip}}\ll \log^{O_{d,D,A,E}(1)} X,
\]
where $F_\xi$ oscillates with central frequency $\xi$. For each $\xi\in\Xi$, define
\[
 m_\xi=\int_{G/\Gamma}F_\xi,\qquad F_\xi^0=F_\xi-m_\xi.
\]
Then $F_\xi^0$ has mean zero; if $\xi\ne0$, Haar invariance gives $m_\xi=0$, while the zero-frequency component factors through the quotient by the centre. The error term is controlled by \eqref{eq:absolute-mass-f}, the constants $m_\xi$ by the major arc estimate of \cite[Theorem~3.1(i)]{MSTT}, and every component invariant under a positive-dimensional central subgroup by the induction hypothesis on the corresponding quotient. Consequently, after enlarging $A_1$, we may assume, unless the exceptional alternative has already occurred, that $\dim Z(G)=1$ and that $F$ oscillates with a non-zero central frequency.

By Proposition~\ref{prop:HB-reduction}, the function $f$ can be represented as a linear combination of $O(\log^{O(1)} X)$ functions $h$ of the following forms:
\begin{enumerate}[label=(\roman*)]
\item $h$ is a $(\log^{-A_1} X, X^{3/5+\eps/5})$ type I sum;
\item $h$ is a 	$(\log^{-A_1} X, X^{2/5+\eps/5})$ type I$_2$ sum;
\item $h$ is a $(\log^{-A_1} X, A_{II}^-,A_{II}^+)$ type II sum satisfying \begin{align} \label{eq:typeII-two-sided-range}
X^{2/5}\log^{-O(1)} X \ll A_{II}^{-}\leq A_{II}^{+}\ll X^{3/5+\varepsilon/5}.
\end{align}
\end{enumerate}
For $f=\Lambda-\Lambda_X^\sharp$, the $L^1$ error in the last sentence of Proposition~\ref{prop:HB-reduction} is smaller than every required logarithmic loss. If the claimed discorrelation estimate fails, we have
\begin{equation}\label{eq:uniform-nonabelian-bound2}
 \left|\sum_{X<n\leq X+H}f(n)F(g(n)\Gamma)\right|^*\gg \frac{H}{ \log^{A+1} X}.
 \end{equation}
Then the triangle inequality and the number of components give one function $h$ of one of the above types for which
\begin{align}\label{eq:component_correlation}
\left| \sum_{X<n \leq X+H} h(n) F(g(n)\Gamma)\right|^* \gg \frac{H}{\log^{A+O(1)} X}.
\end{align}
We apply \cite[Theorem~4.2]{MSTT} to this component. In the Type I and Type $I_2$ cases, and in the non-abelian Type II case, it gives one of the following alternatives:
\begin{enumerate}
\item [\textbf{Case 1:}] a non-trivial horizontal character $\eta:G\to \R/\Z$ of Lipschitz norm $O_{d,D}(\log^{O_{d,D,A,A_1}(1)} X)$ satisfies
\begin{align} \label{eq:horiz-charac}
\|\eta \circ g\|_{C^\infty [X,X+H]} \ll_{d,D} \log^{O_{A_1}(1)} X;	
\end{align}
\item[\textbf{Case 2:}] the component $h$ is Type II and the nilmanifold is abelian.
\end{enumerate}
The size alternative in \cite[Theorem~4.2(i),(ii),(iv)]{MSTT} cannot occur. Indeed, the corresponding quantities in the three cases satisfy
\[
 X^{3/5+\eps/5}=o(H),\qquad X^{1/3}(X^{2/5+\eps/5})^{2/3}=X^{3/5+2\eps/15}=o(H),
\]
and, by \eqref{eq:typeII-two-sided-range},
\[
 \max\{A_{II}^+,X/A_{II}^-\}\ll X^{3/5+\eps/5}\log^{O(1)} X=o(H).
\]

In Case~2, since $\dim Z(G)=1$, the group $G$ is one-dimensional. The single-frequency property therefore gives
\[
F(g(n)\Gamma)=b\,e(P(n))
\]
for some $b\in\mathbb C$ and some polynomial $P:\mathbb Z\to\mathbb R$ of degree at most $d$. The original failed correlation~\eqref{eq:uniform-nonabelian-bound2} and the component correlation~\eqref{eq:component_correlation} give now
\begin{align}
\left|\sum_{X<n\leq X+H}f(n)e(P(n))\right|^*&\gg_{d,D,A,\eps}H\log^{-A_1} X,
                                              \label{eq:original-poly-correlation}\\
  \left|\sum_{X<n\leq X+H}h(n)e(P(n))\right|^*&\gg_{d,D,A,\eps}H\log^{-A_1} X.
 \label{eq:component-poly-correlation}
\end{align}
After normalising by $b$, \cite[Theorem~4.2(iii)]{MSTT} gives an integer $q\ll \log^{O_{A_1}(1)} X$ and a number $ |T| \leq (X/H)^{d+1} \log^{O_{A_1}(1)} X$ such that
\begin{align}
    \|e(P(n))n^{-iT}\|_{\mathrm{TV}((X,X+H]\cap\mathbb Z;q)} &\ll_{d,D,A,\eps} \log^{O(A_1)} X,
 \label{eq:poly-twist}
\end{align}
By \cite[Lemma~2.2(iii)]{MSTT}, combining~\eqref{eq:original-poly-correlation} and \eqref{eq:component-poly-correlation} with \eqref{eq:poly-twist} and enlarging $A_1$ give~\eqref{eq:original-mult-correlation} and  \eqref{eq:component-multiplicative-correlation}. This is the exceptional alternative in the statement.

Therefore, it remains to consider the case in which \eqref{eq:horiz-charac} holds. As in \cite[Section~4.2]{MSTT}, we apply \cite[Lemma~2.11]{MSTT} to factor $g\in\Poly(\Z\to G)$ as $g=\varepsilon g'\gamma$, where $\eps$ is smooth, $\gamma$ is rational and $g'\in\Poly(\Z,\ker\eta)$. Freezing $\eps$ and $\gamma$ incurs an error of $O(H \log^{-A-1} X)$ and splits $(X,X+H]$ into $O(\log^{O_{A_1}(1)} X)$ arithmetic progressions $P_j$, as in \cite[Section~4.2]{MSTT}. On each $P_j$, we are therefore reduced to a nilsequence $F_j(g_j(n)\Gamma_j)$ on a proper rational subnilmanifold $G_j/\Gamma_j$ satisfying $\dim G_j<\dim G$.

With the constants in this factorisation fixed, choose $B'$ sufficiently large in terms of $A_1,A,d,D,\eps$ to absorb the number of progressions and all other logarithmic losses. If the induction hypothesis, applied to the mean-zero part of some $F_j$ on the original interval $(X,X+H]$, yields its exceptional alternative, then it gives \eqref{eq:original-mult-correlation} and \eqref{eq:component-multiplicative-correlation}, together with the required bound for $T$, with a possibly larger logarithmic exponent. We enlarge the final value of $A_1$ to that exponent. Otherwise, after subtracting the mean of $F_j$, the major arc estimate and the induction hypothesis, both applied with saving exponent $B'$, give
\[
\Big|\sum_{n\in P_j}f(n)F_j(g_j(n)\Gamma_j)\Big|^{*}\ll \frac{H}{\log^{B'} X}.
\]
Intersecting an arbitrary arithmetic progression with the $P_j$ and summing the resulting estimates gives
\[
\Big|\sum_{X<n\leq X+H}f(n)F(g(n)\Gamma)\Big|^{*}\ll \frac{H}{\log^A X},
\]
as claimed. This completes the induction and hence the proof.
\end{proof}

\begin{proof}[Proof of Theorem~\ref{thm:discorrelation}]
Let $f\in\{\mu,\Lambda-\Lambda_X^\sharp\}$. Subtracting the mean of
$F$, the constant part is bounded by \cite[Theorem~3.1(i)]{MSTT};
thus we may suppose that $F$ has mean zero. The Lipschitz norm of the new $F$ is at most $2/\delta$. Fix a sufficiently large
$M=M(A,d,D)$, and write
\[
 A'=\max\{A,M+1\}.
\]
If $\delta<\log^{-M} X$, the trivial estimate
\[
 \left|\sum_{X<n\leq X+H}
 f(n)\overline{F(g(n)\Gamma)}\right|^*
 \ll\delta^{-1}H\log X
\]
is at most $\delta^{-O_{d,D}(1)} H \log^{-A} X$ once the exponent
implicit in $O_{d,D}(1)$ is chosen sufficiently large. If
$\delta\geq\log^{-M} X$, the complexity of $G/\Gamma$ is at most
$\log^M X$. We apply Proposition~\ref{prop:non-abelian}
to $\overline F$ with $A'$ in place of $A$. We reach
\[
 \left|\sum_{X<n\leq X+H}
 f(n)\overline{F(g(n)\Gamma)}\right|^*
 \ll_{d, D, A, \varepsilon} \frac{H}{\log^{A'} X}\ll \frac{H}{\log^A X},
\]
unless the alternative of that proposition holds.

It remains to rule out the alternative of
Proposition~\ref{prop:non-abelian}. Let $h,T$ and $A_1$ be as
there. We apply Proposition~\ref{prop:typeII-major} with $A_1+10$ and $d+2$ in place of $A$ and $B$
and let $C_2=C_2(A_1+10,d+2,\varepsilon)$ be the constant denoted $C_A$ there.
If
\[
 \frac{X}{H} \log^{C_2} X\leq |T|
 \leq \left(\frac{X}{H}\right)^{d+1}\log^{A_1} X,
\]
then $|T|\leq X^{d+2}$ for all sufficiently large $X$, and
Proposition~\ref{prop:typeII-major} gives
\[
 \left|\sum_{X<n\leq X+H}h(n)n^{iT}\right|^*
 \ll \frac{H}{\log^{A_1+10} X},
\]
contradicting \eqref{eq:component-multiplicative-correlation}.

Suppose instead that
\[
 |T|<\frac{X}{H} \log^{C_2} X.
\]
Then
\begin{equation}\label{eq:variation-nit}
 \|n^{iT}\|_{\mathrm{TV}((X,X+H]\cap\mathbb Z)}
 \ll 1+  \sum_{\substack{X<n \leq X+H}}
 |(n+1)^{iT}-n^{iT}|\ll 1+ |T| \frac{H}{X}
 \ll\log^{C_2} X.
\end{equation}
Combining this with~\eqref{eq:original-mult-correlation} and
\cite[Lemma~2.2(iii)]{MSTT}, we obtain
\[
 \left|\sum_{X<n\leq X+H}f(n)\right|^*
 \gg \frac{H}{\log^{A_1+C_2} X}.
\]
The major arc estimate of \cite[Theorem~3.1(i)]{MSTT}, applied with
saving exponent $A_1+C_2+1$, contradicts this. Hence the alternative
in Proposition~\ref{prop:non-abelian} cannot occur, and the theorem
follows.
\end{proof}
\section{Sums of the M\"obius function over short intervals}\label{sec:mobius-short}
The aim of this section is to prove Theorem~\ref{thm:intro-mobius}. Let us first give the relevant combinatorial decomposition lemma.

\begin{lemma}[Type I, Type II, Type I/II trichotomy]\label{lem:combHBI}
Let $\tau \in (0, 1/2)$, let $0\leq\eta<1-2\tau$, and let $k\geq 1$. Let $\alpha_1,\ldots,\alpha_k\geq0$, write $S=\sum_{j\leq k}\alpha_j$, and suppose that $|S-1|\leq\eta$. For $J\subset\{1,\ldots,k\}$ write $\alpha_J=\sum_{j\in J}\alpha_j$. Then at least one of the following alternatives holds.
\begin{enumerate}[label=(\roman*)]
\item There is an index $j_0 \in \{1, \dotsc, k\}$ such that $\alpha_{j_0} > \tau$.
\item There is a partition $\{1, \dotsc, k\} = J_1 \sqcup J_2$ with $\alpha_{J_1}, \alpha_{J_2} \in [\tau, S-\tau].$
\item There is an index $j_0 \in \{1, \dotsc, k\}$ and a partition $\{1, \dotsc, k\} =\{j_0\} \sqcup J_1\sqcup J_2$ such that $S-2\tau<\alpha_{j_0}\leq\tau$ and, for $r \in \{1, 2\}$, we have
\[
 S-2\tau<\alpha_{J_r}< \tau.
\]
\end{enumerate}
\end{lemma}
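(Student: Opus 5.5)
We argue by contradiction: assume (i) and (ii) both fail, and produce the configuration in (iii). Since (i) fails, every $\alpha_j\leq\tau$.

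\textbf{Step 1: no subset sum lies in $[\tau,S-\tau]$.} For any $J$, the partition $J\sqcup J^c$ would satisfy (ii) whenever $\alpha_J\in[\tau,S-\tau]$. Note that $S-\tau>\tau$, since $S\geq 1-\eta>2\tau$. So every subset sum is either $<\tau$ or $>S-\tau$.

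\textbf{Step 2: greedy accumulation.} Order the indices arbitrarily and add the $\alpha_j$ one at a time.
\begin{itemize}
\item Each increment is at most $\tau$.
\item The partial sums go from $0$ to $S$ and can never land in $[\tau,S-\tau]$.
\item Let $m$ be the first index at which the partial sum exceeds $S-\tau$.
\end{itemize}
Then the partial sum $\sigma$ just before $m$ satisfies $\sigma<\tau$ and $\sigma+\alpha_m>S-\tau$. Hence $\alpha_m>S-2\tau$. Set $j_0=m$; it satisfies $S-2\tau<\alpha_{j_0}\leq\tau$, which is the first condition in (iii).

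\textbf{Step 3: split the rest into $J_1$ and $J_2$.} Let $R=\{1,\dots,k\}\setminus\{j_0\}$.
\begin{itemize}
\item $\alpha_R=S-\alpha_{j_0}\geq S-\tau$. Strict inequality $\alpha_R>S-\tau$ follows from Step 1, since $\alpha_R\ge S-\tau>\tau$ rules out $\alpha_R<\tau$.
\item Greedily add elements of $R$. By Step 1 the running sum must jump from below $\tau$ to above $S-\tau$ at some step.
\end{itemize}
The cleanest choice is to take $J_1\subseteq R$ maximal (with respect to inclusion) among subsets with $\alpha_{J_1}<\tau$, and set $J_2=R\setminus J_1$.

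We now bound $\alpha_{J_2}$. We have $\alpha_{J_2}=S-\alpha_{j_0}-\alpha_{J_1}$. The subset $J_2\cup\{j_0\}$ is the complement of $J_1$, so its sum is $S-\alpha_{J_1}>S-\tau$. That alone is not enough to bound $\alpha_{J_2}$ from above, so we argue via the complement of $J_2$.
\begin{itemize}
\item Write $\alpha_{J_2}=S-\alpha_{J_1\cup\{j_0\}}$.
\item Maximality of $J_1$ means adding any $j\in J_2$ pushes the sum above $S-\tau$. Adding $j_0$ is not covered by this, so $\alpha_{J_1\cup\{j_0\}}$ needs a separate argument.
\item Instead, apply Step 1 symmetrically. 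Since $\alpha_{J_2}\leq\alpha_R$, either $\alpha_{J_2}<\tau$ or $\alpha_{J_2}>S-\tau$.
\item If $\alpha_{J_2}>S-\tau$, then $\alpha_{J_1\cup\{j_0\}}<\tau$, while $\alpha_{j_0}>S-2\tau$.
\end{itemize}
To make the argument robust, the right choice is to pick $J_1$ so that $\alpha_{J_1\cup\{j_0\}}$ is controlled. Redo the greedy procedure in Step 2 with $j_0$ placed first.
\begin{itemize}
\item Add elements of $R$ to $\{j_0\}$ one at a time. The sum starts at $\alpha_{j_0}<\tau$ and ends at $S$, so it jumps over $[\tau,S-\tau]$ at some element $j_1$.
\item Let $J_1$ be the set of elements added before $j_1$. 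Then $\alpha_{\{j_0\}\cup J_1}<\tau$, hence $\alpha_{J_1}<\tau$.
\item Let $J_2=R\setminus J_1$, which contains $j_1$. Then $\alpha_{J_2}=S-\alpha_{\{j_0\}\cup J_1}>S-\tau$.
\end{itemize}
This shows the single-greedy approach does not directly give (iii) with $j_0$ fixed first. The intended construction is therefore a double greedy.
\begin{itemize}
\item Take $J_1$ to be a maximal set with sum $<\tau$ not containing $j_0$.
\item Its complement has sum $>S-\tau$.
\item Then $j_0$ is taken to be the element whose addition causes the jump, which gives $\alpha_{j_0}>S-2\tau$.
\item With $J_2$ the remainder: $J_1\cup\{j_0\}$ has sum $>S-\tau$ by maximality, so $\alpha_{J_2}<\tau$.
\item Symmetrically, $J_2\cup\{j_0\}$ has sum $>S-\tau$, since its complement $J_1$ has sum $<\tau$.
\item Hence $\alpha_{J_1}=S-\alpha_{J_2\cup\{j_0\}}$, so $\alpha_{J_1}$ and $\alpha_{J_2}$ are both $<\tau$.
\item For the lower bounds: $\alpha_{J_2}=S-\alpha_{J_1}-\alpha_{j_0}>S-2\tau$ (using $\alpha_{j_0}\le\tau$ and $\alpha_{J_1}<\tau$), and $\alpha_{J_1}>S-2\tau$ because $\alpha_{J_1\cup\{j_0\}}>S-\tau$ and $\alpha_{j_0}\le\tau$.
\end{itemize}

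\textbf{Main obstacle.} The only delicate point is choosing $j_0$ as an element of the complement of a maximal small set $J_1$. This choice is what makes the maximality give $\alpha_{J_1\cup\{j_0\}}>S-\tau$, and it requires $J_1^c$ to contain at least two elements. If $J_1^c=\{j_0\}$, then $\alpha_{j_0}>S-\tau>\tau$, contradicting the failure of (i). The hypothesis $\eta<1-2\tau$ is used only to ensure $S>2\tau$, so that $[\tau,S-\tau]$ is a genuine interval.
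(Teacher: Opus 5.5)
Your final argument is correct, and it takes a slightly different and somewhat leaner route than the paper.

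\textbf{Comparison with the paper.} The paper chooses a maximal \emph{pair} of disjoint sets $J_1,J_2$ with $\alpha_{J_1},\alpha_{J_2}<\tau$. It then shows the leftover set is a singleton: two leftover indices $r_1,r_2$ would force $\alpha_{J_1\cup\{r_1\}},\alpha_{J_2\cup\{r_2\}}>S-\tau$, and these two sums together exceed $S$.

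You instead choose a single inclusion-maximal set $J_1$ with $\alpha_{J_1}<\tau$ and let $j_0$ be \emph{any} element of $J_1^c$. Maximality together with the dichotomy of Step~1 gives $\alpha_{J_1\cup\{j_0\}}>S-\tau$. Hence $J_2=J_1^c\setminus\{j_0\}$ satisfies $\alpha_{J_2}<\tau$ by complementation. The lower bounds $>S-2\tau$ then follow by subtraction, exactly as in the paper.

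Both proofs rest on the same fact: failure of (ii), together with $S>2\tau$, means every subset sum is $<\tau$ or $>S-\tau$. Your version avoids the counting contradiction and shows that every index outside $J_1$ can serve as $j_0$. The paper's version treats $J_1$ and $J_2$ symmetrically from the outset.

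\textbf{Write-up.} Step~2 and the intermediate attempts are not needed and should be removed. Also, $J_2\neq\emptyset$ is automatic, since $\alpha_{J_2}>S-2\tau>0$.

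Make the order of choices unambiguous. The phrase ``a maximal set with sum $<\tau$ not containing $j_0$'' reads as if $j_0$ is fixed first (say from Step~2) and $J_1$ is maximal only inside $R$. Under that reading the key step ``$\alpha_{J_1\cup\{j_0\}}\geq\tau$ by maximality'' fails. For example, take $\tau=0.49$, $S=1$, and $(\alpha_1,\dots,\alpha_5)=(0.1,0.1,0.2,0.25,0.35)$.
\begin{enumerate}[label=(\alph*)]
\item No subset sum lies in $[0.49,0.51]$ and no element exceeds $\tau$, so (i) and (ii) fail.
\item The greedy order $1,5,2,\dots$ has partial sums $0.1,\,0.45,\,0.55$, so $j_0=2$ and $R=\{1,3,4,5\}$.
\item $J_1=\{1,4\}$ is maximal inside $R$, since adding $3$ or $5$ gives $0.55$ or $0.7$.
\item Yet $\alpha_{J_1\cup\{2\}}=0.45<\tau$, and $J_2=\{3,5\}$ has $\alpha_{J_2}=0.55>\tau$.
\end{enumerate}
Your ``Main obstacle'' paragraph already has the right order: take $J_1$ maximal among \emph{all} subsets with sum $<\tau$, and only then pick $j_0\in J_1^c$. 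State it that way and the proof is complete.
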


\begin{proof}
  Assume that \emph{(i)} and \emph{(ii)} fail. After discarding zero terms, choose maximal disjoint sets $J_1,J_2\subseteq\{1,\dotsc,k\}$ such that $\alpha_{J_1},\alpha_{J_2}<\tau$. Write $J_0=\{1,\dotsc,k\}\setminus(J_1\cup J_2)$.

  Let us first show that $|J_0|=1$. Since $S\geq 2\tau$, we have $J_0\neq\emptyset$. Assume for contradiction that there exist distinct $r_1,r_2\in J_0$. By maximality,
$$
\alpha_{J_1 \cup \{r_1\}}, \alpha_{J_2 \cup \{r_2\}} \geq \tau.
$$
Since we are not in case \emph{(ii)}, neither sum lies in $[\tau,S-\tau]$, so both exceed $S-\tau$. But then
\[
S \geq \alpha_{J_1 \cup \{r_1\}} + \alpha_{J_2 \cup \{r_2\}} > 2(S-\tau) \geq S,
\]
which is a contradiction. Hence $|J_0|=1$, so $J_0=\{j_0\}$ for some $j_0\in \{1,\ldots, k\}$.

Since case \emph{(i)} fails, $\alpha_{j_0}\leq\tau$. Moreover, neither $J_1$ nor $J_2$ is empty, since otherwise $S<2\tau$. We have
\[
 \alpha_{j_0}=S-\alpha_{J_1}-\alpha_{J_2}>S-2\tau,
\]
and, for $\{r,s\}=\{1,2\}$,
\[
 \alpha_{J_r}=S-\alpha_{j_0}-\alpha_{J_s}>S-2\tau.
\]
Thus if $(i)$ and $(ii)$ fail, we are indeed in case \emph{(iii)}.
\end{proof}

We prove Theorem~\ref{thm:intro-mobius} following the overall strategy of Matom\"aki--Ter\"av\"ainen~\cite{MatomakiTeravainenMobius}. We spell out the reduction from the short interval M\"obius sum to a comparison of Dirichlet polynomial components, since this is where the Heath-Brown\,--\,Iwaniec type estimate is used.

\subsection{Ramar\'e extraction and comparison} \label{ssec:Ramare}

Choose parameters
\begin{equation}\label{eq:ramare-PQ}
  P\coloneqq \exp(\log^{2/3+\eta_0} X),\qquad Q\coloneqq X^{1/(\log\log X)^2},
\end{equation}
where $\eta_0>0$ is chosen sufficiently small in terms of $\eps$, and satisfies $\eta_0<\min\{1/40, \eps/2\}$. Employing a Ramar\'e-type identity \cite[Lemma~3.1]{MatomakiTeravainenMobius} with $2\eta_0$ in place of $\varepsilon$, we obtain, uniformly for $X^{2\eta_0} \leq H\leq X$,
\begin{equation}\label{eq:ramare-identity-section2}
  \sum_{X<n\leq X+H}\mu(n) =- \sum_{\substack{X<prn\leq X+H\\ P<p\leq Q\\ r\leq X^{\eta_0}}} a_r\mu(n) +O\left(H\frac{\log P}{\log Q}\right),
\end{equation}
with coefficients satisfying $|a_r|\leq d(r)$. With the choice \eqref{eq:ramare-PQ}, the error term in \eqref{eq:ramare-identity-section2} is $\ll H(\log X)^{-1/3+2\eta_0+o(1)}$, and is therefore $O(H \log^{-1/3+\eps} X)$ by the choice of $\eta_0$.

We next decompose the main term in
\eqref{eq:ramare-identity-section2}. Apply Heath-Brown's identity \cite[(13.38)]{IwaniecKowalski} to $\mu(n)$ with parameter $K=20$, and then decompose all variables dyadically. The identity has $2j-1$ variables in its $j$th summand, and after adjoining dummy variables equal to $1$, this expresses the main term as a linear combination of $O(\log^{2K+2} X)$ sums of the following form:
\begin{equation}\label{eq:dyadic-ramare-component}
  S_{\mathcal{I}} \coloneqq \sum_{\substack{pr n_1\cdots n_{2K-1} \in \mathcal{I} \\ p \in (P_1, 2P_1], \, r \in (R, 2R], n_j \in (N_j, 2N_j] \\ p \leq Q}} a_r\alpha_1(n_1)\cdots\alpha_{2K-1}(n_{2K-1}),
\end{equation}
where $P_1 \in [P, Q], \, R \in [1/2, X^{\eta_0}], \, N_j \geq 1/2$, and
\begin{equation}\label{eq:product-dyadic-lengths}
  P_1R N_1\cdots N_{2K-1}\asymp X.
\end{equation}
For $1\leq j\leq K-1$, the coefficient $\alpha_j(n)$ is an interval indicator (or a dummy factor supported at $1$, with $N_j = 1/2$); for $K\leq j\leq2K-1$, it is the M\"obius function restricted to an interval with $N_j\leq(2X)^{1/K}$ (or a dummy factor supported at $1$, with $N_j = 1/2$). In particular, every factor of length greater than $(2 X)^{1/K}$ is a $1$-weighted factor.

Finally, define 
\begin{equation}\label{eq:Y-long-interval}
Y\coloneqq
X\exp\bigl(-3 \log^{1/3} X\bigr).
\end{equation}
For each dyadic component, we compare the summation over
$(X,X+H]$ with the corresponding summation over
$(X,X+Y]$. The required comparison is provided by the following
proposition.

\begin{proposition}\label{prop:ramare-component-comparison}
Let $\eps>0$, $B_0>0$, and $X^{19/35+\eps}\leq H\leq Y$. Let $S_{\mathcal I}$ be any dyadic component of the form \eqref{eq:dyadic-ramare-component}. Then
\begin{equation}\label{eq:ramare-component-comparison}
  S_{(X,X+H]}=\frac{H}{Y}S_{(X,X+Y]}+O_{\eps,B_0}\left(\frac{H}{\log^{B_0} X}\right).
\end{equation}
The estimate is uniform in the dyadic parameters satisfying \eqref{eq:product-dyadic-lengths}.
\end{proposition}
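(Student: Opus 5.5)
We follow the Perron formula strategy of \cite{MatomakiTeravainenMobius}. Writing the dyadic component as a Dirichlet polynomial $F(s)=P_1(s)R(s)\prod_j N_j(s)$, where $P_1(s)=\sum_{p\in(P_1,2P_1],p\leq Q}p^{-s}$, $R(s)=\sum_{r\sim R}a_r r^{-s}$ and $N_j(s)=\sum_{n\sim N_j}\alpha_j(n)n^{-s}$, the truncated Perron formula gives
\[
S_{(X,X+H]}-\frac HY S_{(X,X+Y]}=\frac1{2\pi i}\int_{-T_1}^{T_1}F(1/2+it)\Big(\frac{(X+H)^{1/2+it}-X^{1/2+it}}{1/2+it}-\frac HY\cdot\frac{(X+Y)^{1/2+it}-X^{1/2+it}}{1/2+it}\Big)\dd t+\text{error}.
\]
Here $T_1=(X/H)\log^{C}X$, and the error is controlled by Shiu's bound as in the proof of Proposition~\ref{prop:typeII-major}. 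For $|t|\leq T_0:=\exp(\log^{1/4}X)$ (or a slightly larger threshold below $X/Y$) the kernel difference is $\ll (H/X^{1/2})(T_0Y/X+\cdots)$, which is small since $Y/X=\exp(-3\log^{1/3}X)$. So the task is to show
\[
\int_{T_0}^{T_1}|F(1/2+it)|\dd t\ll X^{1/2}\log^{-B_0-1}X,
\]
using the bound $\ll H/X^{1/2}$ for the kernel. Note that $T_1\leq X^{16/35-\eps/2}$.

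In the range $T_0\leq |t|\leq T_1$ we use the prime polynomial $P_1(s)$. By a Vinogradov--Korobov zero-free region, $\sup_{T_0\leq|t|\leq X}|P_1(1/2+it)|\ll P_1^{1/2}\exp(-c\log^{\eta_0/2} X)$, since $P_1\geq P=\exp(\log^{2/3+\eta_0}X)$. This pointwise saving (rather than just a log power) is what beats the $O(\log^{O(1)})$ losses. Group $M(s):=R(s)\prod_jN_j(s)$, of length $X/P_1=X^{1-o(1)}$, with exponents $\alpha_j$ for the variables $r,n_j$ summing to $S\in[1-\eta,1]$, where $\eta=o(1)$ since $P_1\leq Q$. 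Apply Lemma~\ref{lem:combHBI} with $\tau=\tau_1=16/35-\eps/4$.

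\textbf{Case (i).} Some smooth factor $n_{j_0}$ has length $>X^{\tau}\geq T_1^{1+c}$, necessarily $1$-weighted since it exceeds $(2X)^{1/K}$. Then $\sup|N_{j_0}(1/2+it)|\ll N_{j_0}^{1/2}/T$ on $[T,2T]$. We combine this with the mean value theorem for the rest of the product, with $P_1$ absorbed into it, via Cauchy--Schwarz, giving a saving $X^{-c}$.

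\textbf{Case (ii).} Split $M=M_1M_2$ with both lengths in $[X^{\tau},X^{S-\tau}]$. Using the pointwise saving of $P_1$ and Cauchy--Schwarz with the mean value theorem, we get $\sup|P_1|\cdot\big((T_1+M_1)(T_1+M_2)\big)^{1/2}X^{-1/2}P_1^{1/2}$, which is acceptable since $M_i\geq X^{\tau}\gg T_1$.

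\textbf{Case (iii).} We get $j_0$ and $J_1,J_2$ with all three exponents in $(S-2\tau,\tau]\subseteq[1/13,\tau_1]$. Since $S-2\tau>1-32/35>\eps$, they fit the hypotheses of Lemma~\ref{lem:hbi-trilinear-estimates} after absorbing the tiny $P_1$ into one grouped factor, and renormalising $X$ to the product length; the piecewise-linear inputs tolerate $o(1)$ perturbations of the $\beta_j$. The factor $n_{j_0}$ must be an interval indicator, which is the hypothesis on $A_1$. It is $1$-weighted provided its length exceeds $(2X)^{1/K}=X^{1/20}$, which holds since $\alpha_{j_0}>S-2\tau\approx 3/35>1/20$. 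The factor $r$ cannot be $j_0$: its exponent is at most $\eta_0<S-2\tau$. Lemma~\ref{lem:hbi-trilinear-estimates} then gives a saving of an arbitrary log power over $[T_0,T_1]$.

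The main obstacle is the bookkeeping in case (iii): ensuring that $j_0$ is a genuine smooth interval factor (not $r$, $p$, or a M\"obius factor), and that absorbing $P_1$ and $R$ into $J_1,J_2$ keeps the exponents in $[\eps,\tau_1]$ uniformly after renormalisation. Choosing $\tau$ slightly below $16/35$ and $K=20$ handles this.
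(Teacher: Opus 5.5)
Your proposal is correct. The Perron reduction to $\int_{T_0}^{T_1}|F(1/2+it)|\,\dd t$ matches the paper, and so does the application of Lemma~\ref{lem:combHBI} with $\tau=16/35-\eps/4$. Cases (ii) and (iii) are also as in the paper: a zero-free-region bound for the prime polynomial plus Cauchy--Schwarz, then Lemma~\ref{lem:hbi-trilinear-estimates} after absorbing $P_1$, using $S-2\tau>1/K$ to force $j_0$ to be a $1$-weighted interval factor.

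The one genuine difference is case (i).

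\begin{itemize}
\item The paper never passes to Dirichlet polynomials there. It interchanges summation and observes that, for each $m$, the count of $n_{j_0}\in(X/m,(X+H)/m]$ minus $H/Y$ times the count in $(X/m,(X+Y)/m]$ is $O(1)$. The exception is $O(Y/N_{j_0}+1)$ values of $m$, where it is $O(HN_{j_0}/X)$. Shiu's bound then gives $(X/N_{j_0}+HY/X+HN_{j_0}/X)\log^{O(1)}X$.
\item You stay inside the Perron framework. You use the first-derivative bound $|N_{j_0}(1/2+it)|\ll N_{j_0}^{1/2}/|t|$, valid since $|t|\leq T_1\leq N_{j_0}X^{-c}$, together with Cauchy--Schwarz and the mean value theorem.
\item Both routes rest on the same condition $N_{j_0}>X^{\tau}\geq X^{c}X/H$. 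The paper's is more elementary and needs no analytic input; yours handles all three cases uniformly.
\end{itemize}

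Your case (i) bound on $[T,2T]$ is really $(N_{j_0}^{1/2}+X^{1/2}T^{-1/2})\log^{O(1)}X$. Since $N_{j_0}\ll X/P_1$, the saving is about $\min\{T_0,P_1\}^{1/2}$, not $X^{-c}$. That is still superpolylogarithmic, so it suffices.

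Two minor simplifications:
\begin{itemize}
\item In case (iii) there is no need to perturb the piecewise-linear computation. Just apply Lemma~\ref{lem:hbi-trilinear-estimates} with $\eps/8$ in place of $\eps$, after a dyadic subdivision of the two grouped products, as the paper does.
\item Choose $T_0$ slightly larger, for instance $\exp(\log^{1/3}X)$ as in the paper, so that $T_0\geq\exp(\log^{1/4}\widetilde X)$ holds automatically after renormalising.
\end{itemize}
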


\begin{proof}[Proof of Theorem~\ref{thm:intro-mobius} assuming Proposition~\ref{prop:ramare-component-comparison}]

If $H>Y$, the Vinogradov--Korobov estimate for the summatory M\"obius function gives
\[
 \left|\sum_{X<n\leq X+H}\mu(n)\right|\ll X\exp\bigl(-c\log^{3/5} X(\log\log X)^{-1/5}\bigr)
 \ll \frac{H}{\log^{100} X},
\]
so there is nothing to prove. We may therefore assume that $H\leq Y$.

Choose $B_0 = 100$ in Proposition~\ref{prop:ramare-component-comparison} to compensate for the $O(\log^{2K+2} X)$ dyadic components. Summing \eqref{eq:ramare-component-comparison} and recombining Heath-Brown's identity and~\eqref{eq:ramare-identity-section2} gives
\begin{equation}\label{eq:mobius-recombined-long}
  \sum_{X<n\leq X+H}\mu(n)
  =\frac{H}{Y}\sum_{X<n\leq X+Y}\mu(n)+O_{\eps}\left(\frac{H}{\log^{1/3-\eps} X}\right).
\end{equation}
For some $c>0$, the main term on the right-hand side is $\ll H\exp(-c\log^{3/5} X(\log\log X)^{-1/5})$ by the classical zero-free region for $\zeta(s)$, so Theorem~\ref{thm:intro-mobius} follows.
\end{proof}

\subsection{The Dirichlet polynomial estimate}

We conclude this section with the proof of Proposition~\ref{prop:ramare-component-comparison}.

\begin{proof}[Proof of Proposition~\ref{prop:ramare-component-comparison}]
Fix $A>B_0$ sufficiently large in terms of $K = 20$ to absorb the fixed logarithmic losses in this proof.

For $1\leq j\leq 2K-1$, write
\[
\beta_j = \max\left\{\frac{\log N_j}{\log X}, 0\right\},\qquad \beta_{2K} = \max\left\{\frac{\log R}{\log X}, 0\right\},\qquad
 \rho=\frac{\log P_1}{\log X},
\]
and set $S=\sum_{j=1}^{2K}\beta_j$. By \eqref{eq:product-dyadic-lengths},
\[
 S=1-\rho+O(1/\log X),\qquad 0\leq\rho\leq\frac{1}{(\log\log X)^2}+O(1/\log X).
\]
Thus, for $X$ sufficiently large, Lemma~\ref{lem:combHBI} applies to $\beta_1,\dotsc,\beta_{2K}$ with
\[
 \tau=\frac{16}{35}-\frac{\eps}{4},\qquad
 \eta=\frac{2}{(\log\log X)^2}.
\]

We split into cases.

\medskip\noindent\textbf{Case 1: Lemma~\ref{lem:combHBI}(i) holds.}
Now there is an index $j_0$ with $\beta_{j_0}>\tau.$ Since $\tau > 2/K$, $\alpha_{j_0}$ is an interval indicator. Write its supporting interval as $I_{j_0}=(u,v]$. Putting the sum over $n_{j_0}$ inside, in this case we obtain, for some $|b_m| \leq d_{100}(m)$,
\[
 S_{(X,X+H]} - \frac{H}{Y}S_{(X,X+Y]} = \sum_{m \leq 2X/N_{j_0}} b_m \left(\sum_{\substack{X/m < n_{j_0} \leq (X+H)/m \\ n_{j_0} \in (u, v]}} 1 - \frac{H}{Y} \sum_{\substack{X/m < n_{j_0} \leq (X+Y)/m \\ n_{j_0} \in (u, v]}} 1\right).
  \]
Now the difference in the parentheses is $O(1)$ unless
\[
 X<mu\leq X+Y\qquad\text{or}\qquad X<mv\leq X+Y.
\]
There are $O(Y/N_{j_0}+1)$ integers $m$ for which one of these holds, and for each of them the difference in the parentheses is $O(1+HN_{j_0}/X) = O(HN_{j_0}/X)$. Shiu's bound for the divisor bounded coefficients $b_m$ thus gives
\[
S_{(X,X+H]} - \frac{H}{Y}S_{(X,X+Y]} \ll \left(\frac{X}{N_{j_0}} + \frac{HY}{X} + \frac{HN_{j_0}}{X} \right) \log^{O(1)}X.
\]
Since $N_{j_0}\in (X^\tau, 2X/P]$, $H\geq X^{19/35+\eps}$, and $Y/X=\exp(-3\log^{1/3} X)$, this is $\ll_{A,\eps} H/\log^A X$ as claimed.

\medskip\noindent\textbf{Case 2: Lemma~\ref{lem:combHBI}(ii) or (iii) holds.}
By a standard reduction using Perron's formula (see e.g.~\cite[Lemma 7.2]{HarmanBook}), it is enough to prove that, for every $A \geq 1$,
\begin{equation}\label{eq:perron-target-section2}
  \int_{T_0}^{T_1}\left|P_1(1/2+it)N_{2K}(1/2+it)\prod_{j=1}^{2K-1} N_j(1/2+it)\right|\dd t\ll_A \frac{X^{1/2}}{\log^A X},
\end{equation}
where
\[
 T_0=\exp(\log^{1/3} X),\qquad T_1=\frac{X^{1+\eps/2}}{H} \leq X^{16/35-\eps/2},
\]
and
\[
 P_1(s)=\sum_{P_1<p\leq \min\{2P_1,Q\}}p^{-s},\quad N_{2K}(s)=\sum_{R<r\leq2R}a_r r^{-s},\quad N_j(s)=\sum_{N_j<n\leq2N_j}\alpha_j(n)n^{-s}
\]
for $1\leq j\leq 2K-1$.

\medskip\noindent\textbf{Case 2.1: Lemma~\ref{lem:combHBI}(ii) holds.}
Let $J_1\sqcup J_2=\{1,\dotsc,2K\}$ be the partition in the lemma and set, for $\ell \in \{1, 2\}$,
\[
A_\ell(s)=\prod_{j\in J_\ell}N_j(s),
\]
and
\begin{equation*}
 \beta_{J_\ell}\coloneqq\sum_{j\in J_\ell}\beta_j,
\end{equation*}
so that $A_\ell(s)$ has length $X^{\beta_{J_\ell}}$.

Alternative \emph{(ii)} of Lemma~\ref{lem:combHBI} gives
\begin{equation}\label{eq:case2-product-lengths}
 \tau\leq\beta_{J_1},\beta_{J_2}\leq S-\tau.
\end{equation}
Since $\tau=\frac{16}{35}-\frac{\eps}{4}$,
\begin{equation}\label{eq:case2-lengths-exceed-T}
T_1\leq X^{16/35-\eps/2}=X^{\tau-\eps/4} \leq \min\{X^{\beta_{J_1}}, X^{\beta_{J_2}}\}.
\end{equation}
 For every $B_0>0$, by the zero-free region estimate in the form used in \cite[Lemma~4.3]{MatomakiTeravainenMobius}, we have
\begin{equation}\label{P1}
	\sup_{T_0\leq|t|\leq T_1}|P_1(1/2+it)|\ll_{B_0} P_1^{1/2}\log^{-B_0} X.
\end{equation}

The Cauchy--Schwarz inequality, followed by the mean-value theorem for Dirichlet polynomials  (see e.g.~\cite[Theorem 9.1]{IwaniecKowalski}),~\eqref{eq:case2-lengths-exceed-T}, and~\eqref{eq:product-dyadic-lengths} yields
\begin{align*}
 &\int_{T_0}^{T_1}
 |P_1(1/2+it)A_1(1/2+it)A_2(1/2+it)|\,\dd t\\
 &\ll_{B_0} P_1^{1/2} \log^{-B_0} X\left(\int_{T_0}^{T_1}|A_1(1/2+it)|^2\,\dd t
 \right)^{1/2}\left(\int_{T_0}^{T_1}|A_2(1/2+it)|^2\,\dd t\right)^{1/2}\\
  &\ll P_1^{1/2}\log^{-B_0} X\left(T_1 + X^{\beta_{J_1}}\right)^{1/2}\left(T_1 + X^{\beta_{J_2}}\right)^{1/2} \log^{O(1)} X\\
 &\ll P_1^{1/2}(X^{\beta_{J_1}+\beta_{J_2}})^{1/2}\log^{-B_0+O(1)} X \ll \frac{X^{1/2}}{\log^{B_0-O(1)} X}
\end{align*}
Choosing $B_0$ sufficiently large therefore gives~\eqref{eq:perron-target-section2}.

\medskip\noindent\textbf{Case 2.2: Lemma~\ref{lem:combHBI}(iii) holds.}
Let $\{1,\dotsc,2K\}=\{j_0\}\sqcup J_1\sqcup J_2$ be the partition in the lemma, and set $\beta_{J_r}=\sum_{j\in J_r}\beta_j$. Then
\[
 S-2\tau<\beta_{j_0},\beta_{J_1},\beta_{J_2}\leq\tau.
\]
Moreover,
\[
 S-2\tau=\frac3{35}+\frac\eps2-\rho+O(1/\log X)>\frac1K
\]
for sufficiently large $X$. Hence $j_0\leq K-1$, so $N_{j_0}(s)$ is an interval-indicator factor. Absorb $P_1(s)$ into either of the other two blocks and subdivide the two product supports into $O_K(1)$ dyadic pieces. For a fixed piece, write the product as $L(s)M(s)N(s)$, with $L(s)=N_{j_0}(s)$, let $L_0,M_0,N_0$ be the lengths of the polynomials, and set $\widetilde X=L_0M_0N_0\asymp_KX$. Writing
\[
  L_0 = \widetilde{X}^{\gamma_1}, \quad   M_0 =\widetilde{X}^{\gamma_2}, \quad   N_0 = \widetilde{X}^{\gamma_3},
  \]
we have $\gamma_1, \gamma_2, \gamma_3 \in [\eps/8,16/35-\eps/8]$ with $\gamma_1 + \gamma_2 + \gamma_3 = 1$. Furthermore, for sufficiently large $X$,
\[
T_1\leq\widetilde X^{16/35-\eps/8}, \qquad T_0\geq\exp(\log^{1/4}\widetilde X).
\]
Lemma~\ref{lem:hbi-trilinear-estimates}, applied with $\widetilde X$ in place of $X$ and $\eps/8$ in place of $\eps$, now gives \eqref{eq:perron-target-section2}.
\end{proof}

\section{Computational lemmas} \label{sec:Comput}
In this section we prove Lemmas~\ref{le:16/35comput} and~\ref{le:2/5comput}. Note that each of these is a claim about positivity of a piecewise linear function (of the shape $P_1(t)+P_2(t)+P_3(t)-t-\varepsilon^{10}$). A brute force approach would be to check the inequality at all the possible break-points of these functions. Here we provide different proofs, based on looking at slopes of the piecewise linear functions. 

Before turning to the proofs of Lemmas~\ref{le:16/35comput} and~\ref{le:2/5comput}, for $\beta \geq \varepsilon$ and $t \in [0, \tau]$, we define and study the function
\begin{align} \label{eq:Rbetadef}
  \begin{aligned}
    R_{\beta, \tau}(t) &:= \beta \inf\{\nu \geq 0 \colon \mathcal{K}(\beta, \nu; \tau) \geq t\} \\
    &= \max\left\{0, 
    \min\Bigl\{\frac{t}{2}, 
    \max\Bigl\{\frac{t-\tau+2\beta}{6}, 
    \min\Bigl\{\frac{t-\tau+\frac{8}{5}\beta}{4}, \frac{t-\frac{\tau}{2}+\beta}{4}\Bigr\}, \frac{t-\tau+\beta}{2}\Bigr\}\Bigr\}\right\}.\end{aligned} \end{align}
Splitting into cases, we obtain the following explicit formulas.
  If $\varepsilon \leq \beta \leq \tau/2$, then
\begin{equation} \label{eq:Rbtaulow}
R_{\beta, \tau}(t)=
\begin{cases}
0, &\text{if $0\leq t\leq\tau-2\beta$,}\\
\frac{t-\tau+2\beta}{6}, &\text{if $\tau-2\beta\leq t\leq\tau-\frac{4}{5}\beta$,} \\
\frac{t-\tau+\frac{8}{5}\beta}{4}, &\text{if $\tau-\frac{4}{5}\beta\leq t\leq\tau-\frac25\beta$},\\
\frac{t-\tau+\beta}{2}, &\text{if $\tau-\frac{2}{5}\beta\leq t\leq\tau$}.
\end{cases}
\end{equation}
If $\tau/2\leq \beta\leq 5\tau/6$, then
\begin{align} \label{eq:Rbtaulowmid}
R_{\beta, \tau}(t)=
\begin{cases}
\frac{t}{2}, &\text{if $0\leq t\leq \beta-\frac\tau2$,}\\
\frac{t-\tau+2\beta}{6}, &\text{if $\beta-\frac\tau2\leq t\leq\tau-\frac45\beta$}, \\
\frac{t-\tau+\frac{8}{5}\beta}{4}, & \text{if $\tau-\frac{4}{5}\beta\leq t\leq\tau-\frac25\beta$},\\
\frac{t-\tau+\beta}{2}, & \text{if $\tau-\frac25\beta\leq t\leq\tau$}.
\end{cases}
\end{align}
If $5\tau/6 \leq \beta\leq\tau$, then
\begin{align}  \label{eq:Rbtauupmid}
R_{\beta, \tau}(t)=
\begin{cases}
\frac t2, &\text{if $0\leq t\leq \beta-\frac\tau2$,} \\
\frac{t-\frac{\tau}{2}+\beta}{4}, & \text{if $\beta-\frac\tau2\leq t\leq\frac{3\tau}{2}-\beta$}, \\
\frac{t-\tau+\beta}{2}, & \text{if $\frac{3\tau}{2}-\beta\leq t\leq\tau$}.
\end{cases}
\end{align}
Finally, if $\beta > \tau$, then
\begin{equation} \label{eq:Rbtauup}
  R_{\beta, \tau}(t) = \frac{t}{2}.
\end{equation}

\begin{proof}[Proof of Lemma~\ref{le:16/35comput}] 
First of all, we observe that $H_{j,\tau}(\nu)$ is nondecreasing in $\tau\in [\eps, \tau_1]$. As a consequence, $\{ \nu: H_{j,\tau}(\nu) \geq t\}\subseteq \{ \nu: H_{j,\tau_1} (\nu) \geq t\}$. From the definition of $P_{j,\tau}$ in Lemma~\ref{le:16/35comput} we thus have for $\tau\in [\eps, \tau_1]$ and $0\leq t \leq \tau$ that
\[
P_{j,\tau}(t) \geq P_{j,\tau_1}(t).
\] 
For convenience, set $P_j(t) =P_{j,\tau_1}(t)$ and $R_\beta(t) =R_{\beta,\tau_1}(t)$. It then suffices to prove that 
\begin{align} \label{eq:16/35-reduc-tau1}
F(t) := P_1(t) + P_2(t) + P_3(t) -t > \varepsilon^{10}
\end{align}
for all $0\leq t\leq \tau_1$.

Since $\nu$ takes values in the interval $[\eps^6,\eps^{-1}]$, we can conclude the above inequality when $t$ is rather small. Indeed, for all $t \in [0, \tau_1]$ one has
  \[
    P_1(t) + P_2(t) + P_3(t) \geq(\beta_1+\beta_2+\beta_3)\varepsilon^6 = \varepsilon^6.
    \]
Thus, for $t \leq \varepsilon^6/2$, we have
\[
F(t)\geq \frac{\varepsilon^6}{2}>\varepsilon^{10}.
\]

Let us assume below that $\eps^6/2< t\leq \tau_1$. From the definition of $R_\beta(t)$, it is immediate that if $\mathcal K(\beta_2,\nu;\tau_1) \geq t$ and $\mathcal K(2\beta_2,\nu;\tau_1)\geq t$, then $\beta_2\nu \geq R_{\beta_2}(t)$ and $2\beta_2\nu \geq R_{2\beta_2}(t)$. Thus it follows from the definitions of $P_2(t)$ in Lemma~\ref{le:16/35comput} and $R_{\beta}(t)$ in~\eqref{eq:Rbetadef} that
\begin{align}\label{eq:BHPPjlower-p2}
P_2(t) \geq \max\left\{R_{\beta_2}(t), \frac{1}{2} R_{2\beta_2}(t)\right\}.	
\end{align}
Similarly,
\begin{align} \label{eq:BHPPjlower}
 P_3(t) \geq \max\left\{R_{\beta_3}(t), \frac{1}{2} R_{2\beta_3}(t)\right\}, \qquad
  P_1(t) \geq \max\left\{\frac{t-\tau_1+2\beta_1}{4}, \frac{1}{2} R_{2\beta_1}(t)\right\}.
\end{align}
We shall prove~\eqref{eq:16/35-reduc-tau1} for $t \in [\eps^6/2,\tau_1]$ using these piecewise-linear lower
bounds.

We first make a preliminary reduction. By~\eqref{eq:BHPPjlower}, \eqref{eq:BHPPjlower-p2} and \eqref{eq:Rbetadef}, we have, for $\beta_2, \beta_3 \leq \tau_1$, 
\begin{align*}
  P_1(t) + P_2(t) + P_3(t) &\geq \frac{t-\tau_1+2\beta_1}{4} + \min\left\{\frac{t}{4}, \frac{t-\tau_1+2\beta_2}{4}\right\} + \frac{t-\tau_1+\beta_3}{2} \\
  &= \min\left\{t-\frac{3}{4}\tau_1+\frac{\beta_1+\beta_3}{2}, \frac{1}{2}+t-\tau_1\right\}.
\end{align*}
The second term in the minimum is $t+3/70+\varepsilon>t+\varepsilon^{10}$. The first term is strictly greater than $t+\eps^{10}$ if
\[
  \beta_1+\beta_3 > \frac{3}{2}\tau_1+2\varepsilon^{10} \iff \beta_2 < 1-\frac{3}{2}\tau_1-2\varepsilon^{10}.
  \]
  Thus, using also the symmetry between $\beta_2$ and $\beta_3$, we may restrict ourselves to the case
  \[
    \beta_2, \beta_3 \in \left[1-\frac{3}{2}\tau_1-2\varepsilon^{10}, \tau_1\right] \subseteq [11/35 +\eps,\tau_{1}].
    \]
In particular, after interchanging $\beta_2$ and $\beta_3$ if necessary, we assume from now on that
\[
  \frac{11}{35} +\eps \leq \beta_2\leq \beta_3 \leq \tau_1.
\]
Since $\eps \in (0, 1/100)$, we have $11/35+\eps \geq \tau_1/2$, so $\beta_2,\beta_3\geq \tau_1/2$ in this range. When $\beta \geq \tau_1 /2$ let
 \begin{align}\label{eq:end-slope}
 p(\beta) =\beta -\tau_1/2	
 \end{align}
denote the first breakpoint of~\eqref{eq:Rbtaulowmid} and~\eqref{eq:Rbtauupmid}, thus the end of the initial slope-$1/2$ segment. Similarly let
\begin{align}\label{eq:return-slope}
 r(\beta) = 
 \begin{cases}
   \tau_1 -2\beta/5, \qquad &\text{ if } \beta \leq 5\tau_1/6;\\
 	3\tau_1/2 -\beta, &\text{ if } 5\tau_1/6 \leq \beta \leq \tau_1,
 \end{cases}	
\end{align}
denote the last breakpoint of~\eqref{eq:Rbtaulow},~\eqref{eq:Rbtaulowmid} and~\eqref{eq:Rbtauupmid}, thus the beginning of the final slope-$1/2$ segment. Then $r(\beta)$ is a continuous and non-increasing function in $\beta$. Since $R_{\beta_2}(t) =R_{\beta_3} (t) =t/2$ whenever $0\leq t \leq p(\beta_2)\leq p(\beta_3)$, we then have
\[
F(t) \geq P_1(t) \geq \beta_1 \eps^6 \geq \eps^7>\eps^{10},
\]
as desired.

It therefore remains to consider the case $p(\beta_2) \leq t \leq \tau_1$. We divide the rest of the argument into two cases depending on whether $\beta_1\geq\tau_1/2$ or $\beta_1 < \tau_1/2$. 

\textbf{Case 1: $\beta_1\geq \tau_1/2$}. 
Define
\[
  F_{+}(t) = R_{\beta_2}(t) + R_{\beta_3}(t) + \frac{t-\tau_1+2\beta_1}{4} -t.
\]
From~\eqref{eq:BHPPjlower-p2}--\eqref{eq:BHPPjlower}, it suffices to show that $F_{+}(t) > \eps^{10}$ whenever $t \in [p(\beta_2),\tau_1]$. We shall show that $F_+(t)$ attains its minimum at $t = r(\beta_2)$.

When $t \in [p(\beta_2), r(\beta_2)]$, the slope of $R_{\beta_2}$ is at most $1/4$, and the slope of $R_{\beta_3}$ is at most $1/2$. Therefore the slope of $F_+(t)$ is at most $0$.
On the other hand, when $t\geq r(\beta_2) \geq r(\beta_3)$ both $R_{\beta_2}$ and $R_{\beta_3}$ are on their final branches (noting $r(\beta)$ is non-increasing), hence both of them have slope $1/2$. Thus in this range $F_+$ has slope
\[
1/2+1/2+1/4-1=1/4 \geq 0.
\]
Consequently, $F_{+}$ attains its minimum at $r(\beta_2)$, and the minimum is
\begin{align*}
  F_+(r(\beta_2)) &= \frac{r(\beta_2)-\tau_1+\beta_2}{2} + \frac{r(\beta_2)-\tau_1+\beta_3}{2}+\frac{r(\beta_2)-\tau_1+2\beta_1}{4}-r(\beta_2) \\
  &= \frac{r(\beta_2)}{4}-\frac{5}{4}\tau_1+\frac{1}{2}.
\end{align*}
If $\beta_2\leq 5\tau_1/6$, then $r(\beta_2) =\tau_1-2\beta_2/5$ and therefore,
\[
F_{+} (r(\beta_2)) =  -\tau_1 -\frac{\beta_2}{10} + \frac12
\geq \frac12-\frac{13\tau_1}{12}
>\eps^{10},
\]
If instead $\beta_2> 5\tau_1/6$, then $r(\beta_{2})=\frac{3\tau_{1}}{2}-\beta_{2}$. Using also $\beta_2\leq(1-\beta_1)/2\leq1/2-\tau_1/4$, we obtain
\[
F_{+} (r(\beta_2)) = \frac12 -\frac{7\tau_1}{8} - \frac{\beta_2}{4}
\geq\frac38-\frac{13\tau_1}{16}
>\eps^{10}.
\]
This proves~\eqref{eq:16/35-reduc-tau1} in Case~1.

\textbf{Case 2: $\beta_1 < \tau_1/2$}.
Define
\[
  F_{-} (t) = R_{\beta_2}(t) + R_{\beta_3}(t) + R_{2\beta_1}(t)/2 -t.
\]
For $t \in [p(\beta_2),r(\beta_2)]$, the slopes of $R_{\beta_2}$, $R_{\beta_3}$, and $R_{2\beta_1}/2$ are at most $1/4$, $1/2$, and $1/4$, respectively, so the slope of $F_-$ is $\leq 0$ in this interval. For $t \in [r(\beta_2),\tau_1]$, the first two slopes equal $1/2$ and the slope of $R_{2\beta_1}$ is non-negative, so the slope of $F_-$ is $\geq 0$ in this interval. Therefore $F_-$ attains its minimum at $r(\beta_2)$, and the minimum is
\begin{align*}
  F_{-} (r(\beta_2)) &= \frac{r(\beta_2)-\tau_1+\beta_2}{2} + \frac{r(\beta_2)-\tau_1+\beta_3}{2}+\frac{R_{2\beta_1} (r(\beta_2))}{2}-r(\beta_2)\\
  &= \frac12 -\frac{\beta_1}{2}-\tau_1 +\frac{R_{2\beta_1} (r(\beta_2))}{2}.
\end{align*}

Thus it suffices to show that
\begin{align}\label{eq:claim-beta1}
 R_{2\beta_1} (r(\beta_2)) > 2\tau_1+\beta_1-1+2\varepsilon^{10}
\end{align}
Consider this for a fixed $\beta_1 < \tau_1/2$. Now $R_{2\beta_1}(t)$ is increasing in $t$ while $r(\beta_2)$ is decreasing in $\beta_2$. Thus $R_{2\beta_1}(r(\beta_2))$ is minimal when $\beta_2$ is maximal. Since $\beta_2 \leq \beta_3$ and $\beta_1 + \beta_2 + \beta_3 = 1$, we have $\beta_2 \leq (1-\beta_1)/2$. Thus it suffices to show~\eqref{eq:claim-beta1} for $\beta_2 = (1-\beta_1)/2$. For $\beta_1 < \tau_1/2$, we have $(1-\beta_1)/2 \geq 5\tau_1/6$. Thus, by definition of $r(\beta)$ it suffices to show that
\begin{equation} \label{eq:R2bet1bound}
 H(\beta_1) := R_{2\beta_1}\left(\frac{3\tau_1}{2}+\frac{\beta_1-1}{2}\right) - (2\tau_1+\beta_1-1+2\varepsilon^{10}) > 0.
\end{equation} 
Looking carefully at~\eqref{eq:Rbtaulow},~\eqref{eq:Rbtaulowmid},~\eqref{eq:Rbtauupmid},~\eqref{eq:Rbtauup}, we see that the slope of $H(\beta_1)$ is $\leq 0$ when the argument of $R_{2\beta_1}$ is $< r(2\beta_1)$ and $\geq 0$ when the argument of $R_{2\beta_1}$ is $\geq r(2 \beta_1)$. Thus it suffices to consider the case
  \begin{equation} \label{eq:r2beta1eq}
    \frac{3\tau_1}{2}+\frac{\beta_1-1}{2} = r(2\beta_1)
  \end{equation}
  
    If we had $2 \beta_1 \leq 5\tau_1/6$, it would follow that
  \[
    r(2\beta_1) = \tau_1 - \frac{4\beta_1}{5}.
  \]
In this case
\[
  \frac{3\tau_1}{2}+\frac{\beta_1-1}{2} - r(2\beta_1) = \frac{\tau_1}{2} -\frac{1}{2} +\frac{13}{10}\beta_1 \leq \frac{25}{24}\tau_1 - \frac{1}{2} < 0.
  \]
Thus~\eqref{eq:r2beta1eq} does not have a solution with $2\beta_1 \leq 5\tau_1/6$ and $H(\beta_1)$ is non-increasing in this range.

For $2 \beta_1 > 5\tau_1/6$, by the definition of $r(\beta)$,
    \[
    \frac{3\tau_1}{2}+\frac{\beta_1-1}{2} = \frac{3\tau_1}{2} - 2\beta_1 \iff \beta_1 = \frac{1}{5}
  \]
  and
  \[
    R_{2/5}\left(\frac{3\tau_1}{2}+\frac{\frac{1}{5}-1}{2}\right) = \frac{\frac{3\tau_1}{2}-\frac{2}{5}-\tau_1+\frac{2}{5}}{2} = \frac{\tau_1}{4}. 
  \]
  Thus
  \[
    H(1/5) = \frac{\tau_1}{4}-(2\tau_1 + \frac{1}{5} -1 + 2\varepsilon^{10}) = -\frac{7\tau_1}{4} + \frac{4}{5} -2\varepsilon^{10}.
  \]
  Recalling $\tau_1 = 16/35-\varepsilon$, we see that $H(1/5) = \frac{7}{4}\varepsilon-2\varepsilon^{10}> 0$ and the claim follows.
  \end{proof}

\begin{proof}[Proof of Lemma~\ref{le:2/5comput}]

  Set $\tau^*=2/5-\eps$ and, for convenience, set $P_\beta(t) =P_{\beta,\tau^\ast}(t)$ and $R_\beta(t) =R_{\beta,\tau^\ast}(t)$. Arguing as in the beginning of the proof of Lemma~\ref{le:16/35comput}, it suffices to show that
  \begin{align} \label{eq:3/5-reduc-tau1}
F(t) := P_{\beta_1}(t) + P_{\beta_2}(t) + P_{\beta_3}(t) -t > \varepsilon^{10}
\end{align}
for all $t\in [\varepsilon^6/2, \tau^\ast]$. Note that, for all $\beta \in \{\beta_1, \beta_2, \beta_3\}$,
\begin{equation} \label{eq:Pbetatlow3/5}
P_\beta(t) \geq \max \Big\{R_{\beta} (t), \frac{R_{2\beta} (t)}{2}\Big\}.
\end{equation}
We may assume that $\beta_2\leq\beta_3$. Since
$\beta_2\geq1/5$ and $\beta_1\geq2/5$, we have
$\beta_1,2\beta_2\geq\tau^*$. We therefore deduce from~\eqref{eq:Pbetatlow3/5} and the definition
of $R_\beta(t)$ that 
\[
P_{\beta_1} (t) \geq R_{\beta_1}(t)=\frac{t}{2} \qquad \text{ and }\qquad P_{\beta_2}(t) \geq \frac{R_{2\beta_2} (t)}{2}=\frac{t}{4}.
\]
Therefore,
\[
P_{\beta_1}(t) + P_{\beta_2} (t) + P_{\beta_3} (t) -t \geq P_{\beta_3} (t) -\frac{t}{4}\geq R_{\beta_3}(t)-\frac{t}{4}.
\]
The remaining task is to prove that for all $t\in [\eps^6/2, \tau^*]$ we have
\begin{align} \label{eq:function-T}
R_{\beta_3}(t)-\frac{t}{4} > \eps^{10}.
\end{align}
First of all, we notice that
\begin{equation} \label{eq:beta3simplow}
  \beta_3\geq (1-\beta_1)/2 \geq 1/4\geq \tau^*/2.
\end{equation}
If $R_{\beta_3}(t)=t/2$, then~\eqref{eq:function-T} is immediate. In particular we can concentrate on the range $\beta_3 \in [1/4, \tau^\ast].$ Looking at the slopes of $R_{\beta}(t)$ in~\eqref{eq:Rbtaulowmid} and~\eqref{eq:Rbtauupmid}, we see that the left-hand side of~\eqref{eq:function-T} is minimal when $t = r(\beta_3)$ with
\begin{align}\label{eq:return-slope2/5}
 r(\beta) = 
 \begin{cases}
   \tau^\ast -2\beta/5, \qquad &\text{ if } \beta \leq 5\tau^\ast/6;\\
 	3\tau^\ast/2 -\beta, &\text{ if } 5\tau^\ast/6 \leq \beta \leq \tau^\ast,
 \end{cases}	
\end{align}
and
\[
  R_{\beta_3}(r(\beta_3))-\frac{r(\beta_3)}{4} = \frac{r(\beta_3)-\tau^\ast+\beta_3}{2}-\frac{r(\beta_3)}{4} = \frac{r(\beta_3)-2\tau^\ast+2\beta_3}{4}.
  \]
Now, for $\beta_3 \in [5\tau^\ast/6, \tau^\ast]$, we obtain
\[
  R_{\beta_3}(r(\beta_3))-\frac{r(\beta_3)}{4} = \frac{\beta_3 -\tau^\ast/2}{4} \geq \frac{\tau^\ast}{12} > \varepsilon^{10}.
\]
and for $\beta_3 \in [1/4, 5\tau^\ast/6]$, we obtain
\[
R_{\beta_3}(r(\beta_3))-\frac{r(\beta_3)}{4} =\frac{-\tau^* +8\beta_3/5}{4} \geq \frac{\eps}{4} >\eps^{10},
\]
which completes the proof.
\end{proof}

\section{Multiplicative functions and almost primes in short intervals} \label{sec:Mult}
In this section we shall briefly indicate how to prove Theorems \ref{thm1.3}, \ref{thm1.4} and \ref{thm1.5}. In case $H \in [X^{0.98}, X]$ we split the interval $(X, X+H]$ into intervals of length $X^{0.9}$ and apply the result with this $H$ for each subinterval. Hence we can assume that $H \in [X^{19/35+\varepsilon}, X^{0.98}]$.

We may assume that $\eps\in(0,1/100)$. The discussion in~\cite[Section~5.1]{MatomakiTeravainenMobius} reduces the proofs of Theorems~\ref{thm1.3} and~\ref{thm1.4} to comparison estimates of the form~\eqref{eq:ramare-component-comparison}. The resulting dyadic components contain a bounded number of factors with divisor-bounded coefficients, allowing fixed powers of
$\log X$. Every factor of length greater than $(2X)^{1/20}$ is
an interval sum weighted by a Dirichlet character of bounded
modulus, possibly with a logarithmic weight.

The proof of Proposition~\ref{prop:ramare-component-comparison}
extends to these components, since the extra short factors satisfy
the same divisor moment bounds, and $S-2\tau>1/20$ guarantees
that the singleton in case \textup{(iii)} is a
character-weighted interval factor. Case \textup{(i)} uses periodicity and partial summation, case \textup{(ii)} uses the character-weighted
prime polynomial estimate, and case \textup{(iii)} uses
Lemma~\ref{lem:hbi-trilinear-estimates} with a character weight. This yields Theorems~\ref{thm1.3} and~\ref{thm1.4}
whenever $X^{19/35+\eps}\leq H\leq Y$.

We now turn our attention to Theorem \ref{thm1.5}. Take $k=100$ and write $P=\exp((\log\log X)^{2})$ and $Q= X^{\varepsilon}$. Note that, as will become transparent later, we deliberately chose a different $k$ from the one taken in ~\cite[Section 5.2]{MatomakiTeravainenMobius}. Consider
$$P(s)=\sum_{P_{1}<p\leq \min \{2P_{1},Q\}}p^{-s},\ \ \ \ \ \ \ N_{i}(s)=\sum_{N_{i}<n\leq 2N_{i}} a_{i}(n)n^{-s},$$ where $P_{1}N_{1}\ldots N_{2k}\asymp X$, $P_1\in [P,Q]$, $N_i\geq 1/2$, and 
\[
a_i(n) \equiv
\begin{cases}
1 \ \text{ or } \log n \ \text{ or } 1_{n=1}, & \text{if } i \leq k, \\[4pt]
\mu(n)1_{n \leq (2X)^{1/k}}\  \text{ or } \ 1_{n=1}, & \text{if } k+1 \leq i \leq 2k.
\end{cases}
\]
 The discussion in ~\cite[Section 5.2]{MatomakiTeravainenMobius} leading to equation (5.7) reduces matters to showing that 
\begin{equation*}
\int_{T_{0}}^{T_{1}}\Big\lvert P\Big(\frac{1}{2}+it\Big) N_{1}\Big(\frac{1}{2}+it\Big) \ldots N_{2k}\Big(\frac{1}{2}+it\Big)\Big\rvert dt\ll_{A} \frac{X^{1/2}}{\log^A X},
\end{equation*}
with
\[
T_0 := X^{0.01}, \qquad T_1 := \frac{X^{1+\eps/10}}{H}.
\]
As was done in ~\cite{MatomakiTeravainenMobius}, we split the interval $[T_{0},T_{1}]$ into two parts, namely
$$\mathcal{T}_{1}=\{t\in [T_{0},T_{1}]: \lvert P(1/2+it)\rvert>P_{1}^{1/2} \log^{-10A} X\}$$ and its complement $\mathcal{T}_{2}=[T_{0},T_{1}]\setminus \mathcal{T}_{1}$. In order to estimate the contribution coming from $\mathcal{T}_{2}$ we follow the same procedure that was taken in the course of the proof of Proposition \ref{prop:ramare-component-comparison} to deduce ~\eqref{eq:perron-target-section2}, the only difference being that the bound $\lvert P(1/2+it)\rvert\leq P_{1}^{1/2}\log^{-10A} X$ stemming from the definition of $\mathcal{T}_{2}$ should replace the estimate ~\eqref{P1} utilised there.

For the integral \begin{equation}\label{PN1}
\int_{\mathcal{T}_{1}}\Big\lvert P\Big(\frac{1}{2}+it\Big) N_{1}\Big(\frac{1}{2}+it\Big) \ldots N_{2k}\Big(\frac{1}{2}+it\Big)\Big\rvert dt\ll_{A} \frac{X^{1/2}}{\log^A X},
\end{equation} 
we consider two cases. Suppose first that $N_{i}\leq (2X)^{1/k}$ for all $i\leq 2k$. Then, recalling that $k=100$ one may make a partition $\{1,\ldots 2k\}=\{j_{0}\}\sqcup J_{1}\sqcup J_{2}$ with $M_{1}:=\prod_{j\in J_{1}}N_{j}\in [X^{0.48},X^{1/2}]$, and $M_{2}:=\prod_{j\in J_{2}}N_{j}\in [X^{0.48},X^{0.52}]$ and $N_{j_{0}}\geq X^{1/1000}$. One then follows the procedure in the proof of Case 2.1 of Proposition \ref{prop:ramare-component-comparison} to get ~\eqref{PN1} by employing the pointwise bound $N_{j_{0}}(1/2+it)\ll N_{j_{0}}^{1/2}\log^{-A} X$ stemming from \cite[Lemma~3.9]{MSTT} and applying Cauchy--Schwarz to the Dirichlet polynomials $\prod_{j\in J_{1}}N_{j}(1/2+it)$ and $\prod_{j\in J_{2}}N_{j}(1/2+it)$. It is worth noting that in order to get the desired estimate, one should have
$$\frac{P_1^{1/2} N_{j_{0}}^{1/2}}{\log^A X}(T_{1}+M_{1})^{1/2}(T_{1}+M_{2})^{1/2}\ll \frac{X^{1/2}}{\log^A X}.$$
This holds in particular when $M_1, M_2 \geq T_1$ which is the case for us.

If instead there is some $j_{0}\leq 2k$ for which $N_{j_{0}}>(2X)^{1/k}$ then either $a_{j_{0}}(n)=1$ or $a_{j_{0}}(n)=\log n.$ Therefore, one can combine Weyl's method to estimate $N_{j_{0}}(1/2+it)$ and the bound $\lvert \mathcal{T}_{1}\rvert\ll X^{o(1)}$ to derive~\eqref{PN1} (see ~\cite[Section 5.2]{MatomakiTeravainenMobius} for further details).
\bibliographystyle{plain}
\bibliography{refs}

@article{MSTT,
  author       = {Matom{\"a}ki, K. and Shao, X. and Tao, T. and Ter{\"a}v{\"a}inen, J.},
  title        = {Higher uniformity of arithmetic functions in short intervals {I}. {All intervals}},
  journal      = {Forum Math. Pi},
  volume       = {11},
  pages        = {e29},
  year         = {2023},
  doi          = {10.1017/fmp.2023.28},
  eprint       = {2204.03754},
  archivePrefix= {arXiv},
  primaryClass = {math.NT},
  display-string = {(2023) Matomaki, K. and Shao, X. and Tao, T. and Teravainen, J. - Higher uniformity of arithmetic functions in short intervals I. All intervals}
}

@article{PW,
title={Discorrelation of the {M}\"obius function with linear phases in short intervals},
author={Pliego, J. and Wang, M.},
JOURNAL ={in preparation},
}

@book{IwaniecKowalski,
  author       = {Iwaniec, H. and Kowalski, E.},
  title        = {Analytic Number Theory},
  series       = {American Mathematical Society Colloquium Publications},
  volume       = {53},
  publisher    = {American Mathematical Society},
  year         = {2004},
  display-string = {(2004) Iwaniec, H. and Kowalski, E. - Analytic Number Theory}
}

@book{HarmanBook,
  author       = {Harman, G.},
  title        = {Prime-Detecting Sieves},
  series       = {London Mathematical Society Monographs},
  number       = {33},
  publisher    = {Princeton University Press},
  year         = {2007},
  display-string = {(2007) Harman, G. - Prime-Detecting Sieves}
}

@article {H-B-o1,
    AUTHOR = {Heath-Brown, D. R.},
     TITLE = {The number of primes in a short interval},
   JOURNAL = {J. Reine Angew. Math.},
  FJOURNAL = {Journal f\"ur die Reine und Angewandte Mathematik. [Crelle's
              Journal]},
    VOLUME = {389},
      YEAR = {1988},
     PAGES = {22--63},
      ISSN = {0075-4102,1435-5345},
   MRCLASS = {11N05 (11N35)},
  MRNUMBER = {953665},
MRREVIEWER = {R.\ C.\ Vaughan},
       DOI = {10.1515/crll.1988.389.22},
       URL = {https://doi.org/10.1515/crll.1988.389.22},
  display-string = {(1988) Heath-Brown, D. R. - The number of primes in a short interval},
}

@article{MatomakiTeravainenMobius,
  author       = {Matom{\"a}ki, K. and Ter{\"a}v{\"a}inen, J.},
  title        = {On the {M}{\"o}bius function in all short intervals},
  journal      = {J. Eur. Math. Soc. (JEMS)},
  volume       = {25},
  number       = {4},
  pages        = {1207--1225},
  year         = {2023},
  doi          = {10.4171/JEMS/1205},
  eprint       = {1911.09076},
  archivePrefix= {arXiv},
  primaryClass = {math.NT},
  display-string = {(2023) Matomaki, K. and Teravainen, J. - On the Mobius function in all short intervals}
}

@ARTICLE{CGCL,
       author = {{Chen}, B. and {Gupta}, V. and {Li}, Y. C.},
        title = "{Large Value Estimates for Dirichlet Polynomials with Characters and Zero Density of Dirichlet $L$-Functions}",
      journal = {arXiv e-prints},
         year = 2025,
        month = jul,
          eid = {arXiv:2507.08296},
        pages = {arXiv:2507.08296},
          doi = {10.48550/arXiv.2507.08296},
archivePrefix = {arXiv},
       eprint = {2507.08296},
 primaryClass = {math.NT},
       adsurl = {https://ui.adsabs.harvard.edu/abs/2025arXiv250708296C}
}

@incollection{BakerHarmanPintz1997,
  author       = {Baker, R. C. and Harman, G. and Pintz, J.},
  title        = {The exceptional set for {G}oldbach's problem in short intervals},
  booktitle    = {Sieve Methods, Exponential Sums, and their Applications in Number Theory},
  editor       = {Greaves, G. R. H. and Harman, G. and Huxley, M. N.},
  series       = {London Mathematical Society Lecture Note Series},
  volume       = {237},
  pages        = {1--54},
  publisher    = {Cambridge University Press},
  address      = {Cambridge},
  year         = {1997},
  doi          = {10.1017/CBO9780511526091.004},
  display-string = {(1997) Baker, R. C. and Harman, G. and Pintz, J. - The exceptional set for Goldbach's problem in short intervals}
}

@article {Ford02,
    AUTHOR = {Ford, K.},
     TITLE = {Vinogradov's integral and bounds for the {R}iemann zeta
              function},
   JOURNAL = {Proc. London Math. Soc. (3)},
  FJOURNAL = {Proceedings of the London Mathematical Society. Third Series},
    VOLUME = {85},
      YEAR = {2002},
    NUMBER = {3},
     PAGES = {565--633},
      ISSN = {0024-6115,1460-244X},
   MRCLASS = {11L15 (11D72 11M06 11M26 11M35 11Y35)},
  MRNUMBER = {1936814},
MRREVIEWER = {D.\ R.\ Heath-Brown},
       DOI = {10.1112/S0024611502013655},
       URL = {https://doi.org/10.1112/S0024611502013655},
  display-string = {(2002) Ford, Kevin - Vinogradov's integral and bounds for the Riemann zeta
              function},
}

@ARTICLE{MRII,
       author = {{Matom{\"a}ki}, K. and {Radziwi{\l}{\l}}, M.},
        title = "{Multiplicative functions in short intervals II}",
      journal = {arXiv e-prints},
         year = 2020,
        month = jul,
          eid = {arXiv:2007.04290},
        pages = {arXiv:2007.04290},
          doi = {10.48550/arXiv.2007.04290},
archivePrefix = {arXiv},
       eprint = {2007.04290},
 primaryClass = {math.NT},
       adsurl = {https://ui.adsabs.harvard.edu/abs/2020arXiv200704290M}
}

@article{GuthMaynard,
  author       = {Guth, L. and Maynard, J.},
  title        = {New large value estimates for {D}irichlet polynomials},
  journal      = {Ann. of Math. (2)},
  volume       = {203},
  number       = {2},
  pages        = {623--675},
  year         = {2026},
  doi          = {10.4007/annals.2026.203.2.6},
  eprint       = {2405.20552},
  archivePrefix= {arXiv},
  primaryClass = {math.NT},
  display-string = {(2026) Guth, L. and Maynard, J. - New large value estimates for Dirichlet polynomials}
}

@article{HeathBrownIwaniec1979,
  author       = {Heath-Brown, D. R. and Iwaniec, H.},
  title        = {On the difference between consecutive primes},
  journal      = {Invent. Math.},
  volume       = {55},
  pages        = {49--69},
  year         = {1979},
  doi          = {10.1007/BF01390094},
  display-string = {(1979) Heath-Brown, D. R. and Iwaniec, H. - On the difference between consecutive primes}
}

@article{Shiu,
  author  = {Shiu, P.},
  title   = {A {B}run--{T}itchmarsh theorem for multiplicative functions},
  journal = {J. Reine Angew. Math.},
  volume  = {313},
  pages   = {161--170},
  year    = {1980},
  display-string = {(1980) Shiu, P. - A Brun--Titchmarsh theorem for multiplicative functions}
}

@book {Tenenbaum2015,
	AUTHOR = {Tenenbaum, G\'erald},
	TITLE = {Introduction to analytic and probabilistic number theory},
	SERIES = {Graduate Studies in Mathematics},
	VOLUME = {163},
	EDITION = {Third},
	PUBLISHER = {American Mathematical Society, Providence, RI},
	YEAR = {2015},
	PAGES = {xxiv+629},
	ISBN = {978-0-8218-9854-3},
	MRCLASS = {11-02 (11Kxx 11Mxx 11Nxx)},
	MRNUMBER = {3363366},
	DOI = {10.1090/gsm/163},
	URL = {https://doi.org/10.1090/gsm/163},
}

@ARTICLE{Teravainen-Quantitative,
       author = {{Ter{\"a}v{\"a}inen}, J.},
        title = "{Quantitative Gowers uniformity of the primes in intervals of length $X^{5/8+\varepsilon}$}",
      journal = {arXiv e-prints},
         year = 2026,
        month = oct,
          eid = {arXiv:2610.03707},
        pages = {arXiv:2610.03707},
archivePrefix = {arXiv},
       eprint = {2610.03707},
 primaryClass = {math.NT},
       adsurl = {https://ui.adsabs.harvard.edu/abs/2026arXiv261003707T}
}

\end{document}